\documentclass[reqno]{amsart}
\usepackage{nomencl}
\makenomenclature
\usepackage[english]{babel}
\usepackage{tikz-cd}
\usepackage{amssymb,latexsym,amscd,amsmath,amsthm,manfnt,tikz}
\usepackage{mathdots}
\usepackage{xcolor}
\usepackage{hyperref}
\hypersetup{colorlinks=true,linkcolor=blue, citecolor=blue}
\usepackage[widespace]{fourier}
\usepackage{frcursive}
\usepackage[pagewise]{lineno}
\usepackage[widespace]{fourier}

\usepackage[matrix,arrow]{xy}
\begin{document}
\input{amssym.def}
\input{xy}

\xyoption{all}
\theoremstyle{remark}
\theoremstyle{plain}
\newtheorem{exam}{Example}
\pagestyle{plain}
\newtheorem*{theorem*}{Theorem}
\setcounter{tocdepth}{1}
\numberwithin{equation}{subsection} 
\newtheorem{guess}{theorem}[section]
\newtheorem{thm}[guess]{Theorem}
\newtheorem{lem}[guess]{Lemma}
\newtheorem{prop}[guess]{Proposition}
\newtheorem{Cor}[guess]{Corollary}
\newtheorem{defi}[guess]{Definition}
\newcommand{\ds}{\displaystyle}
\theoremstyle{definition}
\newtheorem{rem}[guess]{Remark}

\newtheorem{ex}[guess]{Example}
\newtheorem{prob}[guess]{Problem}
\newtheorem{claim}[guess]{Claim}

\newcommand{\Hfr}{\mathfrak{H}}

\newcommand{\fX}{\mathfrak{X}}
\newcommand{\gfr}{\mathfrak{g}}
\newcommand{\hfr}{\mathfrak{h}}
\newcommand{\Dfr}{\mathfrak{D}}
\newcommand{\Efr}{\mathfrak{E}}
\newcommand{\Gfr}{\mathfrak{G}}
\newcommand{\Ffr}{\mathfrak{F}}
\newcommand{\Cfr}{\mathfrak{C}}
\newcommand{\rfr}{\mathfrak{r}}
\newcommand{\lfr}{\mathfrak{l}}
\newcommand{\sfr}{\mathfrak{s}}
\newcommand{\pfr}{\mathfrak{p}}
\newcommand{\Sfr}{\mathfrak{S}}

\newcommand{\cY}{\mathcal{Y}}

\newcommand{\cV}{\mathcal{V}}
\newcommand{\cZ}{\mathcal{Z}}
\newcommand{\cU}{\mathcal{U}}
\newcommand{\cI}{\mathcal{I}}
\newcommand{\cK}{\mathcal{K}}
\newcommand{\cD}{\mathcal{D}}
\newcommand{\cQ}{\mathcal{Q}}
\newcommand{\cO}{\mathcal{O}}
\newcommand{\cC}{\mathcal{C}}
\newcommand{\cE}{\mathcal{E}}
\newcommand{\cG}{\mathcal{G}}
\newcommand{\cW}{\mathcal{W}}
\newcommand{\cB}{\mathcal{B}}
\newcommand{\cF}{\mathcal{F}}
\newcommand{\cH}{\mathcal{H}}
\newcommand{\cM}{\mathcal{M}}
\newcommand{\cA}{\mathcal{A}}
\newcommand{\cN}{\mathcal{N}}
\newcommand{\cR}{\mathcal{R}}
\newcommand{\cL}{\mathcal{L}}
\newcommand{\cT}{\mathcal{T}}
\newcommand{\cS}{\mathcal{S}}
\newcommand{\cX}{\mathcal{X}}
\newcommand{\cP}{\mathcal{P}}
\newcommand{\DD}{\mathcal{D}}
\newcommand{\KK}{\mathcal{K}}
\newcommand{\bs}{\mathbf{s}}
\newcommand{\bb}{\mathbf{b}}
\newcommand{\mf}{\mathfrak{f}}

\newcommand{\Prym}{\mathrm{Prym}}
\newcommand{\Nbd}{\mathrm{Nbd}}
\newcommand{\Frac}{\mathrm{Frac}}
\newcommand{\AbShv}{\mathrm{AbShv}}
\newcommand{\Shv}{\mathrm{Shv}}
\newcommand{\PreShv}{\mathrm{Preshv}}
\newcommand{\Sets}{\mathrm{Sets}}
\newcommand{\topgp}{\mathrm{top}}
\newcommand{\sub}{\mathrm{sub}}
\newcommand{\Sym}{\mathrm{Sym}}
\newcommand{\disc}{\mathrm{disc}}
\newcommand{\Pic}{\mathrm{Pic}}
\newcommand{\Gal}{\mathrm{Gal}}
\newcommand{\Jac}{\mathrm{Jac}}
\newcommand{\Obj}{\mathrm{Obj}}
\newcommand{\Stab}{\mathrm{Stab}}
\newcommand{\Div}{\mathrm{Div}}
\newcommand{\Grass}{\mathrm{Grass}}
\newcommand{\height}{\mathrm{height}}
\newcommand{\Bl}{\mathrm{Bl}}
\newcommand{\Ad}{\mathrm{Ad}}
\newcommand{\Inn}{\mathrm{Inn}}
\newcommand{\Out}{\mathrm{Out}}

\newcommand{\Ram}{\mathrm{Ram}}
\newcommand{\diag}{\mathrm{diag}}
\newcommand{\ram}{\mathrm{ram}}
\newcommand{\Mor}{\mathrm{Mor}}
\newcommand{\Img}{\mathrm{Img}}
\newcommand{\Spec}{\mathrm{Spec}}
\newcommand{\zero}{\mathrm{zero}}
\newcommand{\SU}{\mathrm{SU}}
\newcommand{\Ker}{\mathrm{Ker}}
\newcommand{\Trace}{\mathrm{Trace}}
\newcommand{\ad}{\mathrm{ad}}
\newcommand{\stdn}{\mathrm{stdn}}
\newcommand{\tr}{\mathrm{tr}}
\newcommand{\triv}{\mathrm{triv}}
\newcommand{\sgn}{\mathrm{sgn}}
\newcommand{\ev}{\mathrm{ev}}
\newcommand{\Tor}{\mathrm{Tor}}
\newcommand{\opp}{\mathrm{opp}}
\newcommand{\Cov}{\mathrm{Cov}}
\newcommand{\Covet}{\mathrm{Cov_{\acute{e}t}}}
\newcommand{\ob}{\mathrm{ob}}
\newcommand{\AbGps}{\mathrm{AbGps}}
\newcommand{\rank}{\mathrm{rank}}
\newcommand{\supp}{\mathrm{supp}}
\newcommand{\glue}{\mathrm{\tt glue}}

\newcommand{\lra}{\longrightarrow}
\newcommand{\Lra}{\Leftrightarrow}
\newcommand{\Ra}{\Rightarrow}
\newcommand{\hra}{\hookrightarrow}
\newcommand{\sr}{\stackrel}
\newcommand{\dra}{\dashrightarrow}
\newcommand{\ra}{\rightarrow}
\newcommand{\ol}{\overline}
\newcommand{\wh}{\widehat}
\newcommand{\loms}{\longmapsto}
\newcommand{\la}{\leftarrow}
\newcommand{\lems}{\leftmapsto}
\newcommand{\vp}{\varpi}
\newcommand{\ep}{\epsilon}
\newcommand{\La}{\Lambda}
\newcommand{\abf}{\bigtriangleup_\theta}
\newcommand{\ms}{\mapsto}
\newcommand{\evt}{\tilde{\ev}}
\newcommand{\ul}{\underline}
\newcommand{\uT}{\underline{T}}
\newcommand{\cech}{\cC^.(\fU, \uT)}
\newcommand{\bc}{{\mathbb C}}
\newcommand{\tcr}{\text{\cursive r}}
\newcommand{\bp}{{\mathbb P}}
\newcommand{\bz}{{\mathbb Z}}
\newcommand{\bq}{{\mathbb Q}}
\newcommand{\bn}{{\mathbb N}}
\newcommand{\bg}{{\mathbb G}}
\newcommand{\br}{{\mathbb R}}
\newcommand{{\bh}}{{\mathbb H}}


\newcommand{\vf}{\varphi}

\newcommand{\vt}{\vartheta}

\newcommand{\TT}{\Theta}
\newcommand{\spec}{{\rm Spec}\,}

\newcommand{\gt}{\theta}
\newcommand{\RR}{\mathbb{R}}
\newcommand{\EE}{\mathbb{E}}
\newcommand{\WW}{\mathbb{W}}
\newcommand{\FF}{\mathbb{F}}
\newcommand{\VV}{\mathbb{V}}
\newcommand{\qq}{\mathbb{q}}
\newcommand{\HH}{\mathbb{H}}
\newcommand{\PP}{\mathbb{P}}
\newcommand{\Sbb}{\mathbb{S}}
\newcommand{\ZZ}{\mathbb{Z}}

\newcommand{\QQ}{\mathbb{Q}}
\newcommand{\NN}{\mathbb{N}}
\newcommand{\UU}{\mathbb{U}}
\newcommand{\XX}{\mathbb{X}}
\newcommand{\CC}{\mathbb{C}}
\newcommand{\bX}{\mathbb{X}}
\newcommand{\bY}{\mathbb{Y}}
\newcommand{\fU}{\mathfrak{U}}

\newcommand{\tcm}{\text{\cursive c}}
\newcommand{\tcs}{\small\text{\cursive s}}
\newcommand{\tcq}{\small\text{\cursive q}}
\newcommand{\tcf}{\tiny\text{\cursive f}}
\newcommand{\tcb}{\it b}
\newcommand{\tcge}{\small\text{\cursive ge}}


\newcommand{\sma}[1]{ \begin{pmatrix}

\begin{smallmatrix} #1 \end{smallmatrix} \end{pmatrix} }

\newcommand{\sm}[1]{

\begin{smallmatrix} #1 \end{smallmatrix}  }

\newcommand{\symp}[1]{\mathrm{Sp}_{#1}(\ZZ)}
\renewcommand\qedsymbol{\tt{Q.E.D}}

\newcommand{\Ext}{\mathrm{Ext}}
\newcommand{\std}{\mathrm{std}}
\newcommand{\End}{\mathrm{End}}
\newcommand{\Mum}{\mathrm{Mum}}
\newcommand{\Hom}{\mathrm{Hom}}
\newcommand{\Ind}{\mathrm{Ind}}
\newcommand{\Id}{\mathrm{Id}}
\newcommand{\dimn}{\mathrm{\text{dim}}}
\newcommand{\pardeg}{\mathrm{pardeg}}
\newcommand{\x}{\mathrm{x}}
\newcommand{\Res}{\mathrm{Res}}
\newcommand{\Endq}{\mathrm{End}_\QQ}
\newcommand{\GL}{\mathrm{GL}}
\newcommand{\SL}{\mathrm{SL}}
\newcommand{\Cl}{\mathrm{Cl}}
\newcommand{\SO}{\mathrm{SO}}
\newcommand{\Sp}{\mathrm{Sp}}
\newcommand{\Spin}{\mathrm{Spin}}
\newcommand{\PSO}{\mathrm{PSO}}
\newcommand{\PGL}{\mathrm{PGL}}
\newcommand{\Cb}{\mathrm{Cb}}
\newcommand{\cg}{\mathrm{Cg}}
\newcommand{\Nm}{\mathrm{Nm}}
\newcommand{\Proj}{\mathrm{Proj}}
\newcommand{\im}{\mathrm{im}}
\newcommand{\Aut}{\mathrm{Aut}}
\newcommand{\coker}{\mathrm{coker}}
\def\map#1{\ \smash{\mathop{\longrightarrow}\limits^{#1}}\ }


\newcommand{\Pdon}{\Prym(\pi,\Lambda)}
\newcommand{\twinv}{{H^1(Z,\underline{T})}^W}
\newcommand{\He}{H_{\acute{e}t}}
\newcommand{\Shalf}{S_{1/2}}

\title{Bruhat-Tits group schemes over higher dimensional base-II}

\author[V. Balaji]{Vikraman Balaji}
\address{Chennai Mathematical Institute }
\email{balaji@cmi.ac.in}

\author[Y. Pandey]{Yashonidhi Pandey}
\thanks{The research for this paper was partially funded by the SERB Core Research Grant CRG/2022/000051.}
\address{ 
Indian Institute of Science Education and Research, Mohali Knowledge city, Sector 81, SAS Nagar, Manauli PO 140306, India}
\email{ ypandey@iisermohali.ac.in, yashonidhipandey@yahoo.co.uk, pandeyyashonidhi@gmail.com}

\begin{abstract}
We prove  that  split reductive BT group schemes over a higher dimensional base  are {\em affine}. Our method also gives a new construction of higher BT-group schemes more general than parahoric ones. The new ingredients are an extension of J.-K.Yu's construction in \cite{yu} to higher dimensional bases,  N\'eron-Raynaud dilatations of subgroup schemes on divisors, combined with techniques from \cite{bt2} and the structure theory developed in \cite{bp}.

\end{abstract}
\subjclass[2000]{14D23,14D20}
\keywords{Bruhat-Tits group schemes, dilatations, concave functions}
\maketitle

\small
\tableofcontents
\setcounter{tocdepth}{2} 
\normalsize

\section{Introduction} \label{intro} 

Let $K$ be a local field complete with respect to a valuation which is not necessarily discrete. Let $G$ be a  be a connected {\it quasi-split} reductive algebraic group over $K$. A bounded subgroup ${\tt Q} \subset G(K)$ is called  {\sl schematic} if there exists a smooth $\cO$-group scheme $\mathfrak{G}$, with connected fibres, of finite type, with $G_{K}$ as generic fibre and such that ${\tt Q} = \mathfrak{G}(\cO)$.
In this setting, 
 Bruhat-Tits theory provides integral models of certain bounded subgroups of $G(K)$. Let us introduce some notions to make this more precise.
 
Let $\boldsymbol{\Phi}$ denote the root system of $G$ relative to a maximal split torus $T$.  Let $\cA_{T}:=\Hom(\mathbb{G}_m,T) \otimes \mathbb{R}$ denote the apartment of $T$. For a point $\theta \in \cA_{T}$ and a root $r \in \Phi$, we set 
\begin{equation} \label{mrtheta} 
m_{r} \left(\theta \right) := - \left\lfloor{(r, \theta)} \right\rfloor.
\end{equation} 
Let $\cO$ be a complete discrete valuation ring with residue field $k$, with uniformizer $z$ and field of fractions $K$. Classically (see \cite{bruhattits1}), in terms of generators a parahoric subgroup ${\cP}_{\theta}$ of $G(K)$ is defined as
\begin{equation} \label{parahoricwithgen}
{\cP}_{\theta}:= \left\langle T(\cO), U_r\left( z^{m_r(\theta)} \cO\right), r \in \boldsymbol{\Phi} \right\rangle.
\end{equation}
 Maximal bounded subgroups of $G(K)$ are schematic and in fact parahoric. More generally,  for a bounded subset $\Omega$ of $\cA_T$ we set 
\begin{equation}\label{mromega}
  m_r(\Omega): = -\left\lfloor \inf_{\theta \in \Omega} r(\theta) \right\rfloor = \left\lceil \sup_{\theta \in \Omega} -r(\theta) \right\rceil. 
\end{equation}

Let $\tilde{\boldsymbol{\Phi}}:=\boldsymbol{\Phi} \cup \{0 \}$.
Recall, see \cite[Section~6.4.3, p.~133]{bruhattits1},
that a function $f\colon  \tilde{\boldsymbol{\Phi}} \rightarrow \mathbb{R}$ is said to be {\em concave} if whenever $r_{i} \in \tilde{\boldsymbol{\Phi}}$ are such that $\sum_i r_{i} \in \tilde{\boldsymbol{\Phi}}$, we have
\begin{equation}\label{concave}
f \left( \sum r_{i} \right) \leq \sum f(r_{i}).
\end{equation}
In \cite{bp} we say that a concave function $f$ is parahoric or equivalently of  type I, if there is a point $\theta$ in the apartment $\mathcal{A}_{_T}$ of $T$ such that for all $r \in \tilde{\boldsymbol{\Phi}}$ we have 
$$f(r)=\lceil -r(\theta) \rceil.$$

The functions $r \mapsto m_r(\Omega)$ are called concave functions of type II  and form a strictly smaller subclass of the set of all concave functions (see \cite[Def 1.2]{bp} for the details; see also \cite[\S 2.0.1]{insa}).

To each concave function $f$, one can associate unique upto a $G(K)$-conjugation, a bounded subgroup of $G(K)$ as follows:
\begin{equation}
{\cP}_{f}:= \left\langle T(\cO),U_r\left(  z^{\lceil f(r) \rceil} \cO \right), r \in \Phi \right\rangle.
\end{equation}
For each concave function $f$, Bruhat-Tits theory \cite{bt2} construct a smooth group scheme $\mathfrak{G}_{f} \rightarrow \textrm{Spec}(\mathcal{O})$ with generic fiber $G$ such that as subgroups of $G(K)$ we have
$$\mathfrak{G}_{f}(\mathcal{O}) = {\cP}_{f}.$$
 
 The main method employed in \cite{bt2} consists of taking schematic closures in faithful $\mathcal{D}$-modules. Later, over discrete valuation rings, J.-K.Yu gave a different construction of Bruhat-Tits group schemes $\mathfrak{G}_{f}$  \cite{yu}  by a process involving dilatations, starting with parahoric ones. A somewhat different geometrical approach for parahoric group schemes was observed by Balaji-Seshadri \cite{base}. They realised parahoric group schemes as invariant direct images, or equivalently {\em Weil restriction of scalars plus invariants}, over arbitrary complete discrete valuation rings. Further they proved the following global result over a smooth projective curve $X$.  A parahoric Bruhat-Tits group scheme $ \mathfrak{G}$ over a smooth projective curve $X$ is one whose generic fibre $\mathfrak{G}_{k(X)}$ is an absolutely simple, simply connected group scheme, which splits over a Galois extension and whose restriction to the formal neighbourhood of a finite set of closed points $x \in X$ are given by the usual parahoric group schemes. One of the main theorems of \cite{base} is the following.
 
   \begin{thm} (see \cite[Theorem 5.2.7, Remark 5.2.8]{base}) \label{mtbase}  For any parahoric Bruhat-Tits group scheme $\mathfrak{G}_X$, there exists a Galois cover $\psi:Y \to X$ with Galois group $\Gamma$, and a {\em semisimple group scheme} $\mathfrak{G}'_Y$ on $Y$, with split generic fibre $\mathfrak{G}'_{k(Y)}$ isomorphic to $G \otimes_{_{k(X)}} k(Y)$  so that $\mathfrak{G}_X$  is isomorphic to the {\em invariant direct image} of $\mathfrak{G}'_Y$, i.e.  
\begin{equation}\label{papparappakabaap}
\mathfrak{G}_X \simeq \psi_{_*}^{^{\Gamma}}(\mathfrak{G}'_Y) := \text{\tt Res}_{_{Y/X}}^{^{\Gamma}}(\mathfrak{G}'_Y).
\end{equation} 
\end{thm}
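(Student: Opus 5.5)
The plan is to reduce the global statement to a local one at each of the finitely many points where $\mathfrak{G}_X$ fails to be reductive, and then to glue. Let $R=\{x_1,\dots,x_m\}\subset X$ be the finite set of points over whose formal neighbourhoods $\mathfrak{G}_X$ is a parahoric (non-reductive) group scheme $\mathfrak{G}_{\theta_j}$, for points $\theta_j$ in the apartment. Since the generic fibre is absolutely simple and simply connected and splits over a Galois extension, we first choose a finite Galois extension of $k(X)$ splitting it. We then enlarge this extension so that, for each $j$, the ramification index $d_j$ at points over $x_j$ is divisible by a common denominator $N_j$ of the rational point $\theta_j$, i.e.\ $N_j\theta_j\in Y_*(T)$. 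This uses the standard existence of Galois covers of a curve with prescribed ramification (Kawamata/Namba type covering lemma). Call the resulting cover $\psi:Y\to X$ with group $\Gamma$.

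\textbf{Local step.} Over $\widehat{\cO}_{X,x_j}=k[[z]]$ and a point $y$ above it with $\widehat{\cO}_{Y,y}=k[[w]]$, $w^{d_j}=z$, let the stabiliser $\Gamma_y$ act on $G(k((w)))$ by the Galois action twisted by conjugation by $w^{d_j\theta_j}\in T(k((w)))$. We then verify the identity
\[
G(k[[w]])^{\Gamma_y,\ \text{twisted}}=\mathcal{P}_{\theta_j}
\]
using the generators in (\ref{parahoricwithgen}). Conjugation by $w^{d_j\theta_j}$ sends $U_r(w^a)$ to $U_r(w^{a+d_j r(\theta_j)})$, and taking invariants picks out exactly the exponents $\geq m_r(\theta_j)$ in $z$. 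Upgrading this equality of $\cO$-points to the scheme level, we show that $\text{\tt Res}^{\Gamma_y}$ of the twisted $G\times k[[w]]$ is smooth with connected fibres: fixed points of a tame finite group acting on a smooth scheme are smooth (in characteristic $p$ we need $p\nmid|\Gamma_y|$, or else an additional hypothesis). By the uniqueness of parahoric schemes as smooth connected models determined by their $\cO$-points (Bruhat--Tits \'etoff\'e property), this model is $\mathfrak{G}_{\theta_j}$.

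\textbf{Global step.} We build $\mathfrak{G}'_Y$ by gluing the split group $G\times(Y\setminus\psi^{-1}R)$, carrying the twisted $\Gamma$-action coming from the Galois descent datum of $\mathfrak{G}_{k(X)}$, with the local models $G\times\operatorname{Spec}k[[w]]$ carrying the twisted action above, via Beauville--Laszlo gluing. We then check that $\psi_*^\Gamma(\mathfrak{G}'_Y)$ agrees with $\mathfrak{G}_X$ away from $R$ (by \'etale Galois descent) and formally near each $x_j$ (by the local step). Hence the two agree globally, again by Beauville--Laszlo.

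The main obstacle is the local step at the scheme level: identifying the invariant direct image with the parahoric group scheme, rather than merely matching the $\cO$-points. The key inputs there are smoothness of invariants and connectedness of the special fibre. For connectedness I would compute the special fibre's reductive quotient as the centraliser of the finite-order element $\exp(2\pi i\theta_j)$, which is connected by simple connectedness (Steinberg).
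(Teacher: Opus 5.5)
The paper gives no proof of this statement; it is quoted from \cite{base}. For split generic fibre, your outline is essentially the Balaji--Seshadri argument: a covering-lemma cover whose ramification is divisible by the denominators of the $\theta_j$, twisted invariants of $\text{\tt Res}_{k[[w]]/k[[z]]}G$, smoothness of tame fixed points, Steinberg connectedness, Bruhat--Tits \'etoff\'e uniqueness, then gluing. As written, however, it has two gaps.

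\emph{First gap: non-split generic fibre.} The statement only requires $\mathfrak{G}_{k(X)}$ to split over a Galois extension. An outer twist that ramifies at a point forces $\mathfrak{G}_X$ to be non-reductive there, since a reductive model over a strictly henselian DVR is split. So such points lie in $R$, and there the local group is a \emph{ramified} quasi-split group whose parahorics are not given by \eqref{parahoricwithgen}. At such $x_j$ the stabiliser $\Gamma_y$ acts on $G_{k((w))}$ through a diagram automorphism $\tau$, in addition to $w\mapsto\zeta w$ and the twist by $w^{d_j\theta_j}$. This action permutes $U_r$ and $U_{\tau r}$, so your exponent count does not compute the invariants; for type $A_{2n}$ the invariant root groups are even non-abelian. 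Smoothness and connectedness survive: Edixhoven's result still applies, and Steinberg's theorem applies to the finite-order automorphism $\tau\circ\mathrm{Ad}(t)$, $t=(d_j\theta_j)(\zeta)$, of simply connected $G$. But identifying the invariants with the parahoric needs an extra input. One option is tame descent of buildings, $\mathcal{B}(G_{K})=\mathcal{B}(G_{L})^{\mathrm{Gal}(L/K)}$ with $K=k((z))$, combined with ``parahoric $=$ stabiliser'' for simply connected groups (Bruhat--Tits, Prasad--Yu). The other is an explicit computation with $\tau$-orbits of roots.

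\emph{Second gap: the global step.} The claim ``agrees with $\mathfrak{G}_X$ away from $R$ by \'etale Galois descent'' fails for two reasons.
\begin{enumerate}
\item Covering-lemma constructions in general introduce auxiliary branch points outside $R$, so $\psi$ is not \'etale over $X\setminus R$.
\item A constant group $G\times(Y\setminus\psi^{-1}R)$ with a $\Gamma$-action extending the generic descent datum need not exist, i.e.\ $\psi^*(\mathfrak{G}_X|_{X\setminus R})$ need not be constant. For example, take $R=\emptyset$ and $\mathfrak{G}_X=\mathrm{SL}(L\oplus L^{-1})$ with $L$ of degree $0$ and non-torsion; this stays non-constant on every finite cover. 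This is exactly why the theorem only asserts that $\mathfrak{G}'_Y$ is semisimple with split generic fibre.
\end{enumerate}
The repair is to take $\psi^*(\mathfrak{G}_X|_{X\setminus R})$, with its tautological $\Gamma$-action, as the part of $\mathfrak{G}'_Y$ over $Y\setminus\psi^{-1}R$. Since $|\Gamma|$ is invertible and $\cO_Y^{\Gamma}=\cO_X$, the averaging projector gives $(\cO_Y\otimes_{\cO_X}R')^{\Gamma}=R'$ for every $\cO_X$-algebra $R'$. Hence $\psi_*^{\Gamma}\psi^*\mathcal{H}\simeq\mathcal{H}$ for every affine $X$-group $\mathcal{H}$, including at the extra branch points. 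The local models over $\psi^{-1}(x_j)$ are then glued in using the given identification $\mathfrak{G}_X|_{k((z))}\simeq G_{k((z))}$, composed with conjugation by $w^{\pm d_j\theta_j}$.
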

See also \cite[Thm 1]{pr2024} for this result in positive characteristics under tameness assumptions of the residue field $k$. 

The question of generalization of Bruhat-Tits group schemes to bases of higher dimensions was raised and outlined by F.Bruhat and J.Tits themselves in \cite[ \S 3.9.4]{bt2}. The possible difficulties were also carefully elaborated in a few `possible' counter-examples for $\mathbb{A}^2_k$ and $SL_2$. In \S 3.2.14 {\em loc. cit.}, it is shown that in higher dimensions, taking schematic closures in different faithful $\mathcal{D}$-modules may produce non-isomorphic group schemes $\mathfrak{G}$  having the same structure of the big-cell. In \S 3.2.15 {\em loc. cit.}, it is shown that the schematic closure $\mathfrak{G}$ may {\em not be flat} and hence {\em not  a group scheme}. By removing a four dimensional component $\mathfrak{G}^{^1}_{_{(0,0)}}$,  although one gets a group scheme $\mathfrak{H}:= \mathfrak{G} - \mathfrak{G}^{^1}_{_{(0,0)}} $, but the important question of {\em its affineness is left in doubt}. 


The aim of this work is to prove, under some mild and natural  assumptions on the characteristic of the residue field, that  these higher {\tt BT}-group schemes exist and  are always smooth and {\em affine}; in fact we prove this under more general hypothesis, namely when $G$ is  {\em split reductive}, a hypothesis forced on us due to the inductive argument. We refer the reader to \cite[\S 3.2]{bt2} for the relevance of the split case. As an added benefit, we also obtain a new construction of the $n$-{\tt BT}-group schemes when the concave functions are more general than of type I (called of types II and III in {\cite{bp}) under milder hypothesis than those of {\em loc. cit.}, the proof relying nevertheless on  the existence of type I group schemes. This new approach also reveals certain finer aspects of the structure of these group schemes. It is based on the inductive process developed by J.-K.Yu \cite{yu} combined with the methods of \cite{bt2} and of course relying entirely on the earlier work \cite{bp}. We now state our main theorem.

\begin{thm} \label{mt}  
Let $X$ be a smooth quasi-projective scheme over $k$, and let  
 \begin{equation} D = \sum_{_{i=0}}^{^n} D_{_i},
 \end{equation}
be a reduced normal crossing divisor. For $0 \leq i \leq n$, we denote the generic point of the component $D_{_i}$ as 
\begin{equation} \zeta_{_i}.
\end{equation} Let us set 
\begin{eqnarray} A_{_i} & := & \mathcal O_{_{X, \zeta_{_{i}}}}, \\
 X_{_i} & := & \spec(\hat{A}_{_i}), \\
X_{_o} & := & X - D.
\end{eqnarray}
Let $G$ be a split reductive connected Chevalley group scheme over $\mathbb{Z}$ with a split maximal torus $T$.  Let $\boldsymbol{\boldsymbol{\Phi}}$ denote the root system of $G$ relative to $T$. For $0 \leq i \leq n$, let $f_{_i}: \boldsymbol{\Phi} \cup \{0 \} \rightarrow \mathbb{R}$ be  concave functions with $f_{_i}(0)=0$. Let $\mathfrak G_{_{f{_{_i}}}} \rightarrow X_{_i}$ be the {\tt BT} group scheme on $X_{_i}$ associated to the concave function $f_{_i}$. For any $\left\{t_{_i} \in T \left(X_{_o} \times_X X_{_i} \right) \right\}$, we say that there is a smooth group scheme $\mathfrak G$ on $X$  with connected fibres, unique upto isomorphism which interpolates the given datum $\left\{(D_{_i},f_{_i},t_{_i}) | 0 \leq i \leq n \right\}$ if
\begin{enumerate}
\item $\Gfr|_{_{X_{_o}}} \simeq G \times X_{_o}$,
\item $\Gfr|_{_{X_{_i}}} \simeq \Gfr_{_{f{_{_i}}}}$, for each $i$,
\item glued by $t_i$ on $X_{_o} \times_X X_{_i}$,
\item the big-cell structure on $\mathfrak{G}$ extends those on $G \times X_{_o}$ and $\Gfr_{_{f{_{_i}}}}$.
\end{enumerate}

Let $\theta_{_i} \in \text{Hom}(\mathbb{G}_{_m},T) \otimes \mathbb{Q}$ be rational points such that we have a morphism of BT-group schemes $\mathfrak{G}_{_{f_{_i}}} \ra \mathfrak{G}_{_{\theta_{_i}}}$ inducing identity at the generic fiber. Let positive integers $d_{_i}$ be least such that $d_{_i} \theta_{_i}$ belongs to $\text{Hom}(\mathbb{G}_{_m}, T)$. Let $k$ be a perfect field of characteristic coprime to all the $d_{_i}$ and containing the $d_{_i}$-th roots of unity. 

Then upto isomorphism there is a unique smooth {\em affine} group scheme $\mathfrak G$ on $X$  with connected fibres  which interpolates the given datum $\left\{(D_{_i},f_{_i},t_{_i}) | 0 \leq i \leq n \right\}$.
\end{thm}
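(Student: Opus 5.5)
Here is the plan: first handle the parahoric (type I) case by Kawamata coverings and invariant direct images, then reach general concave $f_i$ by a J.-K. Yu style sequence of Néron--Raynaud dilatations.

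\emph{Type I case.} Suppose every $f_i$ is of the form $r\mapsto\lceil -r(\theta_i)\rceil$. Since $k$ has characteristic prime to the $d_i$ and contains the $d_i$-th roots of unity, there is a Kawamata cover $\psi\colon Y\to X$. It is Galois with group $\Gamma$, totally ramified of order divisible by $d_i$ along each $D_i$, and $\psi^*D$ is again a normal crossing divisor. On $Y$ we take the reductive group scheme $G\times Y$. We twist its $\Gamma$-action by the cocharacters $d_i\theta_i\in\Hom(\mathbb{G}_m,T)$ acting locally along the components over $D_i$, as in Theorem \ref{mtbase}. The invariant direct image $\text{\tt Res}^{\Gamma}_{Y/X}(G\times Y)$ is then affine, because Weil restriction along a finite flat map of an affine scheme is affine, and taking $\Gamma$-fixed points is a closed condition. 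It is smooth by tameness, since $|\Gamma|$ is prime to the characteristic and fixed points of a smooth scheme under a linearly reductive group are smooth. Its completion at each $\zeta_i$ is $\mathfrak G_{\theta_i}$, which is the local computation of \cite{base} and \cite{bp}. Finally, we replace the gluing by the given $t_i$; this changes the scheme only by an automorphism on the overlaps $X_o\times_X X_i$.

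\emph{General concave functions, via Yu's induction.} Given $f_i$ with $\mathfrak G_{f_i}\to\mathfrak G_{\theta_i}$, we extend \cite{yu}. We connect $f_i$ to the type I function of $\theta_i$ by a finite chain of concave functions, each step raising $f$ on a set of roots closed under the relevant sums. Each step should be realised by one Néron--Raynaud dilatation of the current group scheme $\mathfrak G$ on $X$ along the divisor $D_i$. The centre of the dilatation is a smooth closed subgroup scheme $\mathfrak H\subset \mathfrak G|_{D_i}$. We build $\mathfrak H$ from the big-cell: it is generated by the torus part and the root subgroups $U_r$ whose level stays fixed. The main point to check is that $\mathfrak H$ is flat over $D_i$ and closed, including over the intersections $D_i\cap D_j$. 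There the structure theory of \cite{bp} should give a product decomposition of the big-cell over the normal crossing locus.

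Dilatation of an affine scheme along a closed subscheme of a Cartier divisor is affine, being an affine blow-up. It is smooth when the centre is smooth, and it preserves the group structure and connected fibres. Its restriction to $X_i$ is Yu's dilatation, so it equals $\mathfrak G_{f_i'}$, and it does not change the scheme away from $D_i$. Iterating over $i$ and over the steps gives existence, with the big-cell structure extending the given ones.

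\emph{Uniqueness.} Let $\mathfrak G$ and $\mathfrak G'$ be two such schemes. Their big-cells agree on $X_o$ and on each $X_i$ compatibly with the gluing $t_i$, so the big-cells agree in codimension one. The identity of $G$ on $X_o$ then extends to a birational group map. By the Bruhat--Tits argument of \cite{bt2} (\S 3.2), this extends to an isomorphism defined on the big-cells over all of $X$, hence on $\mathfrak G$, using translates of the big-cell and Hartogs on the smooth, normal schemes.

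The main obstacle is the flatness of the centres $\mathfrak H$ at the points where several components $D_i$ meet. This is exactly the phenomenon behind the non-flat closures in \cite[\S 3.2.15]{bt2}. I would handle it by doing the dilatations one divisor at a time. At each step I would verify locally, in étale coordinates $z_iz_j\cdots$, that $\mathfrak H$ is a product of a type I parahoric of a Levi over $D_i$ and affine spaces. This uses the concavity inequalities and the split hypothesis.
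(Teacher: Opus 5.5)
Your architecture matches the paper's: a type I base case, Yu-type dilatations one divisor at a time, and uniqueness by extension over codimension two. But the step that carries the whole difficulty is not supplied.

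\textbf{The centre of the dilatation.} For the dilatation along $D_i$ to be smooth and affine, its centre $\mathfrak H$ must be a \emph{closed}, smooth, connected subgroup scheme of $\mathfrak G|_{D_i}$ over all of $D_i$. Calling it ``generated by the torus and the root groups whose level stays fixed'' is not a construction once $\dim D_i\geq 2$. Over such a base, neither a subgroup generated by closed flat subgroups nor the schematic closure of $H_\kappa$ need be flat or closed. This is exactly the phenomenon of \cite[\S 3.2.15]{bt2}, and it is why \cite{bp} only reached quasi-affineness. ``Verify locally in \'etale coordinates'' names the problem rather than solving it.

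Your local model is also not right in general. The reductive part $H^{\mathrm{red}}_\kappa$ is only a maximal-rank subgroup of $\mathfrak G^{\mathrm{red}}_\kappa$, not a Levi subgroup. For $\mathrm{Sp}_4$, take $f=0$ on long roots and $f=\tfrac12$ on short roots; the first jump from the hyperspecial vertex gives $H_\kappa=\mathrm{SL}_2\times\mathrm{SL}_2$. Moreover, along $D_i\cap D_j$ the structure is governed by $f_j$, which after earlier dilatations is no longer of type I.

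\textbf{The missing idea} is an induction on $\dim X$. The paper regards $Y=D_n$ as a smooth scheme with normal crossing divisor $\sum_j Y\cap D_j$ and applies the theorem itself on $Y$; this is why it must be proved for split \emph{reductive} groups. The argument runs as follows.
\begin{enumerate}
\item The theorem on $Y$ produces the generically reductive quotient of $\mathfrak G|_Y$ as an affine smooth group scheme.
\item The quotient map and a Levi section are extended from the height $\leq 1$ part of $Y$ by \cite[Cor.~IX.1.4, IX.1.5]{raynaud}. This writes $\mathfrak G|_Y$ as a semidirect product with closed normal unipotent part.
\item Using Theorem~\ref{mcninch}, the reductive part of $\mathfrak H$ is the neutral centraliser of the multiplicative-type extension $Z_Y$ of $Z(H^{\mathrm{red}}_\kappa)$. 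This is closed and smooth by \cite{sga3}.
\item The unipotent part is made closed inside a smooth replacement of the stabiliser of $\mathrm{Lie}\,\mathfrak H^{\mathrm{u}}_Y$. Here \cite[Cor.~21.12.7]{ega4} rules out a codimension-two boundary.
\end{enumerate}

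\textbf{The type I step for non-simply-connected $G$.} As stated, this step fails, because the invariant direct image need not have connected fibres. Take $G=\mathrm{PGL}_2$, $\theta=\varpi^\vee/2$, $d=2$. Its group of $\hat A_i$-points is the full stabiliser $\mathcal I\sqcup n\mathcal I$ of $\theta$, where $n$ normalises the Iwahori subgroup $\mathcal I$. So its completion at $\zeta_i$ is not $\mathfrak G_\theta$, and passing to the neutral component destroys affineness in dimension $\geq 2$. The paper takes the simply connected case from \cite{bp}, then divides by the finite central subgroup $Z\times X$, treating the central torus separately and using \cite{sga3} for representability.

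Your uniqueness argument is fine: both candidates are affine, so Hartogs (or \cite[Cor.~IX.1.5]{raynaud}) suffices.
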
 

\noindent


We refer the reader to the expository article \cite{insa} for  some of the contexts of our present work. The framework of our main result is also that of \cite{bp} except that there we have assumed that $G$ is semisimple. The special case of Theorem \ref{mt}, when each $f_{_i}$ is parahoric, is essentially \cite[Theorem 5.7, \S 5.9]{bp}. 

Over regular base rings of dimension two,  group schemes were constructed in \cite[Section 3.0.9]{bt2}. But they were not proven to be affine. In dimension two, for a tuple of the type $\{0,f\}$ of concave functions,  we refer the reader to Pappas and Zhu \cite[Section 4]{pz} for the affineness of these group schemes.

Building on the construction of $n$-parahoric group schemes, in \cite[\S 6, \S 7]{bp}, we constructed smooth $n$-{\tt BT}-group schemes interpolating the datum of $n$-tuples of concave functions.  However, our methods there using \cite[Proposition 2.2.10]{bt2} could show  them to be  only {\em quasi-affine}. As is well-known, it is important for applications to know whether these are in fact {\em affine}. 

\subsubsection{Layout} The assumption on $G$ in \cite{bp} is that it is {\em split, semi-simple, and simply connected}. We first show that \cite[Theorem 5.5]{bp} holds  for $n$-parahoric group scheme when the generic group is assumed to be more generally, {\em split connected reductive}.  Under these general hypotheses  we first prove the existence and affineness of the $n$-{\tt BT}-group schemes when  the base X has {\em dimension two}. These two results provide the inductive datum for the proof of the existence and affineness of the $n$-{\tt BT}-group schemes when the base $X$ is higher dimensional.

\section{The recursive step of J.K. Yu \cite{yu}}
Although the results in this article are independent of \cite{yu}, the broad contour of concave functions in J.K. Yu's article is the foundation for the proof strategy of this work.

\subsection{Over arbitrary DVRs} \label{advr} In \cite{yu}, $\cO$ is a {\it Henselian} dvr with {\it perfect} residue field $\kappa$. However, in this paragraph we outline his recursive structure, without imposing the {\em perfectness} of the residue field and the {\em henselian} property of the dvr. But we assume that $G$ is a split Chevalley group scheme with maximal torus $T$ and $\cO$ is an arbitrary DVR. Let $f: \boldsymbol{\Phi} \cup \{0\} \ra \mathbb{R}$ be a concave function with $f(0)=0$ and let $\Gfr_{f}$ be the BT-group scheme on $\spec(\cO)$ defined by $f$. Thus, $\mathfrak{G}_{_{f}}$ contains the toral group scheme $$\mathfrak{T}:= T \times_{_\mathbb{Z}} \Spec(\cO).$$ Let $\Gfr_{_\theta}(\cO)$ be any maximal parahoric subgroup of $G(K)$ which contains  $\Gfr_{_f}(\cO)$, where $\theta$ is a point in the affine apartment $\cA_T$. Let 
$f_{_\theta}$ denote the linear concave function which on $r \in \boldsymbol{\Phi} \cup \{0\}$ gives $-r(\theta)$. 
Then we have 
$$f_{_\theta} \leq f.$$
For any $t \in [0,1]$ consider, the concave function
$$f_t:= (1-t) f_{_\theta} + t f.$$ 
Let $\lceil f \rceil$ denote the concave function such that $\lceil f \rceil (r)= \lceil f(r) \rceil$.
As $t$ increases, we can look at "small jumps", i.e., $0\leq s<e \leq 1$ such that $\lceil f_e \rceil \neq \lceil f_s \rceil$ however 
\begin{equation}\label{fefs} f_e(r) \leq f_s(r)+1 \quad \text{for all} \quad r \in \boldsymbol{\Phi}.\end{equation}
These jumps give connected Zariski closed algebraic subgroups in the closed fiber of $\Gfr_{_{f_s}}$:
\begin{equation} \label{Hkappa} H_{_{\kappa}} := \text{Image} \left(\Gfr_{_{f_e}} \ra \Gfr_{_{f_s}} \right)|_{\kappa}.
\end{equation}

As a subgroup of the closed fiber $\Gfr_{_{f_s}}|_{\kappa}$ this group is generated by the images of the following: the maximal torus and the collection of subgroups   $U_{r,e}|_{\kappa}$ of unipotent subgroups $U_{r,e} \subset \Gfr_{_{f_e}}$ at the closed fiber for roots $r \in \boldsymbol{\Phi}$ satisfying
\begin{equation} \label{rootsH} \lceil f_s(r) \rceil = \lceil f_e(r) \rceil,
\end{equation} (see \cite[paragraph before 7.3.1]{yu}). 

The recursive step of J.-K. Yu (see \cite[paragraphs before Lemma 8.1.2 and Lemma of \S 9.2]{yu}) reverses this construction. It starts with $\Gfr_{_{f_s}}$ and using $f_e$ defines $H_{_{\kappa}}$ to construct $\Gfr_{_{f_e}}$ by N\'eron-Raynaud dilatation of $\Gfr_{_{f_s}}$ along $H_{_{\kappa}}$. 

We recall the following notation and definition. Let $\mathcal G_{_{\cO}}$ be a smooth group scheme over $\cO$. Let $Z_{_k} \subset \mathcal G_{_{k}}$ be a closed subgroup. Then we denote by 
\begin{equation}\label{nerondilat}\mathcal G^{^{Z_{_{k}}}}_{_{\cO}}\end{equation} the  {\bf N\'eron-Raynaud dilatation} of  $\mathcal G_{_{\cO}}$ along $Z_{_k}$. We will simply call this a `dilatation' in this article. In this work, we use a higher dimensional analogue of dilatations which appears in \cite{mayeux}. 

Thus, starting with {\em parahoric group schemes} over discrete valuation rings,  and  taking iterated dilatations along subgroups $H_{\kappa}$ of the closed fibre of the group scheme as above, one can construct BT-group schemes associated to any concave function. We remind the reader  that BT-group schemes associated to concave functions of type I are precisely the parahoric group schemes.

\subsubsection{On a result of McNinch on Levi decompositions} Let $\mathcal{O}$ be a complete discrete valuation ring with quotient field $K$ and residue field ${{\kappa}}$, {\em not assumed to be perfect}. Let $\mathcal{P} \ra \Spec(\mathcal{O})$ be a connected parahoric group scheme containing a connected toral group scheme $\mathfrak{T} \hookrightarrow \cP$. Assume that $\mathfrak{T}_{_{K}}$ is maximal $K$-split. By \cite[Thm 1]{mcn} if $G_{_{K}} = \cP_{_{K}}$ splits over an unramified extension of $K$, then there is a reductive group scheme $\mathcal{M} \ra \Spec(A)$ containing $\mathfrak{T}$ such that $\cP_{_{{{\kappa}}}}$ has a Levi-decomposition with Levi factor $\cM_{_{{{\kappa}}}}$. The following generalises this result to arbitrary BT-group schemes under the assumption that $G$ is a connected split Chevalley group scheme.

 \begin{thm} \label{mcninch} Let the setting be as in \S \eqref{advr}, except that we do not assume the residue field is perfect.
 We have
 \begin{enumerate}
 \item[a)] The canonical quotient morphism $\mathfrak{G}_{_{f_{_{e,{\kappa}}} }} \ra H_{_{\kappa}}$ induces a natural isomorphism of the reductive quotients $$\mathfrak{G}^{^{\tt red}}_{_{f_{_{e,{\kappa}}}}}= H^{^{\tt red}}_{_{{\kappa}}}.$$
 \item[b)]  Fixing the inclusion $\mathfrak{T}_{_{{\kappa}}} \hookrightarrow \mathfrak{G}_{_{f_{_{e,{\kappa}}}}}$, the group $H_{_{{\kappa}}}$ has a natural Levi-decomposition.  
 \item[c)] For any concave function $f: \boldsymbol{\Phi} \cup \{ 0 \} \ra \mathbb{R}$ the BT-group scheme $\mathfrak{G}_{_{f}} \ra \Spec(A)$ has a reductive subgroup scheme $\cM$ containing $\mathfrak{T}$ such that $\mathfrak{G}_{_{f,{\kappa}}}$ has a Levi-decomposition with Levi factor $\cM_{_{{\kappa}}}$.
 \end{enumerate}
 \end{thm}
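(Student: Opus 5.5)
The plan is to work entirely with the explicit big-cell description of BT-group schemes for a split Chevalley group, and to use that the closed fibres are generated by $\mathfrak{T}_\kappa$ and root subgroups. For a concave function $g$ with $g(0)=0$, the closed fibre $\mathfrak{G}_{g,\kappa}$ is generated by $\mathfrak{T}_\kappa$ and the images $\bar U_{r,g}\simeq \mathbb{G}_a$ of the root subgroups $U_r(z^{\lceil g(r)\rceil}\cO)$. The commutator relations are the Chevalley relations reduced modulo $z$. Consequently, in $\mathfrak{G}_{g,\kappa}$ the root subgroup $\bar U_{r,g}$ commutes with $\bar U_{s,g}$ unless the relevant structure constant survives mod $z$, which happens exactly when $\lceil g(r)\rceil+\lceil g(s)\rceil=\lceil g(r+s)\rceil$ (and analogously for higher terms $ir+js$).

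Define $\Phi_g:=\{r\in\boldsymbol{\Phi}: \lceil g(r)\rceil+\lceil g(-r)\rceil=0\}$. Concavity gives $g(r)+g(-r)\geq 0$, so $\Phi_g$ is closed and symmetric. Let $\cM_g\subset \mathfrak{G}_g$ be the reductive subgroup scheme generated by $\mathfrak{T}$ and the $U_r(z^{\lceil g(r)\rceil}\cO)$ for $r\in\Phi_g$. Conjugating by the element $z^{\lambda}$ of $T(K)$ where one can (or working formally via the relations), this is a split reductive $\cO$-group scheme with root system $\Phi_g$.

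The key claims are then as follows.
\begin{enumerate}
\item[(i)] The subgroup $\mathcal{R}_g$ of $\mathfrak{G}_{g,\kappa}$ generated by $\bar U_{r,g}$ for $r\notin\Phi_g$ is a connected normal unipotent subgroup. Normality follows from the commutator formula, since $\lceil g(r)\rceil+\lceil g(s)\rceil \ge \lceil g(r+s)\rceil$ with $r\notin \Phi_g$ forces either $r+s\notin\Phi_g$ or a vanishing contribution. Unipotence follows by ordering $\boldsymbol{\Phi}\setminus\Phi_g$ via a height-type filtration and using that products of such root groups in the big cell are closed.
\item[(ii)] $\mathfrak{G}_{g,\kappa}=\cM_{g,\kappa}\ltimes\mathcal{R}_g$, with $\mathcal{R}_g$ the unipotent radical.
\end{enumerate}
This gives c) directly. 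Note that the argument never uses perfectness of $\kappa$: everything is defined over the prime field via Chevalley bases, so $\mathcal{R}_g$ is $\kappa$-split and its formation commutes with base change to $\bar\kappa$, where it is the unipotent radical.

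For a) and b), apply the above with $g=f_e$ and $g=f_s$. By \eqref{rootsH}, $H_\kappa$ is generated by $\mathfrak{T}_\kappa$ and $\bar U_{r}$ for $\lceil f_s(r)\rceil=\lceil f_e(r)\rceil$. The morphism $\mathfrak{G}_{f_e}\to\mathfrak{G}_{f_s}$ kills on the closed fibre exactly the $\bar U_{r,f_e}$ with $\lceil f_e(r)\rceil>\lceil f_s(r)\rceil$. The condition \eqref{fefs} gives $\lceil f_e(r)\rceil\le\lceil f_s(r)\rceil+1$ for each root; together with $f_s\le f_e$, which holds since $f_\theta\le f$, this shows that any $r\in\Phi_{f_e}$ satisfies $\lceil f_s(\pm r)\rceil=\lceil f_e(\pm r)\rceil$. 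Hence $\cM_{f_e,\kappa}$ maps isomorphically onto its image in $H_\kappa$, and the kernel of $\mathfrak{G}_{f_e,\kappa}\to H_\kappa$ is contained in $\mathcal{R}_{f_e}$. Thus the reductive quotients agree, giving a). The image of $\cM_{f_e,\kappa}$ together with the image of $\mathcal{R}_{f_e}$ then gives the Levi decomposition of $H_\kappa$ relative to $\mathfrak{T}_\kappa$, which is b).

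The main obstacle is (i)–(ii): showing that $\mathcal{R}_g$ is genuinely a smooth connected unipotent normal subgroup with trivial intersection with $\cM_{g,\kappa}$ in characteristics $2$ and $3$, where the higher Chevalley coefficients vanish mod $p$. The first approach would be to use the big-cell isomorphism $\prod_{r<0}\bar U_r\times\mathfrak{T}_\kappa\times\prod_{r>0}\bar U_r\simeq$ open cell, which is valid over any base by \cite{bt2}, to write $\mathcal{R}_g$ as a product of root groups. Its closure under multiplication would then be checked using only the inequality $\lceil g(r)\rceil+\lceil g(s)\rceil\ge\lceil g(r+s)\rceil$ rather than the nonvanishing of coefficients.
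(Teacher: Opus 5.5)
Your route differs from the paper's, but as written it has a real gap exactly where the content lies, namely (i)--(ii).

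\textbf{How the paper avoids (i)--(ii).} The paper never proves a Levi decomposition of an arbitrary closed fibre $\mathfrak{G}_{g,\kappa}$ from scratch. It takes McNinch's theorem as the starting hypothesis and cites \cite[Cor.~4.6.12(i)]{bt2} for the roots of the reductive quotient. It then propagates the Levi along Yu's jumps: Borel--de Siebenthal writes $H^{\mathrm{red}}_\kappa$ as the connected centralizer of its centre, that centre is lifted into $\mathfrak{T}$, and the resulting centralizer is pushed into $\mathfrak{G}_{f_e}$ by the universal property of dilatation.

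\textbf{Where your sketch of (i)--(ii) fails.} You want (i)--(ii) for every $g$ at once, with an explicit $\cM_g$. That is a cleaner target and avoids Borel--de Siebenthal, but your sketch does not deliver it, for three reasons. First, $\boldsymbol{\Phi}\setminus\Phi_g$ is stable under $r\mapsto -r$, so no height with respect to a positive system filters it. Second, the pairs $\bar U_{r,g},\bar U_{-r,g}\subset\mathcal{R}_g$ are not governed by the commutator formula at all; they do commute in the closed fibre when $\lceil g(r)\rceil+\lceil g(-r)\rceil\geq 1$, but that is a separate rank-one computation. Third, pairwise control of commutators does not by itself show that $\mathcal{R}_g$ contains no torus or meets $\cM_{g,\kappa}$ trivially.

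\textbf{The missing idea: a point adapted to $g$.} Take $x\in X_*(T)\otimes\mathbb{Q}$ in the relative interior of the polyhedron $\{x : -r(x)\leq\lceil g(r)\rceil \text{ for all } r\}$.
\begin{itemize}
\item Farkas' lemma plus concavity of $\lceil g\rceil$ for arbitrary finite families shows this polyhedron is non-empty.
\item The same argument shows $-r(x)=\lceil g(r)\rceil$ holds identically on it exactly for $r\in\Phi_g$. Indeed, a certificate $mr+\sum c_s s=0$, $m\lceil g(r)\rceil+\sum c_s\lceil g(s)\rceil=0$ with $m\geq 1$ forces $\lceil g(-r)\rceil\leq-\lceil g(r)\rceil$.
\item Then there is a morphism $\mathfrak{G}_g\to\mathfrak{G}_x$. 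The composite $\mathfrak{G}_{g,\kappa}\to\mathfrak{G}^{\mathrm{red}}_{x,\kappa}$ kills every $\bar U_{r,g}$ with $r\notin\Phi_g$: either the ceiling jumps, or $r(x)\notin\mathbb{Z}$ and the image lies in the unipotent radical. It is a closed immersion on $\cM_{g,\kappa}$.
\item Its kernel is therefore a smooth connected normal complement to $\cM_{g,\kappa}$. It contains no torus, since it is normal and meets the maximal torus $\mathfrak{T}_\kappa$ trivially, so it is unipotent.
\end{itemize}
Equivalently, $\delta(r)=\lceil g(r)\rceil+r(x)$ is the filtration you were after: it vanishes exactly on $\Phi_g$, and $\delta(ir+js)=i\delta(r)+j\delta(s)$ for every commutator term surviving mod $z$. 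The same $x$ shows $\cM_g$ is a hyperspecial parahoric of the subsystem group $G_{\Phi_g}$. This repairs your ``$z^\lambda$ where one can'' step, since such a $\lambda\in X_*(T)$ need not exist (e.g.\ $\mathrm{SL}_2$ with $\lceil g(\pm\alpha)\rceil=\pm1$). Alternatively, simply cite \cite[\S 4.6]{bt2}, as the paper does.

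\textbf{A smaller gap in a).} Injectivity of $\phi\colon\mathfrak{G}_{f_e,\kappa}\to H_\kappa$ on $\cM_{f_e,\kappa}$, together with the list of root groups it kills, does not yield $\ker\phi\subset\mathcal{R}_{f_e}$. You need $\phi(\cM_{f_e,\kappa})\cap\phi(\mathcal{R}_{f_e})=1$, and no general principle gives this: in characteristic $2$ the kernel of the special isogeny $\mathrm{PGL}_2\to\mathrm{SL}_2$ is a non-trivial normal unipotent subgroup scheme of a reductive group. Use (ii) for $f_s$ instead.
\begin{itemize}
\item Let $C=\{r\in\Phi_{f_s}:\lceil f_e(r)\rceil=\lceil f_s(r)\rceil\}$. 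This set is closed, and its symmetric part is exactly $\Phi_{f_e}$.
\item The Levi projection $\mathfrak{G}_{f_s,\kappa}\to\cM_{f_s,\kappa}$ maps $H_\kappa$ onto the subgroup attached to $C$, which is $\cM_{f_e,\kappa}\ltimes U_{C\setminus\Phi_{f_e}}$.
\item Composing with the projection to $\cM_{f_e,\kappa}$ gives a retraction of $H_\kappa$ onto $\phi(\cM_{f_e,\kappa})$ that kills $\phi(\mathcal{R}_{f_e})$. This proves a) and b) together.
\end{itemize}
This is essentially the paper's proof of b), with its Levi splitting $\lambda$ of $\mathfrak{G}_{f_s,\kappa}$ playing the role of your $\cM_{f_s,\kappa}$.
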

 \begin{proof} The proof assume the results of McNinch as the starting hypothesis. In particular, we assume the theorem for the group scheme $\Gfr_{_{f_s}}$. We abbreviate the $\mathcal{O}$-group schemes $\Gfr_{_{f_s}}$ by $\mathfrak{G}_{_{\mathcal{O}}}$ and $\Gfr_{_{f_e}}$ by $\Gfr'_{_{\mathcal{O}}}$.  We have a natural factorisation
$$\Gfr'_{_{\kappa}} \twoheadrightarrow H_{_{\kappa}} \hookrightarrow \mathfrak{G}_{_{{\kappa}}},$$
where the surjection above restricts to an isomorphism on $\mathfrak{T}_{_{\kappa}}$. As a subgroup of $\mathfrak{G}_{_{\kappa}}$, $H_{_{\kappa}}$ is generated by 
$$\left\langle \mathfrak{T}_{_{\kappa}}, U_{_{a,{\kappa}}} \quad \text{where} \quad f_{_{e}}(a)=f_{_{s}}(a) \quad \text{for} \quad a \in \boldsymbol{\Phi} \right\rangle.$$

With respect to $\mathfrak{T}_{_{\kappa}}$, let $\boldsymbol{\Phi}_{_{s}}$ (resp. $\boldsymbol{\Phi}_{_{e}}$) denote the root system of $\mathfrak{G}_{_{f_{_{s}}, \kappa}}$ (resp. $\mathfrak{G}_{_{f_{_{e}}, {\kappa}}}$).
 Thus, the root system of the reductive reduction can be described as follows :
\begin{equation}
\boldsymbol{\Phi}^{^{\tt red}}_{_{f_{_{e}}}} = \left\{ a \in \boldsymbol{\Phi} | f_{_{e}}(a)+ f_{_{e}}(-a) =0 \right\},
\end{equation} 
(see \cite[Corollaire 4.6.12 (i)]{bt2}). By \eqref{Hkappa}, it follows that for $a \in \boldsymbol{\Phi}_{_{e}}~\setminus~\boldsymbol{\Phi}^{^{\tt red}}_{_{f_{_{e}}}}$ the image of $U_{_{a,\kappa}}$ in $H_{_{\kappa}}$ vanishes. On the other hand, since for all $a \in \boldsymbol{\Phi}$, $f_{_{s}}(a)+f_{_{s}}(-a) \geq 0$ we see that
$$f_{_{e}}(a) > f_{_{s}}(a) \implies a \in \boldsymbol{\Phi}_{_{e}} \setminus \boldsymbol{\Phi}^{^{\tt red}}_{_{f_{_{e}}}},$$
i.e.,  $a \in \boldsymbol{\Phi}^{^{\tt red}}_{_{f_{_{e}}}} \implies f_{_{e}}(a) = f_{_{s}}(a)$.  Hence by \eqref{rootsH}, it follows that 
$$H^{^{\tt red}}_{_{{\kappa}}}=\left\langle  \mathfrak{T}_{_{\kappa}}, U_{_{a,\kappa}} | a \in \boldsymbol{\Phi}^{^{\tt red}}_{_{f_{_{e}}}} \right\rangle,$$
which is $\Gfr'^{^{\tt red}}_{_{{\kappa}}}$. Summarising the above, we have
\begin{eqnarray}
\Gfr'^{^{\tt red}}_{_{{\kappa}}} & = & H^{^{\tt red}}_{_{{\kappa}}}, \\
H^{^{\tt red}}_{_{{\kappa}}} & = & \text{Image} \left(\Gfr'^{^{\tt red}}_{_{{\kappa}}} \hookrightarrow \mathfrak{G}^{^{\tt red}}_{_{{\kappa}}} \right).
\end{eqnarray}
This proves a). \\

\noindent
{\ul{We now prove b).}}  By the hypothesis on $\Gfr_{_{f_s}}$, the natural exact sequence on its closed fibre
\begin{equation}\label{levieqn}1 \ra \mathfrak{G}^{^{u}}_{_{{\kappa}}} \ra \mathfrak{G}_{_{{\kappa}}} \stackrel{\pi}{\ra} \mathfrak{G}^{^{\tt red}}_{_{{\kappa}}} \ra 1\end{equation} splits. Let us fix the Levi splitting 
 
  $$\lambda: \mathfrak{G}^{^{\tt red}}_{_{{\kappa}}} \ra \mathfrak{G}_{_{{\kappa}}}.$$ The splitting becomes unique since we fix the inclusion $\mathfrak{T}_{_{{\kappa}}} \hookrightarrow \mathfrak{G}_{_{{\kappa}}}$.  Let $$i: H_{_{{\kappa}}} \ra \mathfrak{G}_{_{{\kappa}}}$$ denote the inclusion morphism.  Similarly the kernel $H^{^{u}}_{_{{\kappa}}}$ of the natural quotient map $$\pi_{_{H}}: H_{_{{\kappa}}} \ra H^{^{\tt red}}_{_{{\kappa}}}$$ is isomorphic to $\mathfrak{G}^{^{u}}_{_{{\kappa}}} \cap H_{_{{\kappa}}}$.
 Thus, we have
 \begin{equation} \label{u-red}
 \xymatrix{
 1 \ar[r] & \mathfrak{G}^{^{u}}_{_{{\kappa}}}  \ar[r] & \mathfrak{G}_{_{{\kappa}}} \ar[r]^{\pi} & \mathfrak{G}^{^{\tt red}}_{_{{\kappa}}} \ar[r] & 1 \\
 1 \ar[r] & H^{^{u}}_{_{{\kappa}}}  \ar@{^{(}->}[u] \ar[r] & H_{_{{\kappa}}} \ar[r]^{\pi_{_H}} \ar@{^{(}->}[u]^{i} & H^{^{\tt red}}_{_{{\kappa}}} \ar[r] \ar@{^{(}->}[u] & 1 
} 
 \end{equation}
 
  Let $\boldsymbol{\Phi}_{_{H}}$ (resp. $\boldsymbol{\Phi}^{^{\tt red}}_{_{H}}$) denote the set of roots of $H_{_{{\kappa}}}$ (resp. the root system of $H^{^{\tt red}}_{_{{\kappa}}}$) with respect to $\mathfrak{T}_{_{{\kappa}}}$. Then we may view the root system $\boldsymbol{\Phi}^{^{\tt red}}_{_{H}}$ as a subset of $\boldsymbol{\Phi}_{_{H}}$. Viewing each $r \in \boldsymbol{\Phi}^{^{\tt red}}_{_{H}}$ as a root in $\boldsymbol{\Phi}_{_{H}}$, we denote the corresponding root homomorphisms by $$u_{_{r}}: \mathbb{G}_{_{a}} \ra H_{_{{\kappa}}}. $$  Consider the subgroup generated by $\mathfrak{T}_{_{{\kappa}}}$ and the images $U_{_{r}}$ of the $u_{_{r}}$, i.e.,
 $$ S_{_{{\kappa}}}:= \left\langle \mathfrak{T}_{_{{\kappa}}}, U_{_{r}}, r \in \boldsymbol{\Phi}^{^{\tt red}}_{_{H}} \right\rangle < H_{_{{\kappa}}}.$$
 
 Since we have $$\lambda \circ \pi \circ i \circ u_{_{r}} = i \circ u_{_{r}},$$
 therefore as  subgroups of $\mathfrak{G}_{_{{\kappa}}}$, we have
 \begin{eqnarray*} i \left(S_{_{{\kappa}}} \right)= \left\langle i \left(\mathfrak{T}_{_{{\kappa}}} \right), i \circ u_{_{r}}, r \in \boldsymbol{\Phi}^{^{\tt red}}_{_{H}} \right\rangle = \left\langle \lambda \circ \pi \left(\mathfrak{T}_{_{{\kappa}}} \right),  \lambda \circ \pi \circ i \circ u_{_{r}}, r \in \boldsymbol{\Phi}^{^{\tt red}}_{_{H}} \right\rangle \\ = \lambda \left(\left\langle \pi \left(\mathfrak{T}_{_{{\kappa}}} \right),   \pi \circ i \circ u_{_{r}}, r \in \boldsymbol{\Phi}^{^{\tt red}}_{_{H}} \right\rangle \right) = \lambda \left(H^{^{\tt red}}_{_{{\kappa}}} \right).
 \end{eqnarray*}
 
 Consequently, by \eqref{u-red} the intersection
 $$i \left(S_{_{{\kappa}}} \right) \cap \mathfrak{G}^{^{u}}_{_{{\kappa}}}$$
 is trivial and hence $S_{_{{\kappa}}} \cap H^{^{u}}_{_{{\kappa}}}$ is trivial. Hence $H_{_{{\kappa}}}$ acquires a Levi-decomposition by the subgroup $S_{_{{\kappa}}}$. This completes the proof of b).\\

\noindent
{\ul{We now prove c).}} 
 As before, we assume the statement c) for $f$ equalling $f_s$ and prove it for $f_e$.

We begin by noting that $\lambda \left(H^{^{\tt red}}_{_{{\kappa}}} \right)$ is a {\em maximal rank reductive subgroup} of the reductive group $\mathfrak{G}^{^{\tt red}}_{_{{\kappa}}}$. {\em From now on by $H^{^{\tt red}}_{_{{\kappa}}}$ we mean the subgroup $\lambda \left(H^{^{\tt red}}_{_{{\kappa}}} \right)$}.  Hence, by the Borel-de Seibenthal Theorem, we can realise this subgroup as the centralizer of its own centre in the bigger group (with the proviso that we avoid small prime characteristics!), i.e., $$H^{^{\tt red}}_{_{{\kappa}}} =  C_{_{\mathfrak{G}^{^{\tt red}}_{_{{\kappa}}}}} \left(Z \left(H^{^{\tt red}}_{_{{\kappa}}} \right)\right)^{^o}.$$

Let us denote the center of $H^{^{\tt red}}_{_{{\kappa}}}$ simply by:
\begin{equation}
Z_{_{{\kappa}}}:= Z\left( H^{^{\tt red}}_{_{{\kappa}}} \right).
\end{equation} 

Thus, $Z_{_{{\kappa}}}$ is a subgroup scheme of $\mathfrak{T}_{_{{\kappa}}}$. These are multiplicative type group schemes. Since $\mathfrak{T}_{_{{\kappa}}}$ spreads to ${\mathcal{O}}$ as a split maximal torus $\mathfrak{T}_{_{{\mathcal{O}}}}$, the subgroup $Z_{_{{\kappa}}}$ spreads as well. We denote the $\mathcal{O}$-lift as 
\begin{equation}
\mathfrak{Z}_{_{{\mathcal{O}}}} \hookrightarrow \mathfrak{T}_{_{{\mathcal{O}}}}.
\end{equation}

By the inductive assumption that c) holds for $f$, the reductive quotient $\mathfrak{G}_{_{{\kappa},red}}$ spreads to  
$$\mathfrak{G}^{^{\tt red}}_{_{{\mathcal{O}}}} \hookrightarrow \mathfrak{G}_{_{\mathcal{O}}}$$
as a reductive subgroup scheme.

Taking centralizers, we now set
\begin{equation}
{H}^{^{\tt red}}_{_{{\mathcal{O}}}} :=C_{_{\mathfrak{G}^{^{\tt red}}_{_{{\mathcal{O}}}}}} \left(\mathfrak{Z}_{_{{\mathcal{O}}}} \right)^{^o},
\end{equation}
which provides a {\em (connected) reductive spread} of ${H}^{^{\tt red}}_{_{{\kappa}}}$.
 
Since $\Gfr_{_{\cO}}$ is a smooth affine ${\mathcal{O}}$-group scheme and $\mathfrak{Z}_{_{{\mathcal{O}}}}$ is a subgroup scheme of multiplicative type, by Theorem\ref{sixthree} we get the inclusions
\begin{equation}\label{firstincl}
{H}^{^{\tt red}}_{_{{\mathcal{O}}}} \hookrightarrow \mathfrak{G}^{^{\tt red}}_{_{{\mathcal{O}}}} \hookrightarrow \mathfrak{G}_{_{\mathcal{O}}}.
\end{equation} 
Thus the reductive spread ${H}^{^{\tt red}}_{_{{\mathcal{O}}}}$ is a {\em {\ul{connected, closed}} and {\ul{smooth}} subgroup scheme}. 
By the universal property of taking dilatations along closed subschemes of the closed fiber, we have the tautological identification:
$$ {H}^{^{\tt red}}_{_{{\mathcal{O}}}} = \left({H}^{^{\tt red}}_{_{{\mathcal{O}}}} \right)^{^{H^{^{\tt red}}_{_{{\kappa}}}}} $$ and also get  the canonical inclusion of \ul{connected} $\mathcal{O}$ group schemes:
$${H}^{^{\tt red}}_{_{{\mathcal{O}}}} \hookrightarrow \mathfrak{G}_{_{\mathcal{O}}}^{^{H^{^{\tt red}}_{_{{\kappa}}}}}.$$
Similarly have the following morphism of \ul{connected} $\mathcal{O}$ group schemes:
$$ \mathfrak{G}_{_{\mathcal{O}}}^{^{H_{_{{\kappa}}}^{^{\tt red}}}} \ra \mathfrak{G}_{_{\mathcal{O}}}^{^{H_{_{{\kappa}}}}}= \mathfrak{G}_{_{f_{_{e}}}} = \Gfr'_{_{\mathcal{O}}}.$$
Composing the above morphisms, it follows that ${H}^{^{\tt red}}_{_{{\mathcal{O}}}}$ is a connected reductive subgroup scheme of $\mathfrak{G}_{_{f_{_{e}}}} = \Gfr'_{_{\mathcal{O}}}$,  giving the sought Levi-decomposition of $\mathfrak{G}_{_{f_{_{e}}}}$ in the closed fiber together with an $\mathcal{O}$-spread.
 \end{proof}

\section{Parahoric Higher BT group scheme in the split reductive case are affine} \label{par-are-affine}

We consider the case when the concave functions $f_{_i}$ are given by a tuple of points $\left(\theta_0, \ldots, \theta_n \right)$ of an affine apartment. Note that in this case we have $f_{_i}(0)=0$. 
 When $G$ is semisimple simply-connected by \cite[Theorem 5.4, Proposition 5.5, Theorem 5.7]{bp}, the smooth group scheme interpolating the given generic datum is shown to be {\it affine, with connected fibers} and to possess a big-cell structure. In what follows, we extend these results to the case $G$ is split reductive.

\subsubsection{When $G$ is not simply-connected} Let $G_{_{sc}}$ denote its simply-connected cover and let $Z:= ker \left(G_{_{sc}} \ra G \right)$ be the finite kernel. Let $T$ denote the maximal torus of $G$ and $T_{_{sc}}$ that of $G_{_{sc}}$. We may identify the apartments $\cA_{_{T}}$ and that of $T_{_{sc}}$. Lifting the gluing functions $t_i$ to $T_{_{sc}}$ and then viewing the tuple of points $(\theta_0, \ldots, \theta_n)$ as those of an affine apartment of $T_{_{sc}}$, by the previous step, we get an affine group scheme $\mathfrak{G}_{_{sc}}$ on $X$.  The subgroup schemes $Z \times X_{_i} \hookrightarrow T \times X_{_i}$ of $\Gfr|_{_{X_{_i}}}$ and $Z \times X_{_o} \hookrightarrow T \times X_{_o}$ of $\Gfr|_{_{X_{_o}}}$ glue by $t_{_i}$ to extend as the product group schemes $$\mathfrak{Z}:=Z \times X \hookrightarrow \mathfrak{T} := T \times X.$$   By \cite[Expose VI, Remarque 9.3, Expose V,7.1]{sga3},  $\mathfrak{G}_{_{sc}}/\mathfrak{Z}$ is representable and is the sought affine prolongation of $G$ to $X$ with connected fibers and with the big-cell given by the image of the big-cell of  $\mathfrak{G}_{_{sc}}$.

\subsubsection{We now consider the case $G$ is split reductive} As before $T$ extends to $X$ as $\mathfrak{T}$ which is in fact the product $T \times X$. Let $G_{_{\tt{der}}}$ be the {\em derived subgroup} of $G$ and $Z_{_{G}}^0$ denote the connected component of identity in its centre. Set $Z:= G_{_{\tt{der}}} \cap Z_{_G}^0$. The apartment of $T$ is simply a product of the apartment of $T_{_{\tt{der}}}:=T \cap G_{_{\tt{der}}}$ with that of $Z_{_{G}}^0$. We project the given tuple of points to the apartment of $T_{_{\tt{der}}}$ and take lifts in $T_{_{\tt{der}}}$ of the images of the gluing functions $t_i$ in $T_{_{ad}}$. Let $\mathfrak{G}_{_{\tt{der}}}$ denote the affine group scheme on $X$ interpolating these. Further $Z \hookrightarrow Z_{_G}^0 \hookrightarrow T$ extend to $X$ as simply the product group schemes $$Z_{_X}:= Z \times X \hookrightarrow Z_{_{G,X}}^0 := Z_{_G}^0 \times X \hookrightarrow \mathfrak{T}.$$Thus, again by {\em loc.cit}, $\left(\mathfrak{G}_{_{\tt{der}}} \times Z_{_{G,X}}^0  \right)/Z_X$ is the sought affine prolongation of $G$ to $X$ with connected fibers and with big-cell given by the image of the big cell of $\mathfrak{G}_{_{\tt{der}}} \times Z_{_{G,X}}^0$.

\section{The recursive step over higher dimensional bases} 
For proving Theorem \ref{mt}, \S \ref{par-are-affine} is the base case. The notations are as in \S \ref{intro}. To prove the general case, in what follows, we apply the recursive step of J.~K. Yu to the restrictions on the completed local rings at the primes of height $1$ given by the $D_{_j}$. By taking the completions of the local rings at the primes of height $1$, we get into the situation described above on each $X_{_i}$. In this setting, we may assume the existence of $\Gfr_{_{f_e}}$ from Bruhat-Tits theory and therefore that of $H_{_{\kappa}}$ as in \eqref{Hkappa} without apppealing to \cite{yu}. Thus, we get closed subgroups of the $H_{_\kappa}$-type in the closed fibre of the restriction of the group scheme to $X_{_i}$. The new feature is then to extend this process to higher dimensions. We will prove the following key inductive result in the next sections.

\begin{thm} \label{first} Let $G$ be a split connected reductive algebraic group. Let $\Gfr \rightarrow X$ be a connected smooth affine group scheme with a big-cell on a smooth quasi-projective scheme $X$ as in Theorem \ref{mt}. Let  $D$ be one of the divisors with generic point $\text{Spec}(\kappa)$ which we fix. Let 
$\Gfr_{_D} = \Gfr|_{_{D}}$ denote the restrictions of $\Gfr$ to $D$ and let 
$ \Gfr_{_{D,\kappa}}$ denote its restriction to the generic point of $D$. Let $H_{_{\kappa}} \subset \Gfr_{_{D,\kappa}}$ be a closed connected smooth subgroup scheme as in \eqref{Hkappa}. Then $H_{_{\kappa}}$ extends to $D$ as an affine closed connected smooth subgroup scheme $\mathfrak{H}_{_{D}}$ of $\Gfr_D$. Further, the big-cell of $\Gfr$ canonically induces one on the dilatation of $\Gfr$ along  $\mathfrak{H}_D$.
\end{thm}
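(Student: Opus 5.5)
We will construct $\mathfrak{H}_{_D}$ directly from the big-cell of $\Gfr$ rather than by taking a naive schematic closure. A closure in $\Gfr_{_D}$ may fail to be flat over $D$, and its affineness is exactly what is in doubt.

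By hypothesis (4) of Theorem \ref{mt}, $\Gfr$ contains the split torus $\mathfrak{T}=T\times X$. It also contains, for each $r\in\boldsymbol{\Phi}$, a root subgroup scheme $\mathfrak{U}_{_r}\subset \Gfr$. Each $\mathfrak{U}_{_r}$ is a line bundle (vector group) $\mathbb{V}(\mathcal{L}_{_r})$ on $X$, given on $X_{_i}$ by $U_r\left(z_i^{\lceil f_i(r)\rceil}\hat A_i\right)$. The big-cell is the open subscheme
$$\Omega=\prod_{r<0}\mathfrak{U}_{_r}\times\mathfrak{T}\times\prod_{r>0}\mathfrak{U}_{_r}\subset \Gfr.$$
By \eqref{rootsH}, $H_{_\kappa}$ is generated by $\mathfrak{T}_{_\kappa}$ and by $U_{r,e}|_{\kappa}$. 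The latter is either all of $\mathfrak{U}_{_{r,\kappa}}$ when $\lceil f_s(r)\rceil=\lceil f_e(r)\rceil$, or trivial otherwise. Let $\Psi\subset\boldsymbol{\Phi}$ be the set of roots of the first kind. By concavity, $\Psi$ is closed in the sense that $\Psi\cup\{0\}$ is stable under those sums lying in $\tilde{\boldsymbol{\Phi}}$, and this was already used in Theorem \ref{mcninch}.

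\textbf{Step 1: the big-cell of the extension.} Set
$$\Omega_{_H}:=\prod_{r\in\Psi,\,r<0}\mathfrak{U}_{_{r,D}}\times\mathfrak{T}_{_D}\times\prod_{r\in\Psi,\,r>0}\mathfrak{U}_{_{r,D}}.$$
This is a smooth closed subscheme of $\Omega_{_D}$ with generic fibre the big-cell of $H_{_\kappa}$.

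\textbf{Step 2: constructing $\mathfrak{H}_{_D}$.} The guiding model is Theorem \ref{mcninch}(c): use Levi-type structure together with centralizers of multiplicative subgroups, since these always extend as closed smooth subgroups by Theorem \ref{sixthree}. By Theorem \ref{mcninch}(b), $H_{_\kappa}=H^{\tt red}_{_\kappa}\ltimes H^u_{_\kappa}$. Let $Z_{_\kappa}$ be the centre of $H^{\tt red}_{_\kappa}$. It is diagonalisable inside the split torus $\mathfrak{T}_{_\kappa}$, so it spreads to a multiplicative subgroup $\mathfrak{Z}_{_D}\subset\mathfrak{T}_{_D}$; here we use that characteristic avoids the bad primes, as guaranteed by the hypotheses on $d_i$ and $k$. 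We then set
$$\mathfrak{H}^{\tt red}_{_D}:=C_{\Gfr_{_D}}(\mathfrak{Z}_{_D})^{o}$$
(or its intersection with the extended Levi), which is closed, smooth and affine by Theorem \ref{sixthree}.

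For the unipotent part, the product $\mathfrak{H}^u_{_D}:=\prod_{r\in\Psi\setminus\Psi^{\tt red}}\mathfrak{U}_{_{r,D}}$, taken in a fixed order, should be a closed smooth subgroup of the unipotent radical. We would verify closure under multiplication using the Chevalley commutator formula over $D$. Concavity guarantees that all commutator terms land in root groups indexed by $\Psi$ with the correct integral structure, and this can be checked on the dense open $X_{_o}\cap D$ together with the flatness of each vector group.

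We then define $\mathfrak{H}_{_D}$ as the image of the multiplication map $\mathfrak{H}^{\tt red}_{_D}\ltimes\mathfrak{H}^u_{_D}\to\Gfr_{_D}$. To see that this map is a closed immersion, we would argue that it is a monomorphism that is proper over $D$, and check this fibrewise using the Levi decompositions of Theorem \ref{mcninch} at each point of $D$. Affineness then follows because $\mathfrak{H}_{_D}$ is closed in the affine scheme $\Gfr_{_D}$. Connectedness and smoothness follow from the product structure, and the generic fibre is $H_{_\kappa}$ by construction.

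\textbf{Step 3: the big-cell of the dilatation.} Let $\Gfr'$ be the dilatation (\`a la \cite{mayeux}) of $\Gfr$ along $\mathfrak{H}_{_D}$. Dilatation is functorial and commutes with products over $X$ and with flat base change. Hence $\Omega'$, the dilatation of $\Omega$ along $\Omega\cap\mathfrak{H}_{_D}=\Omega_{_H}$, is an open subscheme of $\Gfr'$. Because $\Omega_{_H}$ is a product of subfactors, $\Omega'$ decomposes as the product of:
\begin{itemize}
\item $\mathfrak{T}$ (unchanged, since $\mathfrak{T}_{_D}\subset\mathfrak{H}_{_D}$);
\item $\mathfrak{U}_{_r}$ for $r\in\Psi$;
\item the dilatations $\mathfrak{U}_{_r}^{\{0\}}=\mathbb{V}(\mathcal{L}_{_r}(-D))$ for $r\notin\Psi$.
\end{itemize}
This is exactly the shift $\lceil f_s\rceil\mapsto\lceil f_e\rceil$ at $D$, so it is the required big-cell.

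\textbf{Main obstacle.} The hardest point is Step 2: proving that $\mathfrak{H}_{_D}$ is closed in $\Gfr_{_D}$ and flat over all of $D$, including its codimension-one strata where $D$ meets other components $D_j$. At those strata the fibres of $\Gfr_{_D}$ degenerate further, so the Levi splitting of $H_{_\kappa}$ must be shown to specialise compatibly. My first attempt would be to reduce to the parahoric case by enlarging $\Gfr$ to the type I group scheme of \S\ref{par-are-affine}, where affineness and the big-cell are already known, and to apply the centralizer argument there.
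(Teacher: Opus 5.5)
Your Step~2 has a genuine gap, and it is exactly the content of the theorem once $\dim X\ge 3$. For $\dim X=2$ it is harmless: $D$ is Dedekind, so the schematic closure of $H_{\kappa}$ in $\Gfr_D$ is flat and hence a closed subgroup by \cite[1.2.7]{bt2}, which is what the paper does there.

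Your argument does not establish that $\mathfrak{H}_D$, or even $\prod_{r\in\Psi\setminus\Psi^{\mathrm{red}}}\mathfrak{U}_{r,D}$, is closed in $\Gfr_D$, for three reasons.
\begin{enumerate}
\item A product of root groups is closed in the big cell $\Omega_D$, but $\Omega_D$ is only open in $\Gfr_D$.
\item Calling it a closed subgroup ``of the unipotent radical'' presupposes a closed normal prolongation of $\Gfr^{u}_{\kappa}$ to $D$, which you never build.
\item Forming $\mathfrak{H}^{\mathrm{red}}_D\ltimes\mathfrak{H}^u_D$ needs conjugation to preserve $\mathfrak{H}^u_D$ over all of $D$. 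A density argument gives this only after closedness is known.
\end{enumerate}
Your proposed remedy does not work either. A proper monomorphism is indeed a closed immersion, but properness cannot be checked fibre by fibre. For example, the neutral component of a smooth group scheme over a DVR with disconnected special fibre is flat, a monomorphism, and a closed immersion on every fibre, yet it is not closed. Moreover, Theorem~\ref{mcninch} is a statement over a DVR. It says nothing about the fibres of $\Gfr_D$ at points of $D\cap D_j$ and their intersections, which is exactly where closedness and flatness can fail (compare \cite[3.2.15]{bt2}).

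The missing idea is induction on dimension combined with extension across codimension two. The paper's route is as follows.
\begin{enumerate}
\item It treats $Y=D$, with the divisors $Y\cap D_i$, as a new instance of Theorem~\ref{mt}. The generically reductive quotients at the height-one points of $Y$ then interpolate to an affine smooth group scheme on $Y$.
\item Theorem~\ref{Corollaire IX. 1.4} extends the quotient map and the Levi section across the codimension-two complement. So $\Gfr^u_Y$ becomes a closed normal kernel and, by Theorem~\ref{uniquenessraynaud}, $\Gfr_Y$ is a semidirect product.
\item The reductive part of $H$ is a centralizer of $Z_Y$, as in your plan.
\item For the unipotent part, the paper takes the stabilizer $N_Y$ of $\mathrm{Lie}\,\mathfrak{H}^u_Y$, which is closed by Example~\ref{exemplee}. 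Using the induction hypothesis again, it builds a smooth subscheme $N^{sm}_Y$ and proves it closed by a codimension mismatch. Its complement in its schematic closure has codimension $\ge 2$, whereas Theorem~\ref{van} forces codimension one for the complement of an affine open immersion.
\item Only then does the $W$-functor of Definition~\ref{Wfunctor} embed $\mathfrak{H}^u_Y$ as a closed subgroup of $\mathfrak{N}^u_Y$.
\end{enumerate}

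Your Step~3, computing the dilatation of the big cell factor by factor, is a reasonable alternative to the paper's approach of gluing over $U_{\mathrm{ht}\le 1}$ and taking closures. However, it presupposes $\Omega\cap\mathfrak{H}_D=\Omega_H$, so it still depends on Step~2.
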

Let us see how after \S \ref{par-are-affine}, Theorem \ref{mt} reduces to \ref{first}.



\subsubsection{Reduction to the case $D$ is a single divisor assuming Theorem \ref{first}} \label{redtoonedivisor} 
Let $\Gfr \rightarrow X$ be a smooth affine group scheme on a smooth quasi-projective scheme $(X,D)$, with  $D = \cup_{j=0}^{n} D_{_j}$ a s.n.c.divisor. 
 Along the component $D_j$, suppose we are given a connected closed and smooth $D_j$-subgroup scheme $H_{_{j,\kappa}} \subset \Gfr_{_{D_j,\kappa}}$. By Theorem \ref{first}, $H_{_{j,\kappa}}$ extends to $D_j$ as a connected, closed affine smooth subgroup scheme 
 \begin{equation} \mathfrak{H}_{_j} \subset \Gfr|_{D_{_j}}.
 \end{equation}
 By \cite[Theorem 3.2 (1), (3), (5), Lemma 3.1 (2)]{mayeux},  the  dilatation   
\begin{equation} \Gfr^{\mathfrak{H}_{_j}}
\end{equation} of $\Gfr$ along $\mathfrak{H}_{_j}$ is a connected, smooth, affine group scheme on $X$ with an affine morphism
\begin{equation}
\Gfr^{\mathfrak{H}_{_j}} \rightarrow \Gfr,
\end{equation}
which is an isomorphism outside of $X_{_j}$. Let \begin{equation} \Gfr^{{\mathfrak{H}}} \ra \Gfr 
\end{equation} be the smooth affine group scheme obtained by  dilatations along each $\mathfrak{H}_{_j}$ taken one by one in any order. Since $\Gfr^{{\mathfrak{H}}}$ is affine with connected fibers, by considering its global section we check that it also interpolates the generic dilatation groups on $X_{_i}$, i.e., with notations as above we have


\begin{equation}
\Gfr^{{ \mathfrak{H}}}|_{_{X_{_i}}} \simeq \Gfr^{{\mathfrak{H}_{_i}}} |_{_{X_{_i}}}.
\end{equation} 
This completes the construction. This group scheme is unique by Theorem \ref{uniquenessraynaud}.


\subsection{Relation with $\mathfrak{G}_{(f_0,\cdots,f_n)} \ra X$ of \cite{bp}} \label{crosscheckbigcell} Let $\widehat{U}_{_{\text{ht} \leq 1}}$ denote the pushout of $X_{_o}$ and the $X_{_i}$. By \cite[Section 6.2, Proposition 6.4 (d)]{blr}, the restriction $\Gfr^{{\mathfrak{H}}}|_{\widehat{U}_{_{\text{ht} \leq 1}}}$ 
extends to an open subset, say, 
\begin{equation} \label{bigopen} U_{_{\text{ht} \leq 1}} \subset X
\end{equation}  containing  $\widehat{U}_{_{\text{ht} \leq 1}}$. The complement of $U_{_{\text{ht} \leq 1}}$ in $X$ has codimension two. By definition, we have a canonical isomorphism between the restrictions of $\Gfr^{{\mathfrak{H}}}$ and $\mathfrak{G}_{(f_0,\cdots,f_n)}$ to ${U}_{_{\text{ht} \leq 1}}$. Further this isomorphism respects the big-cell structures on both group schemes.
Since both group schemes are smooth with connected fibers and extend to $X$, this isomorphism extends to all of $X$ by Theorem \ref{uniquenessraynaud}. Further the isomorphism between the big-cells extends as well.

\section{Proof of \eqref{first} when $\text{dim}(X) = 2$} \label{dim=2}
Subscripts will denote restrictions or to remind us of the place of definition. Thus the $H(\kappa)$ of \cite{yu} is denoted $H_{_\kappa}$.

\begin{proof}
We now place ourselves in the setting of J.-K. Yu \cite[Lemma 8.1.2, page 250]{yu} where we assume that all the concave functions $f$ we will work with are such that $f(0) = 0$.  Let $D$ be a {\it single} smooth divisor with generic point 
\begin{equation} \label{genericpoint} \zeta=\text{Spec}(\kappa).
\end{equation} We denote the completed local ring at the generic point $\zeta$ by
\begin{equation} \label{tubularO}
\cO:= \hat{\cO}_{_{X,\zeta}}.
\end{equation} 
We will identify $\text{Spec}(\kappa)$ with the closed point of $\mathcal{O}$.
Let the restriction $\Gfr|_{\text{Spec}(\cO)}$ correspond to the {\it starting} concave function
\begin{equation}\label{psietc} \psi. \end{equation}
We have replaced the earlier $f_s$ by $\psi$ to avoid excessive subscripts.
Let $\Gfr_{_{\kappa}}$ denote the restriction of $\Gfr_{\psi,\cO}$ to $\kappa$. Let $\mathfrak{T}_{_{\kappa}}$ denote the restriction of the split maximal toral subgroup scheme $\mathfrak{T} \hookrightarrow \mathfrak{G}_{_{\psi,\cO}}$ to $\zeta$.

Let $\mathfrak{G}^{^{\tt u}}_{_{\kappa}} \triangleleft \mathfrak{G}_{_{\kappa}}$ be the unipotent radical. The first goal is to prolongate the normal subgroup $\mathfrak{G}^{^{\tt u}}_{_{\kappa}}$,  to $D$ as a {\em closed normal subgroup scheme with connected fibers} of $\Gfr_D$. 

For $a \in \boldsymbol{\Phi}$, let $U_{_{\kappa,a}}$ be the corresponding root group. 
The action of the roots of $\mathfrak{T}_{_{\kappa}}$ on $\mathfrak{G}^{^{\tt u}}_{_{\kappa}}$ give rise to a {\it close} subset (see \cite[1.1.4]{bt2})
\begin{equation}\label{phigtu}\boldsymbol{\Phi}_{_G}^{^{\tt u}} \subset \boldsymbol{\Phi},\end{equation}
which means that, for $a$ and $b$ in $\boldsymbol{\Phi}_{_G}^{^{\tt u}}$, the positive cone generated by $a$ and $b$ in $\boldsymbol{\Phi}$ is contained in $\boldsymbol{\Phi}_{_G}^{^{\tt u}}$. Thus, $\boldsymbol{\Phi}_{_G}^{^{\tt u}} \subset \boldsymbol{\Phi}$ correspond to the set of roots such that the root groups $U_{_{\kappa,a}}$ define the unipotent radical  $\mathfrak{G}^{^{\tt u}}_{_{\kappa}}$. We therefore have an exact sequence:
\begin{equation}\label{leviexactseq}
(1) \ra \mathfrak{G}^{^{\tt u}}_{_{\kappa}} \ra \Gfr_{_{\kappa}} \to \mathfrak{G}^{^{\tt red}}_{_{\kappa}} \to (1)
\end{equation}
where $\mathfrak{G}^{^{\tt red}}_{_{\kappa}}$ is the canonical reductive quotient.

Let us denote the neutral component of the schematic closure of $\mathfrak{G}^{^{\tt u}}_{_{\kappa}}$ (resp. $U_{_{\kappa,a}}$) in $\mathfrak{G}_{_{D}}:=\mathfrak{G}|_{_{D}}$ by 
\begin{equation} \label{HUnipotentExtnD}  \left(\mathfrak{G}^{^{\tt u}}_{_{D}} \right)^{^{\tt 0}} \quad \left(\text{resp.} \quad  \mathfrak{U}^{^{\tt 0}}_{_{D,a}} \right).  
\end{equation}
Since $D$ is smooth of $\text{dim}~1$, these extensions are flat. Hence, by \cite[1.2.7]{bt2}, these are closed subgroup schemes of $\mathfrak{G}_{_{D}}$ and hence {\em affine}.  

We need to get {\em smoothness}. The direct sum decomposition
\begin{equation} 
\text{Lie}~\Gfr^{^{\tt u}}_{_{\kappa}} = \bigoplus_{a \in \boldsymbol{\Phi}_{_G}^{^{\tt u}}} \text{Lie}~U_{_{\kappa,a}},
\end{equation} 
extends to a direct sum decomposition of locally-free $\cO_{_{D}}$-modules 
\begin{equation}
\text{Lie}~ \left( \mathfrak{G}^{^{\tt u}}_{_{D}}\right)^{^{\tt 0}} = \bigoplus_{a \in \boldsymbol{\Phi}_{_G}^{^{\tt u}}} \text{Lie}~\mathfrak{U}^{^{\tt 0}}_{_{D,a}}.
\end{equation} 

We now apply Definition \ref{Wfunctor} (see also \cite[\S 1.4.1]{bt2}) from the category of {\it nilpotent} locally-free $\cO_{_{D}}$-modules of finite rank to the category of unipotent $D$-group schemes (see \cite[\S 5.2]{bp} for properties) to the above to get an isomorphism of unipotent $D$-group schemes
\begin{equation*}
 \left(\mathfrak{G}^{^{\tt u}}_{_{D}} \right)^{^{\tt 0}}= W \left(\text{Lie}~\left(\mathfrak{G}^{^{\tt u}}_{_{D}} \right)^{^{\tt 0}} \right)  = W \left(\oplus_{a \in \boldsymbol{\Phi}_{_G}^{^{\tt u}}} \text{Lie}~\mathfrak{U}^{^{\tt 0}}_{_{D,a}}  \right)= \prod_{a \in \boldsymbol{\Phi}_{_G}^{^{\tt u}}} W \left(\text{Lie}~\mathfrak{U}^{^{\tt 0}}_{_{D,a}} \right) = \prod_{a \in \boldsymbol{\Phi}_{_G}^{^{\tt u}}}  \mathfrak{U}^{^{\tt 0}}_{_{D,a}}.
\end{equation*} 
Since for any $a \in \boldsymbol{\Phi}$, $\text{Lie}~\mathfrak{U}^{^{\tt 0}}_{_{D,a}}$ are rank one locally-free $\cO_{_{D}}$-modules, $ \mathfrak{U}^{^{\tt 0}}_{_{D,a}}$ are smooth group schemes. Whence, $ \left(\mathfrak{G}^{^{\tt u}}_{_{D}} \right)^{^{\tt 0}}$ is also {\em smooth}, and also {\ul{ closed}} \& hence {\em affine} as seen above. 

The Lie algebra bundle $\bigoplus_{a \in \boldsymbol{\Phi}_{_G}^{^{\tt u}}} \text{Lie}~U_{_{D,a}}$ is an ideal in $\text{Lie}~ \mathfrak{G}_{_D}$ over  $U_{_{\text{ht} \leq 1}} \cap D$ \eqref{bigopen} and hence on $D$. Thus $ \left(\mathfrak{G}^{^{\tt u}}_{_{D}} \right)^{^{\tt 0}}$ is a normal subgroup scheme of $\mathfrak{G}_{_D}$.

Since we are over $D$ which smooth of $\text{dim}~1$, by Theorem \ref{rem9.3}, the quotient $\Gfr_{_D}/ \left(\mathfrak{G}^{^{\tt u}}_{_{D}} \right)^{^{\tt 0}}$ is representable by a group scheme, which we denote by 
$$\Gfr_{_D}^{^{\tt r\'edg\'en}},$$
to indicate that it is only {\it generically reductive}. This is a connected smooth affine group scheme over $D$ which prolongates the reductive quotient \eqref{leviexactseq} $\Gfr_{_{\kappa}}^{^{red}}$ and we get an exact sequence:
\begin{equation}\label{leviexactseqD}
(1) \ra \left( \mathfrak{G}^{^{\tt u}}_{_{D}} \right)^{^{\tt 0}} \ra \Gfr_{_{D}} \to \Gfr_{_D}^{^{\tt r\'edg\'en}} \to (1).
\end{equation}

As in \eqref{Hkappa}, let $H_{_\kappa} \subset \Gfr_{_{\psi,\kappa}}$ be a connected Zariski closed subgroup  for dilatation of $\Gfr_{_{\psi, \cO}}$ and let 
\begin{equation}\label{epsilon} \epsilon
\end{equation}
 be the concave function one {\it ends} with after dilatation along $H_{_\kappa}$. 
 Thus, 
 \begin{equation}\label{Hasimage} H_{_\kappa}= \text{Image} \left( \Gfr_{_{\epsilon,\cO}} \ra \Gfr_{_{\psi,\cO}} \right)|_{\kappa}.
 \end{equation}
The goal is to show that the identity component of its schematic closure is a {\em smooth, affine and closed} subgroup scheme of $\Gfr_{_D}$. 

Since the morphism $\Gfr_{_{\epsilon,\cO}} \ra \Gfr_{_{\psi, \cO}}$ respects the big-cell structure, the roots of $H_{_\kappa}$ relative to $\mathfrak{T}_{_{\kappa}}$ may be identified with  \begin{equation} \label{rootHkappa}
\boldsymbol{\Phi}_{_{H_{_\kappa}}} :=  \big\{ a \in \boldsymbol{\Phi} | \epsilon(a)=\psi(a) \big\}.
\end{equation}  Let $H^{^{\tt u}}_{_{\kappa}}$ denote the {\em unipotent radical} of $H_{_{\kappa}}$ with root system
\begin{equation}
\boldsymbol{\Phi}_{_H}^{^{\tt u}} \subset \boldsymbol{\Phi}_{_{H_{_\kappa}}}.
\end{equation} 

For $a \in \boldsymbol{\Phi}_{_H}^{^{\tt u}}$, let $U_{_{\kappa,a}}$ denote the corresponding root subgroup.
Let us denote the neutral component of the schematic closure of $H^{^{\tt u}}_{_{\kappa}}$ (resp. $U_{_{\kappa,a}}$) in $\mathfrak{G}_{_{D}}:=\mathfrak{G}|_{_{D}}$ by 
\begin{equation}   \left(\mathfrak{H}^{^{\tt u}}_{_{D}} \right)^{^{\tt 0}} \quad \left(\text{resp.} \quad  \mathfrak{U}^{^{\tt 0}}_{_{D,a}} \right).  
\end{equation} 
Since $D$ is smooth of $\text{dim}~1$, these extension are flat. Hence, by \cite[1.2.7]{bt2}, these are closed subgroup schemes of $\mathfrak{G}_{_{D}}$ and hence {\em affine}.  

The {\em smoothness} of $\left(\mathfrak{H}^{^{\tt u}}_{_{D}} \right)^{^{\tt 0}}$ follows exactly from the arguments for $\left(\mathfrak{G}^{^{\tt u}}_{_{D}} \right)^{^{\tt 0}} $.

Let $H^{^{\tt red}}_{_{\kappa}}$ denote the {\em reductive or Levi quotient} of $H_{_{\kappa}}$; we may view this  as a connected, closed subgroup scheme of $\mathfrak{G}_{_{\psi,\kappa}}$ by taking a splitting of the Levi quotient. Let $\boldsymbol{\Phi}_{_H}^{^{\tt red}}$ denote the root system of $H^{^{\tt red}}_{_{\kappa}}$. We denote by
$$ \left(\mathfrak{H}_{_D}^{^{\tt r\'edg\'en}} \right)^{^{\tt 0}}$$ 
the neutral component of the schematic closure of $H^{^{\tt red}}_{_{\kappa}}$ in $\mathfrak{G}_{_{D}}$.  As before, it is flat because $D$ is smooth of $\text{dim}~1$ and hence a group scheme by \cite[1.2.7]{bt2}. 

Since $\mathfrak{T}_{_{\kappa}} \hookrightarrow H^{^{\tt red}}_{_{\kappa}}$ is the split maximal torus of $\mathfrak{G}_{_{\kappa}}$,  we may choose the split maximal torus   $$\mathfrak{T}_{_{D}} \hookrightarrow \mathfrak{G}_{_{D}}$$
to  extend $\mathfrak{T}_{_{\kappa}}$. The big cell of $ \left(\mathfrak{H}_{_D}^{^{\tt r\'edg\'en}} \right)^{^{\tt 0}}$ identifies with 
 $$ \mathfrak{T}_{_D} \times \prod_{a \in \boldsymbol{\Phi}_{_H}^{^{\tt red}}}  \mathfrak{H}^{^{\tt 0}}_{_{D,\tt u,a}}.$$
 
 
 Since $\mathfrak{T}_{_D}$ and $ \mathfrak{H}^{^{\tt 0}}_{_{D,\tt u,a}}$ are smooth, \ $\left(\mathfrak{H}_{_D}^{^{\tt r\'edg\'en}} \right)^{^{\tt 0}}$ is also \ul{smooth} by \cite[2.2.5 (ii)]{bt2} and \ul{affine} by \cite[2.2.5 (iii)]{bt2}. Let us  denote the neutral component  of the schematic closure of $H_{_{\kappa}}$ in $\mathfrak{G}_{_{D}}$ by
\begin{equation} \label{H_kextD}  \mathfrak{H}^{^{\tt 0}}_{_D}.
\end{equation}  
It is flat hence a group scheme and therefore a subgroup scheme of $\mathfrak{G}_{_{D}}$. Further,  scheme-theoretically, $ \mathfrak{H}^{^{\tt 0}}_{_D}$ is a product of $\left(\mathfrak{H}_{_D}^{^{\tt r\'edg\'en}} \right)^{^{\tt 0}}$ and $ \left(\mathfrak{H}^{^{\tt u}}_{_{D}} \right)^{^{\tt 0}}$. Whence, it is  affine, connected, closed and smooth.
 
Finally, we deduce the big-cell structure. Gluing the group schemes $G \times X_{_o}$ with those on $X_{_0}$ and $X_{_1}$ given by the  concave functions $f_{_0}$ and  $\epsilon$ via $t_i \in T \left(X_{_o} \times_X X_{_i} \right)$ for $i=0,1$, we have a BT group scheme on the open subset $U_{_{\text{ht} \leq 1}}$ \eqref{bigopen} together with the big-cell structure given by root subgroup schemes corresponding to $r \in \boldsymbol{\Phi}$ and the prolongation to $U_{_{\text{ht} \leq 1}}$ of the maximal torus $T$ of $G$. We denote this group scheme as
\begin{equation}
\mathfrak{G}_{_{{\text{ht} \leq 1}}}.
\end{equation}
This group scheme is isomorphic to the restriction to $U_{_{\text{ht} \leq 1}}$ of the dilatation of $\mathfrak{G}$ by $ \mathfrak{H}^{^{\tt 0}}_{_D}$. Taking schematic closure of the root subgroups of  $\mathfrak{G}_{_{_{\text{ht} \leq 1}}}$ we get the sought big-cell structure.
 
\end{proof}


\section{When $\text{dim}(X) \geq 3$}
When $\text{dim}(X) = 2$, counter-examples in \cite[3.2.14 and 3.2.15]{bt2} show that taking schematic closures does not automatically prolongate the group structure to the limit. However, in this setting since the divisor $D$ corresponds to a Dedekind ring, taking schematic closures will be {\em flat} once generically so. Thus, smooth prolongations of smooth subgroup schemes are obtained relatively easily.

As can be imagined, novel issues emerge when the dimension of $X$ is strictly bigger than $2$. Since the divisor $D$ is now of dimension at least $2$, schematic closures are harder to manoeuvre, primarily due to problems arising out of {\em flatness}. The case handled in \cite[\S 6, \S 7]{bp} proves only quasi-affineness. In our framework, the degeneration along the divisor $D$ is of a maximal rank subgroup scheme $H_{_{\kappa}}$ of a {\em non-reductive} group scheme $\mathfrak{G}_{_{D}}$ and we seek a prologation as a  closed connected smooth subgroup scheme to reach our goal of proving affineness. 

\subsubsection{Towards the completion  of Theorem \eqref{first}}
\begin{proof}
With notations as in Theorem \ref{mt}, let $\{D_{_i} \}_{i=0}^n$ denote the set of divisors. By \S \ref{redtoonedivisor}, we may assume that the complement of $X_o \subset X$ is a {\it single} smooth divisor $Y:=D_{_n} \subset X$  and we work with the generic point $\zeta$ of $Y$. We denote the completed local ring at the generic point $\zeta$ by
\begin{equation} \label{tubularO1}
\cO:= \hat{\cO}_{_{X,\zeta}}
\end{equation} 
and the residue field by $\kappa$.
We remark that $Y$ in turn is a smooth scheme with normal crossing divisors given, for $i=0,\cdots, n-1$, by 
\begin{equation} \label{divisorsD} 
Y \cap D_{_i}. 
\end{equation}
We denote the generic point of the component $Y \cap D_{_i}$ by
\begin{equation} \zeta'_{_i}.
\end{equation} Let us set 
\begin{eqnarray}\label{basicnots} B_{_i} & := & \mathcal O_{_{Y,\zeta'_{_{i}}}}, \\ 
 Y_{_i} & := & \spec \left(\hat{B}_{_i} \right), \\ 
Y_{_o} & := & Y - \cup_{_{i = 0}}^{^{n-1}} \{Y \cap D_{_i}\}, \\
Y^{^{(\i)}} & := & Y_{_o} \bigcup \cup_{_{i = 0}}^{^{n-1}} \{ {Y}_{_i} \}.
\end{eqnarray} 
The codimension of the complement of $Y^{^{(\i)}}$ is at least two in $Y$.
 
Thus, with $Y$, we are again in the setting of Theorem \ref{mt}. We may formulate the induction hypothesis that Theorem \ref{mt} holds for all smooth schemes of dimension less than that of $X$. Section \ref{dim=2} constitutes the base case of this induction on dimension. Since $\text{dim}(Y) < \text{dim}(X)$, we may assume that Theorem \ref{mt} holds for the above configuration.

Let the notations $\kappa$, $\cO$, $\psi$, $\epsilon$, $\mathfrak{G}$, $\mathfrak{T} \hookrightarrow \mathfrak{G}$, $\mathfrak{G}_{_{\kappa}}$, $\mathfrak{T}_{_{\kappa}}$,  $H_{_{\kappa}}$, $H^{^{\tt u}}_{_{\kappa}}$, $H^{^{\tt red}}_{_{\kappa}}$, $\boldsymbol{\Phi}_{_{H_{_{\kappa}}}}=\boldsymbol{\Phi}_{_H}^{^{\tt u}} \cup \boldsymbol{\Phi}_{_H}^{^{\tt red}}$ be as in the previous section with the only difference that they are adapted to the case $\dim(X) \geq 3$. As before, to complete the proof of \eqref{first}, our aim is to construct a {\em prolongation} of $H_{_{\kappa}}$ to an {\em affine, closed, connected and smooth} $Y$-subgroup scheme of $\Gfr_{_Y}$. We achieve this in \eqref{final equation}.


Our starting point is the group scheme constructed \S \ref{par-are-affine}.  Thus, $\mathfrak{G}$ has a \ul{big-cell structure} given by:
\begin{equation}
\mathfrak{T} \times \prod_{a \in \boldsymbol{\Phi}} \mathfrak{U}_{_{a}},
\end{equation}
where the $\mathfrak{U}_{_{a}}$ are \ul{connected}. 


\subsubsection{Step 1: {\bf{The prolongation of the unipotent radical and a Levi-like decomposition of $\mathfrak{G}_{_{Y}}$}}}  \label{section1} 
\noindent

The first aim is to prolongate the normal subgroup $\mathfrak{G}^{^{\tt u}}_{_{\kappa}} \triangleleft \mathfrak{G}_{_{\kappa}}$,  to $Y$ as a {\em closed connected normal subgroup scheme} of $\Gfr_Y$.

The action of the roots of $\mathfrak{T}_{_{\kappa}}$ on $\mathfrak{G}^{^{\tt u}}_{_{\kappa}}$ give rise to a {\it close} subset (see \cite[1.1.4]{bt2})
$$\boldsymbol{\Phi}_{_G}^{^{\tt u}} \subset \boldsymbol{\Phi}.$$
Set
\begin{equation} \label{gdu1}
\mathfrak{G}^{^{\tt u}}_{_{Y}} := \prod_{a \in \boldsymbol{\Phi}_{_G}^{^{\tt u}}} \mathfrak{U}_{_{a,Y}}.
\end{equation}
This is a flat subgroup scheme of $\mathfrak{G}_{_Y}$. Although we do not know yet if it is  {\em closed} and {\em normal} in $\mathfrak{G}_{_Y}$, it has the property that its restriction to any $Y_{_i}$ is a closed normal subgroup scheme.   Let us denote the quotient group scheme by
\begin{equation}\label{theunipquot}
 \mathfrak{G}^{^{\tt /u}}_{_{Y_{_i}}} := \mathfrak{G}_{_{Y_{_i}}} / \mathfrak{G}^{^{\tt u}}_{_{Y_{_i}}}.
\end{equation}
We now go on to show that the connected group scheme $\mathfrak{G}^{^{\tt /u}}_{_{Y_{_i}}}$ helps us prolongate the Levi decomposition of $\mathfrak{G}_{_\kappa}$ to $\mathfrak{G}_{_{Y_{_i}}}$.

From $\mathfrak{G}_{_{Y_{_i}}}$, the group scheme $\mathfrak{G}^{^{\tt /u}}_{_{Y_{_i}}}$ acquires the following cell-structure 
\begin{equation} \label{cellstructure}
\mathfrak{T} \mid_{_{Y_{_i}}} \times \prod_{a \in \boldsymbol{\Phi} - \boldsymbol{\Phi}_{_G}^{^{\tt u}}} \mathfrak{U}_{_{a}} \mid_{_{Y_{_i}}},
\end{equation}
which is compatible with the inclusion $\mathfrak{G}^{^{\tt u}}_{_{Y_{_i}}} \hookrightarrow \mathfrak{G}_{_{Y_{_i}}}$.

Consider the natural quotient morphism $$q_{_{\kappa}}: \mathfrak{G}_{_{\kappa}} \ra \mathfrak{G}^{^{\tt red}}_{_{\kappa}}$$ and let us fix a Levi-splitting
$$\lambda_{_{\kappa}}: \mathfrak{G}^{^{\tt red}}_{_{\kappa}} \ra \mathfrak{G}_{_{\kappa}}.$$
Let us denote the connected component of the schematic closure of $\lambda_{_{\kappa}} \left(\mathfrak{G}^{^{\tt red}}_{_{\kappa}} \right)$ in $\mathfrak{G}_{_{Y_{_i}}}$ as 
\begin{equation}\label{redgen}
\mathfrak{G}^{^{\tt r\'edg\'en}}_{_{Y_{_i}}}.
\end{equation}
The superscript `{\tt r\'edg\'en}' signifies {\it generically reductive}. By construction, the group scheme $\mathfrak{G}^{^{\tt r\'edg\'en}}_{_{Y_{_i}}}$ is \ul{flat} and  inherits a cell-structure from $\mathfrak{G}_{_{Y_{_i}}}$ which is isomorphic to \eqref{cellstructure}.

Consider the product homomorphism of group schemes on $Y_{_i}$
\begin{equation}
\pi: \mathfrak{G}^{^{\tt r\'edg\'en}}_{_{Y_{_i}}} \times 
\mathfrak{G}^{^{\tt u}}_{_{Y_{_i}}} \ra \mathfrak{G}_{_{Y_{_i}}}.
\end{equation}
When restricted to the cells of each of the group schemes, $\pi$ becomes an isomorphism. 
It follows by Theorem \ref{schcopen} that the above morphism is an isomorphism onto an open subgroup scheme of $\mathfrak{G}_{_{Y_{_i}}}$. Since $\mathfrak{G}_{_{Y_{_i}}}$ is connected, it follows that we have an isomorphism. This shows also that $\mathfrak{G}^{^{\tt r\'edg\'en}}_{_{Y_{_i}}}$ via $\mathfrak{G}_{_{Y_{_i}}}$ maps isomorphically onto $\mathfrak{G}^{^{\tt /u}}_{_{Y_{_i}}}$. In other words, the Levi decomposition $$\Gfr_{_{\kappa}} = \Gfr^{^{\tt u}}_{_{\kappa}} \ltimes \Gfr^{^{\tt red}}_{_{\kappa}}$$ over $\kappa$ prolongates to $Y_{_{i}}$ for every $i$, and hence to the whole of $Y^{^{(\i)}} \subset Y$. We denote this prolongated Levi decomposition as follows 
\begin{equation}\label{semidoverdvr} \Gfr_{_{Y}}|_{_{Y^{^{(\i)}}}} = \Gfr^{^{\tt u}}_{_{Y}}|_{_{Y^{^{(\i)}}}} \ltimes \Gfr_{_{Y}}^{^{\tt r\'edg\'en}}|_{_{Y^{^{(\i)}}}}. \end{equation}


We denote the quotient by
\begin{equation}
q_{_{Y^{^{(\i)}}}}: \mathfrak{G}_{_Y}|_{_{Y^{^{(\i)}}}} \rightarrow \Gfr_{_Y}^{^{\tt r\'edg\'en}} |_{_{Y^{^{(\i)}}}}.\end{equation}

Observe the restrictions $\mathfrak{G}_{_{Y_{_o}}}$ and $\mathfrak{G}_{_{Y_{_i}}}$ to $Y_{_o} \times_Y Y_{_i}$ are glued by transition functions that preserve the cell-structures. Therefore they lie in $T \left(Y_{_o} \times_Y Y_{_i} \right)$. We denote them by $s_{_i}$.  By the induction hypothesis, on $Y$, we have the {\em affine, connected and smooth} group scheme 
\begin{equation} \Gfr_{_Y}^{^{\tt r\'edg\'en}}
\end{equation}
interpolating the generically reductive quotients $\mathfrak{G}^{^{\tt r\'edg\'en}}_{_{Y_{_i}}}$ and $\mathfrak{G}^{^{\tt red}}_{_{\kappa}}$ of 
$\mathfrak{G}_{_{Y_{_i}}}$ and  $\mathfrak{G}_{_{\kappa}}$ respectively using $s_{_i}$ as gluing function. 


 Since $\mathfrak{G}_{_{Y}}$ is smooth and the codimension of the complement of $Y^{^{(\i)}}$ is at least two, by  Theorem~\ref{Corollaire IX. 1.4}, the morphism $q_{_{Y^{^{(\i)}}}}$ extends uniquely to $Y$. We denote the extension as
$$q_{_{Y}}: \mathfrak{G}_{_Y} \rightarrow \Gfr_{_Y}^{^{\tt r\'edg\'en}}.$$
Now, the group schemes $\text{ker} \left(q_{_{Y}} \right)$ and $\mathfrak{G}^{^{\tt u}}_{_{Y}}$ \eqref{gdu1} both prolongate $\text{ker} \left(q_{_{Y^{^{(\i)}}}} \right)$ which is smooth being isomorphic to $\prod_{a \in \boldsymbol{\Phi}_{_G}^{^{\tt u}}} \mathfrak{U}_{_{a,Y^{^{(\i)}}}}$. Thus, by Theorem \ref{uniquenessraynaud}, we deduce that
\begin{equation}
\mathfrak{G}^{^{\tt u}}_{_{Y}}=\text{ker}\left(q_{_{Y}} \right).
\end{equation}
This achieves the two properties of {\it closedness and normality} of the subgroup scheme $\mathfrak{G}^{^{\tt u}}_{_{Y}}$ of $\mathfrak{G}_{_{Y}}$.


Taking schematic closures over height $1$ primes, we may extend $\lambda_{_{\kappa}}$ to  
$$\lambda_{_{Y^{^{(\i)}}}}: \Gfr_{_Y}^{^{\tt r\'edg\'en}}|_{_{Y^{^{(\i)}}}} \rightarrow \mathfrak{G}_{_{Y}}|_{_{Y^{^{(\i)}}}}$$ over $Y^{^{(\i)}}$.

Since $ \Gfr_{_Y}^{^{\tt r\'edg\'en}}$ is smooth and the codimension of the complement of $Y^{^{(\i)}}$ is at least two, again by Theorem \ref{Corollaire IX. 1.4}, the morphism $\lambda_{_{Y^{^{(\i)}}}}$ extends to 
$$\lambda_{_{Y}}: \Gfr_{_Y}^{^{\tt r\'edg\'en}} \rightarrow \mathfrak{G}_{_{Y}}.$$

Using $\lambda_{_{Y}}$ and the normality of $\mathfrak{G}^{^{\tt u}}_{_{Y}}$, we construct the semi-direct product
\begin{equation}\label{semidoverdvr1}
\Gfr^{^{\tt u}}_{_{Y}} \ltimes \Gfr_{_Y}^{^{\tt r\'edg\'en}} \hra \Gfr_{_Y}.
\end{equation}
This is a smooth subgroup scheme prolongating  $\Gfr_{_{Y^{^{(\i)}}}}$ by \eqref{semidoverdvr}. Thus, $\Gfr_{_{Y^{^{(\i)}}}}$ gets two \ul{connected} smooth prolongations to $Y$. Therefore by Theorem \ref{uniquenessraynaud}, it follows that \eqref{semidoverdvr1} is an isomorphism of group schemes.

\subsubsection{ Step 2: {\bf Prolongation of $H^{^{\tt red}}_{_{\kappa}} \hookrightarrow \mathfrak{G}^{^{\tt red}}_{_{\kappa}}$ to $Y$ as a closed connected and smooth subgroup scheme.}} \label{step2}
Set $\mathfrak{G}_{_{\psi}}:= \mathfrak{G}_{_\mathcal{O}}$ (see  \eqref{tubularO1}). 
Let $\mathfrak{G}_{_{\epsilon}}$ denote the dilatation of 
$\mathfrak{G}_{_{\psi}}$ along $H_{_{\kappa}}$. 
We get the morphism
$$\mathfrak{G}_{_{\epsilon}} \ra \mathfrak{G}_{_{\psi}}$$
of group schemes over $\mathcal{O}$. Over $\Spec(\kappa)$, this gives
$$\mathfrak{G}_{_{\epsilon, \kappa}} \ra H_{_{\kappa}} \hookrightarrow \mathfrak{G}_{_{\psi, \kappa}}.$$

By Theorem \eqref{mcninch} a) and b), we have
\begin{eqnarray}
\mathfrak{G}^{^{\tt red}}_{_{\epsilon,\kappa}}= H^{^{\tt red}}_{_{\kappa}}, \\
\mathfrak{G}^{^{\tt red}}_{_{\epsilon,\kappa}} \hookrightarrow \mathfrak{G}^{^{\tt red}}_{_{\kappa}}.
\end{eqnarray}
This is an inclusion of a {\em maximal rank subgroup} in the reductive group $\mathfrak{G}^{^{\tt red}}_{_{\kappa}}$. Hence, by Theorem \ref{mcninch} c), we get
$$H^{^{\tt red}}_{_{\kappa}} =  C_{_{\mathfrak{G}^{^{\tt red}}_{_{\kappa}}}} \left(Z \left(H^{^{\tt red}}_{_{\kappa}} \right)\right)^{^o}.$$

Let us have the notation:
\begin{equation}
Z_{_{\kappa}}:= Z\left( H^{^{\tt red}}_{_{\kappa}} \right).
\end{equation} 
Thus, $Z_{_{\kappa}}$ is a subgroup scheme of $\mathfrak{T}_{_{\kappa}}$. Since $\mathfrak{T}_{_{\kappa}}$ extends to $Y$ as a split maximal torus, $Z_{_{\kappa}}$ extends as well. We denote the extension as 
\begin{equation}
Z_{_{Y}} \hookrightarrow \mathfrak{T}_{_{Y}}.
\end{equation}
Taking centralizers, we now set 
\begin{equation} \label{HredgenY}
\mathfrak{H}_{_Y}^{^{\tt r\'edg\'en}}
 :=C_{_{\Gfr_{_Y}^{^{\tt r\'edg\'en}}}} \left(Z_{_{Y}} \right)^{^o}.
\end{equation} 
Since $\Gfr_{_Y}^{^{\tt r\'edg\'en}}$ is a smooth affine $Y$-group scheme and $Z_{_{Y}}$ is a subgroup scheme of multiplicative type, by Theorem \ref{sixthree}
\begin{equation} 
\mathfrak{H}_{_Y}^{^{\tt r\'edg\'en}} \hookrightarrow \Gfr_{_Y}^{^{\tt r\'edg\'en}}
\end{equation} 
is a {\em closed and smooth subgroup scheme}.

\subsubsection{ Step 3: {\bf An embedding of $H^{^{\tt u}}_{_{\kappa}}$ into a closed subgroup scheme  $N_{_{Y}}$ of $\mathfrak{G}_{_{Y}}$}} \label{step3}

Set 
\begin{equation}
\boldsymbol{\Phi}_{_H}^{^{\tt u}}:= \text{roots of $\mathfrak{T}_{_{\kappa}}$-action on $H^{^{\tt u}}_{_{\kappa}}$}.
\end{equation}
Thus, $$\boldsymbol{\Phi}_{_H}^{^{\tt u}} \subset \boldsymbol{\Phi}$$
 is close. Set
 \begin{equation}
 \mathfrak{H}^{^{\tt u}}_{_{Y}}:= \prod_{a \in \boldsymbol{\Phi}_{_H}^{^{\tt u}}} \mathfrak{U}_{_{a,Y}}.
 \end{equation}
Note that  $$\mathfrak{H}^{^{\tt u}}_{_{\kappa}} =  H^{^{\tt u}}_{_{\kappa}}.$$

\`A priori, this may not be a closed subgroup scheme of $\mathfrak{G}_{_{Y}}$. However,
$$\text{Lie}~\mathfrak{H}^{^{\tt u}}_{_{Y}} \subset \text{Lie}~ \mathfrak{G}_{_{Y}},$$
is a {\em direct summand}. For the adjoint action, define the functor
\begin{equation}
N_{_{Y}}:= \text{Stab}_{_{\mathfrak{G}_{_{Y}}}}  \left( \text{Lie}~\mathfrak{H}^{^{\tt u}}_{_{Y}} \subset \text{Lie}~ \mathfrak{G}_{_{Y}} \right).
\end{equation} 
Observe that 
\begin{equation} \label{isanideal}
\text{Lie}~N_{_{Y}} = \text{Stab}_{_{\text{Lie} \mathfrak{G}_{_{Y}}}}  \left( \text{Lie}~\mathfrak{H}^{^{\tt u}}_{_{Y}}  \right).
\end{equation} 
Thus, $\text{Lie}~\mathfrak{H}^{^{\tt u}}_{_{Y}}$ is an ideal in $\text{Lie}~N_{_{Y}}$. 
In particular, $H^{^{\tt u}}_{_{\kappa}}$ is normal in $N_{_{k}}$. Whence we have the inclusion
\begin{equation} \label{unipotentinclusion}
H^{^{\tt u}}_{_{\kappa}} \triangleleft N^{^{\tt u}}_{_{\kappa}}.
\end{equation}

Now $\text{Lie}~\mathfrak{H}^{^{\tt u}}_{_{Y}}$ is {\em essentially-free}, being a direct-sum of line bundles on $Y$. Whence, by \eqref{exemplee} it follows that 
$$N_{_{Y}} \hookrightarrow \mathfrak{G}_{_{Y}},$$
is a {\em closed subgroup scheme, which however need not be flat}. 

Let $N_{_{\kappa}}$ be the restriction of $N_{_{Y}}$ to the generic point $\spec \kappa$. We now {\em claim} that we have an  embedding of $H_{_{\kappa}}$ in $N_{_{\kappa}}$. Observe that by definition \begin{equation}
N_{_{\kappa}}= \text{Stab}_{_{\mathfrak{G}_{_{\kappa}}}} \left(\mathfrak{h}^{^{\tt u}}_{_{\kappa}} \right), \end{equation}
where  $$\mathfrak{h}^{^{\tt u}}_{_{\kappa}}:=\text{Lie}~H^{^{\tt u}}_{_{\kappa}} \quad \text{which equals} \quad  \text{Lie}~ \mathfrak{H}^{^{\tt u}}_{_{\kappa}}.$$

To prove the claim, we proceed as follows. We begin by recalling the `${\text{Ad}}$' morphism. Let $R$ be an arbitrary $\kappa$-algebra. 

Thus, $$\mathfrak{h}^{^{\tt u}}_{_{\kappa}}(R)= \text{Ker} \left( H^{^{\tt u}}_{_{\kappa}} \left(R[\epsilon] \right) \ra H^{^{\tt u}}_{_{\kappa}}(R) \right).$$
Let $g \in H_{_{\kappa}}(R)$ and $x \in \mathfrak{h}^{^{\tt u}}_{_{\kappa}}(R)$. Embedding $R \ra R[\epsilon]$ as constants, we define
$$i: H_{_{\kappa}}(R) \ra H_{_{\kappa}}(R[\epsilon]).$$
Then conjugating as elements in $H_{_{\kappa}}(R[\epsilon])$ we have $$\text{Ad}(g)(x):= i(g)\cdot x\cdot i(g)^{-1}.$$
Since $H^{^{\tt u}}_{_{\kappa}}$ is normal in $H_{_{\kappa}}$, this element belong to $H^{^{\tt u}}_{_{\kappa}}(R [\epsilon] )$. Since $x$ belongs to $\mathfrak{h}_{_{\kappa}}(R)$, this element also belongs to $\mathfrak{h}_{_{\kappa}}(R)$. Thus,
$$\text{Ad}(g)(x) \in \mathfrak{h}_{_{\kappa}}(R) \cap H^{^{\tt u}}_{_{\kappa}}(R[\epsilon]) \quad \text{which equals} \quad \mathfrak{h}^{^{\tt u}}_{_{\kappa}}(R),$$
showing that $H_{_{\kappa}} $ stabilizes $\mathfrak{h}_{_{\kappa}}^{^{\tt u}}$.
This shows that 
$$H_{_{\kappa}} \hookrightarrow N_{_{\kappa}}$$
is a closed subgroup scheme. 
Hence by the Levi-decompositions for either groups and \eqref{unipotentinclusion}, we deduce 
\begin{equation} \label{redinclusion}
H_{_{\kappa}}^{^{red}} \hookrightarrow N_{_{\kappa}}^{^{red}}.
\end{equation}


\subsubsection{Step 4: {\bf A canonical smooth subgroup scheme of $N_{_{Y}}$}} \label{SmoothReplacementStep}

Consider the quotient $N_{_{\kappa}} \twoheadrightarrow N^{^{\tt red}}_{_{\kappa}}$. We take a Levi-splitting 
\begin{equation}
\lambda_{_{N, \kappa}}: N^{^{\tt red}}_{_{\kappa}} \hookrightarrow N_{_{\kappa}}.
\end{equation} 
By taking schematic closure inside $N_{_{Y^{^{(\i)}}}} :=N_{_{Y}} |_{_{Y^{^{(\i)}}}}$ we get a subgroup scheme  
$$\lambda_{_{N, Y^{^{(\i)}}}}: N_{_{Y^{^{(\i)}}}}^{^{\tt r\'edg\'en}} \hra N_{_{Y^{^{(\i)}}}}.$$

Although $N^{^{\tt red}}_{_{\kappa}}$ is by construction a reductive group, the schematic closure $N_{_{Y^{^{(\i)}}}}^{^{\tt r\'edg\'en}}$ is only a {\em generically reductive group scheme}. By our basic assumption on the prolongation of the split maximal torus, we see that both $N_{_{Y^{^{(\i)}}}}^{^{\tt r\'edg\'en}}$ and $N_{_{Y^{^{(\i)}}}}$ are smooth on $Y^{^{(\i)}}$. Further, the inclusion morphism $\lambda_{_{N, Y^{^{(\i)}}}}$ preserves the cell structure on these group schemes. Hence, the transition functions lie in $T \left( Y_{_o} \times_Y Y_{_i} \right)$ for each $i$. By the induction hypothesis, via the inclusion $Y^{^{(\i)}} \subset Y$, we get
$$N_{_Y}^{^{\tt r\'edg\'en}}$$
as an affine connected smooth group scheme on $Y$ prolongating $N_{_{Y^{^{(\i)}}}}^{^{\tt r\'edg\'en}}$, which by Theorem \ref{uniquenessraynaud} is unique.

Under $\mathfrak{T}_{_{\kappa}}$-action on $N^{^{\tt u}}_{_{\kappa}} \hookrightarrow N_{_{\kappa}}$, let 
$$\boldsymbol{\Phi}_{_N}^{^{\tt u}} \subset \boldsymbol{\Phi}$$
be the corresponding set of roots. Then $\boldsymbol{\Phi}_{_N}^{^{\tt u}} \subset \boldsymbol{\Phi}$ is a close subset. Set
\begin{equation}
\mathfrak{U}_{_{\boldsymbol{\Phi}^{^{\tt u}}_{_{N}},Y}} := \prod_{a \in \boldsymbol{\Phi}^{^{\tt u}}_{_{N}}} \mathfrak{U}_{_{a,Y}}.
\end{equation}

Consider the morphism
\begin{equation} \label{in}  i_{_{N}}: \mathfrak{U}_{_{\boldsymbol{\Phi}^{^{\tt u}}_{_{N}},Y}} \times N_{_Y}^{^{\tt r\'edg\'en}} \ra N_{_{Y}}.
\end{equation} 
Its restriction on $Y^{^{(\i)}}$ is an isomorphism.  Hence $i_{_{N}}$ is an injective morphism.
We set
\begin{equation}\label{thesm}
N_{_{Y}}^{^{sm}} := \text{Img} \left(i_{_{N}} \right).
\end{equation}
Thus, $N_{_{Y}}^{^{sm}}$ is an affine connected and smooth scheme, which however is not yet a {\em group scheme}. 
We set
\begin{equation}\label{inbar}
\overline{i_{_{N}}}: = \text{schematic closure $\overline{i_{_{N}} \left( \mathfrak{U}_{_{\boldsymbol{\Phi}^{^{\tt u}}_{_{N}},Y}} \times N_{_Y}^{^{\tt r\'edg\'en}} \right)}$ of $N_{_{Y}}^{^{sm}}$ in $N_{_{Y}}$}.
\end{equation} 
Thus, $\overline{i_{_{N}}}$ is {\ul{affine}} because $N_{_Y}$ is so. By Theorem \ref{schematiccopen}, the inclusion
$$j: \Img(i_{_{N}}) \subset \overline{i_{_{N}}}$$
is an open immersion. It is also an affine morphism since both $N_{_{Y}}^{^{sm}}$ and  $\overline{i_{_{N}}}$ are affine. 

We now want to show that  $$\text{codim}_{_{\overline{i_{_{N}}}}} \left( \overline{i_{_{N}}} \setminus \Img \left(i_{_{N}} \right) \right) \geq 2.$$ Since the morphism $i_{_{N,Y^{^{(\i)}}}}$ is an isomorphism we have,
$$\text{codim}_{_{\overline{i_{_{N}}}}} \left( \overline{i_{_{N}}} \setminus \Img \left(i_{_{N}} \right) \right) = \text{codim}_{_{N_{_Y}}} \left( \overline{i_{_{N}}} \setminus \Img \left(i_{_{N}} \right) \right) \geq \text{codim}_{_{N_{_Y}}} \left( \overline{i_{_{N}}} \setminus \Img \left(i_{_{N ,Y^{^{(\i)}}}} \right) \right).$$

Since the fiber dimension of the closed $Y$-subscheme $\overline{i_{_{N}}}$ is bounded above by the fiber dimension  of $N_{_{Y}}$, we have

$$\text{codim}_{_{N_{_Y}}} \left( \overline{i_{_{N}}} \setminus \Img \left(i_{_{N ,Y^{^{(\i)}}}} \right) \right)  \geq \text{codim}_{_{N_{_Y}}} \left( N_{_{Y}} \setminus \Img \left(i_{_{N,Y^{^{(\i)}}}} \right) \right).$$

 Although $N_{_Y}$ may not be a flat subgroup scheme of $\mathfrak{G}_{_{Y}}$, all irreducible components of any fiber of $N_{_{Y}}$  have the same dimension by Theorem \ref{ex6b}. Thus we have

$$  \text{codim}_{_{N_{_Y}}} \left( N_{_{Y}} \setminus \Img \left(i_{_{N,Y^{^{(\i)}}}} \right) \right) = \text{codim}_{_{Y}} \left( Y \setminus Y^{^{(\i)}} \right) =2.$$
This proves our claim on the codimension. 

On the other hand, Theorem \ref{van} says that  $$\text{if} \quad \Img \left(i_{_{N}} \right) \neq \overline{i_{_{N}}}, \quad \text{then}  \quad \text{codim}_{_{\overline{i_{_N}}}} \left( \overline{i_{_N}} \setminus \Img \left(i_{_N} \right) \right) =1.$$ By this mismatch, we deduce the equality
$$\Img \left(i_{_{N}} \right) = \overline{i_{_{N}}}.$$
Consequently, 
\begin{equation} \label{Ninc}
N_{_{Y}}^{^{sm}} \hookrightarrow \mathfrak{G}_{_{Y}}
\end{equation} 
is a connected, smooth, {\ul {closed subscheme}}. Since $N_{_{Y}}^{^{sm}}$ is closed \& flat on $Y$ and since its restriction to $Y^{^{(\i)}}$ is a subgroup scheme of $\mathfrak{G}_{_Y}|_{_{Y^{^{(\i)}}}}$,  it acquires a \ul{subgroup scheme structure} from $N_{_{Y}}$ by Theorem \ref{schematiccgroup}.

 Observe that
\begin{equation} \label{Liesm=Lieny}
\text{Lie}~N_{_{Y}}^{^{sm}} = \text{Lie} ~N_{_{Y}}.
\end{equation}

Having achieved this, we will henceforth have the notation:
\begin{equation} \label{notachange}
\mathfrak{N}_{_Y} := N_{_{Y}}^{^{sm}}.
\end{equation}

Applying \S  \ref{section1} to  the affine connected smooth group scheme $\mathfrak{N}_{_{Y}}$ on $Y$, we get the isomorphisms
\begin{eqnarray}
\mathfrak{N}^{^{\tt u}}_{_{Y}}  & = & \mathfrak{U}_{_{\boldsymbol{\Phi}^{^{\tt u}}_{_{N}},Y}}, \\ \label{sdprodnd} 
\mathfrak{N}_{_{Y}}  & = & \mathfrak{N}^{^{\tt u}}_{_{Y}} \ltimes \mathfrak{N}_{_Y}^{^{\tt r\'edg\'en}},
\end{eqnarray}
where as before the superscript {\tt r\'edg\'en} denotes the fact  that the group scheme is {\em generically reductive}. Since $\text{Lie}~ \mathfrak{N}^{^{\tt u}}_{_{Y}}$ is the unipotent radical of $\text{Lie}~ \mathfrak{N}_{_{Y}}$, from \eqref{Liesm=Lieny} and \eqref{isanideal} it follows that we have the inclusion of Lie algebra bundles
\begin{equation} \label{Lieinclusion}
\text{Lie}~ \mathfrak{H}^{^{\tt u}}_{_{Y}} \subset \text{Lie}~ \mathfrak{N}^{^{\tt u}}_{_{Y}}.
\end{equation}

In place of $\mathfrak{G}_{_{Y}}$, applying \S \ref{step2} to the affine connected smooth  $Y$-scheme $\mathfrak{N}_{_{Y}}$, and by \eqref{redinclusion} and \eqref{HredgenY} we have the closed inclusion 
\begin{equation} \label{hdredndred} 
\mathfrak{H}_{_Y}^{^{\tt r\'edg\'en}} \hookrightarrow \mathfrak{N}_{_Y}^{^{\tt r\'edg\'en}}
\end{equation} 
of affine connected smooth group schemes.

\subsubsection{Step 5: {\bf Prolongation of $H^{^{\tt u}}_{_{\kappa}}$ as a closed subgroup scheme of $N^{^{\tt u}}_{_{Y}} $}}

Since $i_{_{N}}$ \eqref{in} restricts to an isomorphism on $Y^{^{(\i)}}$, the inclusion \eqref{Lieinclusion} is a summand. Applying Definition \ref{Wfunctor}, we obtain the closed immersion
\begin{equation} \label{hdundu} 
\mathfrak{H}^{^{\tt u}}_{_{Y}}=W \left( \text{Lie}~ \mathfrak{H}^{^{\tt u}}_{_{Y}}  \right) \hookrightarrow  W \left( \text{Lie}~\mathfrak{N}^{^{\tt u}}_{_{Y}}  \right)  = \mathfrak{N}^{^{\tt u}}_{_{Y}}, \end{equation} 
which realises $\mathfrak{H}^{^{\tt u}}_{_{Y}}$ as  a closed subgroup scheme of $\mathfrak{N}^{^{\tt u}}_{_{Y}}$. 

\subsubsection{Step 6: {\bf Prolongation of $H_{_{\kappa}}$ to $Y$ as a smooth closed subgroup scheme of $\mathfrak{G}_{_{Y}}$}}

We have
\begin{equation}
\mathfrak{H}^{^{\tt u}}_{_{Y}} \times \mathfrak{H}_{_Y}^{^{\tt r\'edg\'en}} \stackrel{\eqref{hdundu}, \eqref{hdredndred}}{\hookrightarrow} \mathfrak{N}^{^{\tt u}}_{_{Y}} \ltimes \mathfrak{N}_{_Y}^{^{\tt r\'edg\'en}} \stackrel{\eqref{sdprodnd}}{=} \mathfrak{N}_{_{Y}}  \stackrel{\eqref{Ninc}, \eqref{notachange}}{\hookrightarrow} \mathfrak{G}_{_{Y}}.
\end{equation} 
Restricting the action of $N_{_Y}^{^{\tt r\'edg\'en}}$ on $N^{^{\tt u}}_{_{Y}}$ to $\mathfrak{H}_{_Y}^{^{\tt r\'edg\'en}}$ leaves $\mathfrak{H}^{^{\tt u}}_{_{Y}}$ stabilized. Thus, the semi-direct product of $N_{_{Y}}$ restricts to the product $\mathfrak{H}^{^{\tt u}}_{_{Y}} \times \mathfrak{H}_{_Y}^{^{\tt r\'edg\'en}}$ allowing us to define
\begin{equation}\label{final equation}
\mathfrak{H}_{_{Y}}: = \mathfrak{H}^{^{\tt u}}_{_{Y}} \ltimes \mathfrak{H}_{_Y}^{^{\tt r\'edg\'en}}.
\end{equation} 
The above steps realise $\mathfrak{H}_{_{Y}}$ as a connected closed smooth subgroup scheme of $\mathfrak{G}_{_{Y}}$, which is also affine. This provides the desired prolongation of $H_{_\kappa}$.

The existence of the big-cell structure on the dilatation of $\Gfr$ by $\mathfrak{H}_{_Y}$ is as in dimension two. This completes the proof of \eqref{first}.
 
 \end{proof}

 \subsubsection{On the assumption of perfectness of the residue field}
 We make some remarks regarding this hypothesis in the articles \cite{bt2}, \cite{yu}, \cite{base} and \cite{bp}. The main results of \cite{bt2}  do not require the residue field to be perfect. In the framework of \cite{bp} and in this work, the residue fields of the local rings of divisors such as $A_{_i}=\mathcal O_{_{X, \zeta_{_{i}}}}$ will clearly not be perfect.
 
 
 However, the perfectness of residue field is a key hypothesis of our reference \cite{yu}.  Although for the most part in \cite{bp} the residue field need not be perfect, it was added at certain places for convenience, without verifying its essentiality. In retrospect the perfectness of the residue field turns out to be {\em  inessential} owing to the splitness of $G$ and to the geometrical methods of \cite{base}, \cite{bp} and the present work.

\appendix
\section{Some key results used in the article}
Throughout this work, we refer to this section for all the classical results we use.
 
\begin{thm} \label{Corollaire IX. 1.4} \cite[Corollaire IX.1.4, page 130]{raynaud} Let $S$ be a scheme, $G$ and $H$ be two $S$ group schemes, with $G$ smooth on $S$ with connected fibers, and $H$ locally of finite type. We assume that $S$ is locally noetherian (resp. that $S$ is a Krull scheme and that $G$ is locally isomorphic to an open subset of $S[T_1,\cdots,T_n]$). Let $U$ be an open subset of $S$ containing the points of depth $\leq 1$ (resp. the points of codimension $\leq 1$) and $f^{U}: G_{_U} \ra H_{_U}$ a $U$- homomorphism. Then $f^{U}$ prolongs, uniquely, to a $S$-homomorphism $f: G \ra H$.
\end{thm}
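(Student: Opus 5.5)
The statement is Raynaud's extension theorem for homomorphisms across codimension $\geq 2$. I would reduce it to an extension statement for rational maps and then use the group structure to show the map is everywhere defined.

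\emph{Uniqueness.} Since $G$ is flat over $S$ and $U$ contains all points of depth $\leq 1$ (in particular all generic points), $G_U$ is schematically dense in $G$. Two $S$-morphisms $G\to H$ that agree on $G_U$ therefore agree on $G$, provided the locus where they agree is closed. This holds if $H$ is separated. In general I would argue fibrewise at the generic points, using the connectedness of the fibres of $G$ together with the reduced flat structure of $G$.

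\emph{Existence: first step.} View $f^U$ as an $S$-rational map $f: G \dashrightarrow H$. Let $\mathrm{Dom}(f)\subset G$ be its maximal open domain of definition. The main claim is that $\mathrm{Dom}(f)$ meets every fibre $G_s$ densely. For this I would localise at a point $s\notin U$, so that $\mathcal{O}_{S,s}$ has depth $\geq 2$. Then I would consider the generic point $\eta_s$ of the (irreducible, since connected and smooth) fibre $G_s$. The local ring $\mathcal{O}_{G,\eta_s}$ is flat over $\mathcal{O}_{S,s}$ with regular, zero-dimensional fibre. Hence its depth equals the depth of $\mathcal{O}_{S,s}$, which is at least $2$. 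In the Krull case the analogous statement holds with codimension in place of depth.

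\emph{Existence: the main obstacle.} The hard step is to show that a morphism defined on $G_U$ extends over $\eta_s$. For affine target $H$ this is Hartogs: functions extend across loci of depth $\geq 2$ (an $S_2$ argument). $H$ is only locally of finite type, so I would proceed as follows. First use the theory of $S$-rational maps and the group law to extend $f$ to a \emph{birational group law}, in Weil's sense. Next apply the Weil extension theorem, in Artin's relative version in SGA3 XVIII, to extend birational group laws to group schemes. Concretely, the map
\[(x,y)\mapsto f(x)f(x^{-1}y)\]
is defined on a dense open set of $G\times_S G$ that is $S$-dense in each fibre, and it agrees with $f(y)$ wherever both are defined. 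I expect controlling $\mathrm{Dom}(f)$ over the points of depth $\geq 2$ to be the main difficulty. In the Krull case the hypothesis that $G$ is locally isomorphic to an open subset of affine space is used precisely here: it makes $\mathcal{O}_G$ a polynomial-type extension, so Hartogs applies coordinatewise.

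\emph{Existence: conclusion by translation.} Once $\mathrm{Dom}(f)$ is fibrewise dense, let $x$ be any point of $G$, lying over $s$. Choose, after an étale base change, a section $g$ such that both $g$ and $g^{-1}x$ lie in $\mathrm{Dom}(f)$; this is possible because two dense opens in the irreducible fibre $G_s$ intersect. Then define
\[f(x) := f(g)\,f(g^{-1}x).\]
The relation coming from $f^U$ being a homomorphism shows that this is independent of $g$. The local definitions glue to a morphism $f: G\to H$ extending $f^U$. Finally, the identity $f(xy)=f(x)f(y)$ holds on $G_U\times G_U$, which is schematically dense in $G\times_S G$. By the uniqueness argument above, $f$ is a homomorphism.
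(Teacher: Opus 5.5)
The paper gives no proof of this statement; it quotes it from Raynaud. So I judge your outline on its own. It has a genuine gap exactly at the step you flag as the main obstacle.

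\textbf{The missing key step.} Nothing in your outline shows that, for $s\notin U$, the map $f^U$ extends over the generic point $\eta_s$ of $G_s$ (equivalently, over \emph{some} point of $G_s$). Your translation and gluing steps presuppose this. The tools you invoke do not supply it:
\begin{itemize}
\item Artin's theorem in SGA3 XVIII builds a group scheme out of a birational group law. It does not extend a map into a pre-existing $H$, and there is no birational group law here to ``extend $f$ to''.
\item Weil-type extension theorems for maps into group schemes assume the map is already $S$-rational, i.e.\ defined on an $S$-dense open. That is the very point at issue, and your ``concretely'' sentence assumes the same thing.
\item Your depth computation for $\mathcal{O}_{G,\eta_s}$ is correct, but depth $\geq 2$ only gives Hartogs for functions. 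It extends $f$ once you know that the punctured neighbourhood of some point of $G_s$ lands in an \emph{affine} open of $H$. For non-affine targets depth alone is useless: a section of $\mathbb{P}^1_U\to U$ over $U=\mathbb{A}^2\setminus\{0\}$ need not extend.
\item In the Krull case the hypothesis on $G$ only makes $G$ a Krull scheme, so that functions extend on $G$. It says nothing about a non-affine $H$.
\end{itemize}
So the homomorphism property has to enter essentially.

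\textbf{One way to fill the gap.} Assume $H$ separated and reduce to $S$ local with closed point $s$ and $U=S\setminus\{s\}$. Take an affine open $W\supset e_H(S)$, and let $I$, $J$ be the ideals of the unit sections in $W$ and $G$.
\begin{enumerate}
\item Since $f^U$ preserves unit sections, it induces algebra maps $\mathcal{O}_W/I^n\to\mathcal{O}_G/J^n$ over $U$. These extend uniquely over $S$, because $\mathcal{O}_G/J^n$ is finite locally free ($G$ smooth) and $\mathrm{depth}\,\mathcal{O}_{S,s}\geq 2$.
\item In the limit they give a morphism $\mathrm{Spec}\,\widehat{\mathcal{O}}_{G,e_G(s)}\to W$. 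Over $U$ it agrees with $f^U$ on every $V(J^n)$.
\item By depth, $\Gamma$ of $V(J^n)$ over the punctured spectrum is $\widehat{\mathcal{O}}/J^n$. Combined with $\bigcap_n J^n\widehat{\mathcal{O}}_{G,e_G(s)}=0$, this shows the closed locus over $U$ where the two maps agree has schematic closure all of $\mathrm{Spec}\,\widehat{\mathcal{O}}_{G,e_G(s)}$. Hence that locus is everything over $U$.
\item By faithful flatness of completion, $f^U$ sends every generization of $e_G(s)$ in $G_U$ into $W$. So some open $\Omega\ni e_G(s)$ satisfies $\Omega_U\subset (f^U)^{-1}(W)$, and Hartogs on $\Omega$ extends $f$.
\end{enumerate}
Only after this does your translation argument apply.

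\textbf{Separatedness.} The statement only assumes $H$ locally of finite type. Your uniqueness argument, the independence of $g$, and the final check that $f$ is a homomorphism all rest on the principle that two morphisms into $H$ agreeing on the schematically dense $G_U$ coincide. That needs $H$ separated; otherwise even ``the maximal domain of definition'' is not well defined.

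For mere morphisms the principle fails. Let $H$ be the étale $S$-group obtained by doubling the closed point of $S$, so $H_U=U$ and $H_s=\mathbb{Z}/2$. The unit section and the other section of $H$, composed with $G\to S$, agree on $G_U$ but differ. The second is an extension of the trivial $f^U$ that is not a homomorphism. So your last step, which compares the non-homomorphisms $(x,y)\mapsto f(xy)$ and $(x,y)\mapsto f(x)f(y)$, cannot be settled by density alone.

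For homomorphisms uniqueness does hold, but the reason is the group structure, not ``generic points and reduced flat structure''. The equalizer of two extensions is a locally closed subscheme through which the schematically dense $G_U$ factors. Hence it is an open subscheme of $G$, hence an open subgroup containing the unit section, hence all of $G$ because the fibres are connected.

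You therefore need either to assume $H$ separated, or to replace each appeal to density by an argument using the group structure. Examples are the open-subgroup argument above, and building $f$ from a germ at the unit section with values in an affine (hence separated) open of $H$.
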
 

\begin{thm} \label{uniquenessraynaud} \cite[Corollaire IX.1.5]{raynaud} Let $S$ be a noetherian scheme, $U$ a open subset of $S$ containing points of depth $\leq 1$, $G_{_U}$ a $U$ smooth group scheme, with connected fibers. Then, upto a $S$-isomorphism, there exists at most one $S$ group scheme $G$, smooth on $S$, with connected fibers, which prolongs $G_{_U}$.
\end{thm}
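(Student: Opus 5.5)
The statement immediately above is Theorem~\ref{uniquenessraynaud}, Raynaud's uniqueness result. It asserts that a smooth group scheme with connected fibres over $U$ has at most one smooth, connected-fibre prolongation to $S$ when $U$ contains all points of depth $\leq 1$. My plan is to deduce it from the homomorphism-extension statement just before it, Theorem~\ref{Corollaire IX. 1.4}, by a two-sided extension argument.

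The first step is to produce comparison maps. Let $G$ and $G'$ be two $S$-group schemes, each smooth with connected fibres, together with isomorphisms $G|_U \simeq G_U \simeq G'|_U$. This yields a $U$-isomorphism $\varphi^U : G|_U \to G'|_U$ and its inverse $\psi^U$.

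The second step is to extend them. Since $S$ is noetherian, $G$ is smooth with connected fibres, $G'$ is locally of finite type, and $U$ contains the points of depth $\leq 1$, Theorem~\ref{Corollaire IX. 1.4} applies. It extends $\varphi^U$ uniquely to an $S$-homomorphism $\varphi : G \to G'$, and symmetrically $\psi^U$ to $\psi : G' \to G$.

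The third step is to show that $\psi\circ\varphi = \mathrm{id}_G$ and $\varphi\circ\psi=\mathrm{id}_{G'}$. Both $\psi\circ\varphi$ and $\mathrm{id}_G$ are $S$-homomorphisms $G \to G$ that agree on $U$. The uniqueness clause of Theorem~\ref{Corollaire IX. 1.4}, applied with source and target both equal to $G$, forces them to be equal. The same argument gives the other composite, so $\varphi$ is an $S$-isomorphism of group schemes extending the given identification on $U$.

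The main obstacle lies in the uniqueness part of Theorem~\ref{Corollaire IX. 1.4} itself, if one wished to prove it rather than cite it. One must show that a morphism from a smooth $S$-scheme into a separated target is determined by its restriction to $U$, i.e.\ that $G|_U$ is schematically dense in $G$. I would argue this as follows. $G\to S$ is flat, and the complement of $U$ contains only points of depth $\geq 2$. Hence the complement of $G|_U$ in $G$ also has depth $\geq 2$, since depth is preserved under smooth morphisms. Because every associated point of $\mathcal O_G$ has depth $0$, all associated points lie over $U$, which gives schematic density.

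This schematic-density argument needs the target to be separated. For non-separated $G'$ I would instead rely on the group structure, as Raynaud does: the locus where two homomorphisms agree is the preimage of the unit section under $g\mapsto \varphi_1(g)\varphi_2(g)^{-1}$. For the present statement, however, citing Theorem~\ref{Corollaire IX. 1.4} suffices, so no new difficulty arises.
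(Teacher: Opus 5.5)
Your argument is correct. The paper gives no proof of its own; it quotes this result as Raynaud's Corollaire IX.1.5. Your deduction from Theorem~\ref{Corollaire IX. 1.4} is the standard way this corollary follows from IX.1.4: extend the identification and its inverse, then use the uniqueness clause to see that both composites are identities. Your additional discussion of schematic density and non-separated targets is not needed once Theorem~\ref{Corollaire IX. 1.4} is cited.
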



\begin{defi} \label{Wfunctor} \cite[D\'efinition 4.6.1, \S 4.6 Module functors in the category of schemes]{sga3} Let $S$ be a scheme. For every $\cO_S$-module $\cF$, we denote ${\bf{V}}(\cF)$ and ${\bf{W}}(\cF)$ the contravariant functors on $(\text{Sch})/_S$ defined by:
\begin{eqnarray*}
{\bf{V}}(\cF)(S') & = & \text{Hom}_{\cO(S')} \left(\cF \otimes_{\cO_S} \cO_{S'},\cO_{S'} \right), \\
{\bf{W}}(\cF)(S') & = & \Gamma \left(S', \cF \otimes_{\cO_S} \cO_{S'} \right).
\end{eqnarray*} 
\end{defi}  

\begin{thm} \label{rem9.3} \cite[Expose $VI_{B}$, Remarque 9.3 b) $3^{\circ}$]{sga3} It is not excluded that $G/G'$ be representable on the other hand, when $G$ and $G'$ are of finite presentation on $S$, and that $G'$ is {\ul{flat}} on $S$ and {\ul{closed}} in $G$. Under these hypotheses, one knows that $G/G'$ is representable in the following particular cases:
\begin{enumerate}
\item[$1^{\circ}-$] $S$ is the spectrum of an artinian ring,
\item[$2^{\circ}-$] $G'$ is proper on $S$ and $G$ is quasi-projective on $S$,
\item[$3^{\circ}-$] $S$ is locally noetherian of dimension $1$.
\end{enumerate}
 \end{thm}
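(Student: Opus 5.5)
This is SGA3 VI_B Remarque 9.3 b) 3°: if $S$ is locally noetherian of dimension $1$ and $G'\subset G$ is a flat closed subgroup of finite presentation, then the fppf quotient $G/G'$ is representable. The plan is to reduce to the Artinian case $1^{\circ}$ and to an analysis over a one-dimensional base, using that flat closed subgroups are locally determined by their generic fibres.

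\emph{Step 1: reduction to a local base.} Representability is local on $S$ for the Zariski topology, and fppf quotients commute with flat base change. So we may assume $S=\Spec(A)$ with $A$ noetherian of dimension $\le 1$. We then try to descend along $A\to \hat A$ or a henselisation. Working over finitely many generic points and finitely many closed points, we may think of $S$ as a semilocal Dedekind-like scheme, after replacing $S$ by its reduced structure modulo nilpotent thickenings. The thickenings are handled by the Artinian-type deformation argument of case $1^{\circ}$.

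\emph{Step 2: the fibres.} Over each point $s\in S$ (a field), $G_s/G'_s$ exists as a quasi-projective scheme by the classical theory of quotients of algebraic groups. Over the generic points $\eta$, this gives $G_\eta/G'_\eta$. The task is to glue these across the closed points.

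\emph{Step 3: construction via a Chevalley-type representation.} Over a Dedekind base, choose a finite free $\cO_S$-submodule $V\subset \cO(G)$, stable under $G$ (using comodule finiteness over noetherian rings of dimension $1$). Choose it so that $G'$ is the stabiliser of a saturated line $L\subset \wedge^d V$. Saturation is possible precisely because the base is one-dimensional regular: torsion-free implies flat, and a submodule can be saturated. Flatness of $G'$ and closedness ensure that the ideal of $G'$ meets $V$ in a direct summand. This is the step where the dimension hypothesis enters.

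Then the orbit map $G\to \PP(\wedge^dV)$ through $[L]$ induces a monomorphism $G/G'\to \PP$ on the fppf quotient sheaf. One then shows the image is locally closed in $\PP(\wedge^d V)$ and that the map is an immersion onto a flat subscheme. Over the dimension-one base, the schematic image of the orbit is flat, and fibrewise it agrees with the quotient of Step 2.

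\emph{Main obstacle.} The hard step is Step 3: producing a representation that realises $G'$ as a stabiliser integrally, and proving that the orbit map is an immersion rather than merely fibrewise one. I would instead first try the more robust route of SGA3 Exposé V and Anantharaman. This means considering the fppf equivalence relation $G\times G'\rightrightarrows G$. One then invokes the theorem that a flat proper-free equivalence relation on a scheme over a one-dimensional noetherian base has a quotient algebraic space; this follows from Artin's theorem. Finally, one proves the quotient algebraic space is a scheme, using quasi-projectivity of the fibres and the fact that over a base of dimension $\le 1$, separated algebraic spaces of finite type with quasi-projective fibres are schemes (Anantharaman). Verifying that last schemeness criterion is where I expect the real difficulty.
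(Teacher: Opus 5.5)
The paper gives no proof of this statement. It is quoted from SGA3, where case $3^{\circ}$ is recorded as a known result; it is a theorem of Anantharaman (\emph{Sch\'emas en groupes, espaces homog\`enes et espaces alg\'ebriques sur une base de dimension 1}, M\'em.\ SMF 33, 1973). Your proposal has a genuine gap at the one non-formal point.

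By Artin's theorem, the fppf quotient $G/G'$ is an algebraic space as soon as $G'$ is flat of finite presentation, over \emph{any} base; no properness or dimension hypothesis enters. It is separated because $G'$ is closed in $G$. So the entire content of $3^{\circ}$ is that this algebraic space is a scheme. For this you invoke a ``fact'': that separated algebraic spaces of finite type with quasi-projective fibres over a base of dimension $\le 1$ are schemes. This is not a theorem, and it is false, because schemeness is not detected on fibres even over a DVR. Take a suitable smooth projective family of quartic K3 surfaces over a DVR, with generic Picard number one and a line $C$ in the special fibre. The flop along $C$ is a smooth proper algebraic space over the DVR whose two fibres are the same projective K3 surfaces. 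Yet no point of the flopped curve $C'$ has an affine neighbourhood. The complement of such a neighbourhood would be an effective divisor meeting $C'$ positively, hence a divisor on the original family meeting $C$ negatively, and there is none. Anantharaman's theorem concerns group spaces and homogeneous spaces, and the homogeneity of $G/G'$ under $G$ is what has to be used. You must either cite that theorem, as SGA3 and the paper do, or give an argument that exploits the $G$-action.

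Your Chevalley-type route does not close the gap either. It presupposes $G$ affine (the statement only assumes finite presentation) and $S$ regular, whereas a locally noetherian scheme of dimension $1$ may be non-reduced and non-normal. The reductions in Step 1 are not justified. The normalization need not be finite and is not flat, so formation of $G/G'$ does not commute with it. Descent of schemes along $A\to\hat A$ or a henselization is not effective in general; it produces exactly algebraic spaces, which is the problem at hand. Even for $S$ Dedekind and $G$ affine, where a saturated line $L\subset\wedge^d V$ with stabiliser $G'$ can indeed be produced, the orbit map only gives a monomorphism of finite type $G/G'\to\PP(\wedge^d V)$. Such a monomorphism need not be an immersion, and proving that it is one is the substance of the theorem; you leave it open. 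In the paper's application ($S=D$ a smooth curve, $G=\mathfrak{G}_D$ smooth affine) only this regular affine case matters, but the immersion step is still what is missing.
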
 

\begin{thm} \label{sixthree} \cite[Expos\'e XIX, Proposition 6.3]{sga3} Let $S$ be a scheme, $G$ a smooth $S$ scheme of finite presentation on $S$, $Q$ a sub-torus of $G$. Then ${\ul{\text{Centr}}}_{_G}(Q)$ and ${\ul{\text{Norm}}}_{_G}(Q)$ are representable by closed and smooth subgroup schemes (and thus of finite presentation) on $S$.
\end{thm}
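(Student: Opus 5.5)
The statement just above is Theorem~\ref{sixthree} (SGA3, Exp.~XIX, Prop.~6.3): for a smooth finitely presented $S$-group $G$ and a subtorus $Q$, the centralizer and normalizer of $Q$ are represented by closed smooth subgroup schemes. My plan uses the rigidity of tori throughout and proceeds in four steps.

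\emph{Representability and closedness.} The question is local on $S$ in the fpqc (indeed étale) topology, since closed immersions and smoothness descend. So I will first assume $Q\simeq \mathbb{G}_m^r\times S$ is split. The centralizer is the functor of $g\in G$ with $\mathrm{int}(g)|_Q=\mathrm{id}_Q$. The normalizer is the functor of $g$ with $\mathrm{int}(g)(Q)=Q$. For a split torus, a morphism $Q\to G$ fixing $Q$ pointwise is detected by the comodule maps on the affine group ring $\mathcal{O}_S[X(Q)]$. The equalizer of $\mathrm{int}(g)|_Q$ and the inclusion is therefore cut out by the vanishing of coefficients along a basis of characters, which gives a closed subscheme. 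Here I use that $G$ is separated; this holds because a finitely presented smooth group over $S$ has closed unit section when its fibers are… and if needed I will invoke the locally-closed variant and pass to the closure.

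\emph{Normalizer versus centralizer.} Since $\underline{\mathrm{Aut}}(Q)$ is the constant group $\mathrm{GL}_r(\mathbb{Z})_S$, which is étale, the map $\mathrm{Norm}_G(Q)\to \underline{\mathrm{Aut}}(Q)$ shows that $\mathrm{Norm}_G(Q)$ is a union of open and closed translates of $\mathrm{Centr}_G(Q)$. Hence representability, closedness and smoothness for the normalizer reduce to the centralizer.

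\emph{Smoothness via the infinitesimal criterion.} This is the main step. Let $A\to A_0=A/I$ with $I^2=0$, and let $g_0\in \mathrm{Centr}(A_0)$. Smoothness of $G$ gives a lift $g\in G(A)$. The two morphisms $\mathrm{int}(g)|_Q$ and $\mathrm{incl}$ agree modulo $I$. Their difference defines a 1-cocycle of $Q$ with values in $\mathrm{Lie}(G)\otimes I$ for the adjoint action. The lifts of $g_0$ form a torsor under $\mathrm{Lie}(G)\otimes I$, and changing $g$ changes the cocycle by a coboundary. Since $Q$ is diagonalizable, $H^1(Q,-)=0$ for quasi-coherent modules (SGA3 Exp.~I, Thm.~5.3.3). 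So some lift lies in the centralizer, which gives formal smoothness. Finite presentation comes from the closed subscheme cut out by finitely many equations, after noetherian approximation.

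The obstacle I expect is making the cocycle identification and the vanishing of $H^1$ precise without assuming $S$ is noetherian. For that I will quote the SGA3 results on diagonalizable groups (Exp.~I, II, IX) rather than reprove them.
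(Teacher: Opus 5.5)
Your smoothness step is correct. It is exactly the mechanism behind \cite[Expos\'e XI 2.4, 2.4 bis]{sga3}, which the paper cites for smoothness: the obstruction to moving a lift of $g_0$ into the centralizer is a class in $H^1(Q,\mathrm{Lie}(G)\otimes I)$, and this vanishes because $Q$ is diagonalizable. Your noetherian approximation for finite presentation is also fine.

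The gap is in representability and closedness, which the paper takes from \cite[Expos\'e XI 6.11]{sga3} rather than proving. Your equalizer argument is the essential-freeness criterion recorded in Example~\ref{exemplee}. It needs the locus in $Q_T$ where $\mathrm{int}(g)|_Q$ and the inclusion agree to be \emph{closed}, i.e.\ it needs $G$ to be separated over $S$. The statement does not assume this, and your justification is unfinished and false in general. For $S=\Spec R$ with $R$ a discrete valuation ring, gluing two copies of $S$ along the open point $\Spec K$ gives an \'etale, finitely presented $S$-group whose unit section is open but not closed. Without separatedness the equalizer is only locally closed. Replacing it by its closure changes the functor, so step~1 does not produce a closed subscheme representing $\mathrm{Centr}_G(Q)$.

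Even granting separatedness, reducing the normalizer to the centralizer does not give closedness. Write $\mathrm{Norm}_G(Q)=\coprod_{\sigma\in\GL_r(\ZZ)}\mathrm{Norm}_\sigma$. Each piece is closed in $G$, but an infinite disjoint union of closed subschemes need not be closed. You would need that only finitely many $\sigma$ occur locally on $S$, which you have not shown. Also, ``translates'' presupposes that each piece has sections.

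The clean repair is to reverse your order:
\begin{enumerate}
\item $\mathrm{Norm}_G(Q)$ is the strict transporter of $Q$ into itself under conjugation. By the transporter clause of Example~\ref{exemplee} it is closed in $G$, using only that $Q$ is closed in $G$ and essentially free over $S$; no separatedness of $G$ is needed.
\item $\mathrm{Centr}_G(Q)$ is the preimage of the unit section of the constant group $\GL_r(\ZZ)_S$ under $g\mapsto\mathrm{int}(g)|_Q$. Hence it is open and closed in $\mathrm{Norm}_G(Q)$, and so closed in $G$.
\item Since $\mathrm{Norm}_G(Q)$ is closed in the quasi-compact $G$, it is quasi-compact over $S$, and this gives the local finiteness automatically.
\item For smoothness of $\mathrm{Norm}_G(Q)$, run your $H^1$ argument directly. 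Lift $g_0$ arbitrarily, lift $\sigma_0=\mathrm{int}(g_0)|_Q$ uniquely to $\sigma$ (possible since $\GL_r(\ZZ)_S$ is \'etale), and kill the cocycle $q\mapsto gqg^{-1}\sigma(q)^{-1}$.
\end{enumerate}
This covers the case where $Q$ is closed in $G$. Beyond that you are relying on the SGA3 citation, just as the paper does.
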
 
By \cite[Expos\'e XI 6.11]{sga3}, ${\ul{\text{Centr}}}_{_G}(Q)$ and ${\ul{\text{Norm}}}_{_G}(Q)$ are representable by closed subgroup schemes of finite presentation of $G$. These are smooth by \cite[Expos\'e XI 2.4 and 2.4 bis]{sga3}. We refer the reader also to  \cite[Expos\'e XI, Cor. 5.3 or Thm 6.2]{sga3} for a similar result.

\begin{defi} \cite[Expos\'e VI$_B$ D\'efinition 6.2.1]{sga3} Let $f:X \ra S$ be a morphism of schemes. We say that $f$ is {\ul{essentially free}} on $S$, if one can find a covering of  $S$ by affine open $S_i$, for every $i$ a $S_i$-scheme $S_i'$ which is affine and faithfully flat on $S_i$, and a covering $(X'_{ij})_{j}$ of $X_i'=X \times_S S_i'$ by the open affine $X'_{ij}$, such that for every $(i,j)$, the ring of $X_{ij}'$ is a projective module over the ring of $S_i'$.  
\end{defi}

\begin{ex} \label{exemplee} \cite[Expos\'e VI$_B$ Exemples 6.2.4 e)]{sga3} When $G$ is a $S$-group scheme acting on a $S$-scheme $X$
$$q: G \ra \ul{\text{Aut}}_S(X),$$
the kernel of $q$ (`` the subgroup of $G$ acting trivially") is a {\ul{closed}} subscheme of $G$ provided that $X$ be essentially free and separated over $S$ (example c)),....

Let $Y$ and $Z$ be subschemes of $X$; let us consider the subfunctor ${\ul{\text{Transp}}}_G(Y,Z)$ of $G$ (``transporter of $Y$ in $Z$'') whose points with values in $T$ over $S$ consist of $g \in G(T)$ such that the corresponding automorphism of $X_T$ satisfies $g(Y_T) \subset Z_T$, i.e., induces a morphism $Y_{_T} \ra X_{_T}$ factoring through $Y_{_T} \ra Z_{_T}$. Thus,: {\em if $Y$ is essentially free on $S$, and $Z$ is closed in $X$, then $\ul{\text{Transp}}_G(Y,Z)$ is a closed subscheme of $G$} (example a).

One can also consider {\em the strict transporter of $Y$ in $Z$}, whose points with values in $T$ on $S$ are the $g \in G(T)$ such that $g(Y_T) =Z_T$,.... It follows, {\em if $Y$ and $Z$ are essentially free on $S$ and closed in $X$, the strict transporter of $Y$ in $Z$ is a {\ul{closed}} subscheme of $G$.} 
\end{ex}

\begin{prop}\label{ex6b} \cite[Proposition 4.2 a) Expos\'e VI$_B$]{sga3} Let $\pi: G \ra S$ be a $S$-scheme locally of finite presentation, equipped with a section $\epsilon$ and satisfying the following two conditions (which hold if $G$ is a $S$-group, by 1.5 and IV$_A$ 2.4.1 (loc. cit.) ):
\begin{enumerate}
\item[a)] For every $s \in S$ and every $x \in G_s$, we have
$$\text{dim}~G_s= \text{dim}_x~G_s.$$
(or equivalently (1.5), for every $s \in S$, all the irreducible components of $G_s$ have the same dimension).
\item[b)] For every $s \in S$, denoting $G^\circ_s$ the connected component of $G_s$ containing $\epsilon(s)$, $G^\circ_s$ is geometrically irreducible.
\end{enumerate}
\end{prop}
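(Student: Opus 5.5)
The plan is to verify the parenthetical claim of Proposition \ref{ex6b}: when $G$ is an $S$-group scheme locally of finite presentation, with $\epsilon$ the unit section, conditions a) and b) hold. Both conditions concern only the fibres. So I would fix $s \in S$ and work with the group scheme $G_s$, which is locally of finite type over the field $k = \kappa(s)$. The general strategy is to pass to an algebraic closure $\bar k$ and use translations by rational points, which are automorphisms of the underlying scheme. Dimension at a point is insensitive to the field extension $k \subset \bar k$: for a point $x \in G_s$ and any point $\bar x$ of $G_{\bar k}$ above it, one has $\dim_{x} G_s = \dim_{\bar x} G_{\bar k}$. Hence it suffices to treat $k = \bar k$, after checking separately that irreducibility of the identity component descends.

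For a), assume $k$ algebraically closed. For two closed points $x,y \in G_s(k)$, left translation by $yx^{-1}$ is an automorphism of $G_s$ carrying $x$ to $y$. Therefore $\dim_x G_s = \dim_y G_s$, so the local dimension is constant on closed points. Since $G_s$ is locally of finite type over a field, $x \mapsto \dim_x G_s$ is upper semicontinuous and every open set contains a closed point. It follows that the local dimension is constant everywhere and equal to $\dim G_s$. Equivalently, all irreducible components have the same dimension, which is the reformulation cited from 1.5.

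For b), I first treat the case $k = \bar k$. Replacing $G_s$ by $(G_s)_{\rm red}$, which is a smooth group over a perfect field and has the same topology, I may assume $G_s$ is smooth. A smooth scheme is normal, so it is locally irreducible and distinct irreducible components are disjoint. Its connected components are then exactly its irreducible components. In particular the component $G_s^\circ$ containing $\epsilon(s)$ is irreducible. To avoid invoking normality, one can argue directly instead. If $e$ lay on two irreducible components, then by translation every rational point would lie on at least two components. But a nonempty open set avoiding all other components contains a rational point lying on only one component, which is a contradiction.

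For general $k$, $G_s^\circ$ is connected and has the rational point $\epsilon(s)$. A connected scheme locally of finite type over $k$ with a rational point is geometrically connected. Hence $(G_s^\circ)_{\bar k}$ is the identity component of $G_{\bar k}$, which is irreducible by the previous paragraph. I expect this last step to be the main obstacle. It requires the precise statement that a connected $k$-scheme with a $k$-point stays connected after any field extension (EGA IV 4.5.13, or SGA3 VI$_A$ 2.1.1), and also care that the reduction step used only perfectness of $\bar k$.
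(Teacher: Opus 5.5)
Your argument is correct, but it is a proof where the paper only gives a citation. The statement as quoted records only the hypotheses of SGA3's Proposition~4.2; its conclusion is not reproduced, and the paper supplies no argument, simply citing SGA3 for the parenthetical claim. So proving that an $S$-group scheme locally of finite presentation satisfies a) and b) is the sensible reading.

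You replace that citation with a self-contained proof along standard lines:
\begin{enumerate}
\item[a)] You use invariance of $\dim_x$ under $k\subset \bar k$, homogeneity under translation by rational points, and upper semicontinuity together with density of closed points.
\item[b)] You use geometric connectedness of $G_s^\circ$, coming from the rational point $\epsilon(s)$, together with irreducibility of the identity component over an algebraically closed field, via smoothness of the reduced subgroup.
\end{enumerate}

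What your version buys is transparency about what is actually used. Condition a) is purely fibrewise and needs no flatness over $S$. That is exactly how the paper invokes this result in Step 4: it needs every fibre of the possibly non-flat closed subgroup scheme $N_Y$ to be equidimensional.

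One small remark on your optional ``direct'' alternative to normality. Showing that $e$ lies on a single irreducible component $Z$ does not by itself give $G_s^\circ = Z$. You also need that no point lies on two components, so that $Z$ is open and hence clopen. This follows because if a non-closed point lay on two components, so would every closed point of its closure, contradicting the translation argument. Your normality argument already covers this, so nothing is missing from the main line.
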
 


\begin{thm} \label{van} \cite[Corollaire 21.12.7]{ega4} Let $X$ be a noetherian prescheme, $U$ an induced sub-prescheme on an open of $X$, $j: U \ra X$ the canonical immersion; suppose that $j$ is an affine morphism. Then every irreducible component of $T=X-U$ is of dimension $\leq 1$ (and thus of codimension $1$ if $U$ is everywhere dense).
\end{thm}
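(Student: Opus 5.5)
The statement is local around the generic points of $T$, so I will reduce to a noetherian local ring and derive a contradiction from local cohomology. Throughout I read ``dimension $\leq 1$'' as the codimension statement of \cite[Corollaire 21.12.7]{ega4}: for every irreducible component $T_{_0}$ of $T = X - U$ with generic point $x$, I will show $\dim \mathcal{O}_{_{X,x}} \leq 1$.

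\textbf{Localization.} Fix such a component $T_{_0}$ with generic point $x$, and set $A := \mathcal{O}_{_{X,x}}$, $X' := \Spec(A)$ and $U' := U \times_X X'$. Since $x$ is a generic point of $T$, the only point of $T$ that generizes to $x$ is $x$ itself. Hence $U' = X' - \{\mathfrak{m}\}$ is the punctured spectrum of $A$.

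Affineness of $j$ is stable under the flat base change $X' \to X$. Therefore $U' \to X'$ is affine, and since $X'$ is affine, $U'$ is an affine scheme.

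\textbf{Local cohomology.} Let $d = \dim A$ and suppose, for contradiction, that $d \geq 2$. For the noetherian local ring $A$ there are the standard identifications
$$H^{i}(U', \mathcal{O}_{_{U'}}) \simeq H^{i+1}_{\mathfrak{m}}(A) \quad (i \geq 1),$$
coming from the excision long exact sequence
$$0 \to H^0_{\mathfrak m}(A) \to A \to \Gamma(U',\mathcal{O}) \to H^1_{\mathfrak m}(A) \to 0,$$
together with the vanishing of the higher cohomology of the affine scheme $X'$. Since $U'$ is affine, Serre's vanishing gives $H^{i}(U',\mathcal{O}) = 0$ for $i \geq 1$. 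Hence $H^{j}_{\mathfrak m}(A) = 0$ for all $j \geq 2$.

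On the other hand, Grothendieck's non-vanishing theorem gives $H^{d}_{\mathfrak m}(A) \neq 0$ for $d = \dim A$. With $d \geq 2$ this contradicts the previous paragraph, so $d \leq 1$.

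\textbf{Density.} If in addition $U$ is dense in $X$, then $x$ is not a generic point of $X$, so $\dim A \geq 1$. Hence $T_{_0}$ has codimension exactly $1$.

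\textbf{Main obstacle.} The substantive ingredient is the non-vanishing $H^{\dim A}_{\mathfrak m}(A)\neq 0$, which I take as known; if needed, one passes to $\hat A$, where it follows from local duality or the Hartshorne--Lichtenbaum circle of ideas. The remaining care is in the reduction step: affineness must descend correctly under localization, which holds because affine morphisms are stable under base change. An alternative route, if one wishes to avoid local cohomology, is to pass to the normalization of $\hat A / \mathfrak p$ for a minimal prime $\mathfrak p$ of dimension $d$ and use Hartogs' extension $\Gamma(U',\mathcal O) = A$ for normal $A$ of depth $\geq 2$. This would force $U' = \Spec A$, contradicting $\mathfrak m \notin U'$. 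I expect the local cohomology route to be cleaner.
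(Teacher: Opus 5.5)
Your proof is correct. You were also right to read the statement as a \emph{codimension} statement. As quoted in the paper (``of dimension $\leq 1$'') it is false: take $X=\mathbb{A}^3_k$ and $U=D(x)$, whose complement is a plane. EGA IV 21.12.7 asserts codimension $\leq 1$, and that is also how the paper uses it in Step 4 (``$\text{codim} = 1$'').

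The paper gives no argument of its own here; it only cites EGA. So your proof is a genuine justification of the cited fact rather than a variant of something in the text. Your argument runs as follows:
\begin{itemize}
\item localize at the generic point $x$ of a component of $T$, so that $U'$ is the punctured spectrum of $A=\mathcal{O}_{X,x}$ and is affine by base change;
\item deduce $H^{j}_{\mathfrak m}(A)\simeq H^{j-1}(U',\mathcal{O})=0$ for $j\geq 2$;
\item contradict Grothendieck's non-vanishing $H^{\dim A}_{\mathfrak m}(A)\neq 0$ when $\dim A\geq 2$.
\end{itemize}
This is the standard local-cohomology proof. The alternative you sketch completes $A$, passes to $\hat A/\mathfrak p$ with $\dim \hat A/\mathfrak p=\dim A$, normalizes to $B$, and uses Serre's $S_2$ with Hartogs extension $\Gamma(V,\mathcal{O}_V)=B$ on the punctured spectrum $V$ of $B$. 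Compared with it, your route is shorter and proves more: the punctured spectrum of a $d$-dimensional noetherian local ring has $H^{d-1}\neq 0$, i.e.\ cohomological dimension exactly $d-1$. The price is importing the non-vanishing theorem. The normalization route needs only depth, Serre's criterion, and the fact that the normalization of a complete local domain is finite and local.

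A few small corrections:
\begin{itemize}
\item Affineness is stable under arbitrary base change, so flatness of $\Spec(A)\to X$ plays no role.
\item The non-vanishing of $H^{d}_{\mathfrak m}(A)$ holds for every noetherian local ring, so no passage to $\hat A$ and no appeal to Hartshorne--Lichtenbaum is needed.
\item In the alternative route, the object forced to equal the spectrum of its global sections is $V$, not $U'$ itself. $V$ is the preimage of the punctured spectrum of $\hat A/\mathfrak p$ because $B$ is local.
\item In the density step, the relevant fact is that $x$ is not the generic point of an irreducible component of $X$, which gives $\dim A\geq 1$.
\end{itemize}
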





\subsubsection{From \cite{bt2}}
In \cite[\S 1]{bt2}, $K$ is an infinite field and $A$ is a sub-ring of $K$ having $K$ as its field of fractions.

\begin{thm} \label{schematiccopen} \cite[\S 1.2.6]{bt2}: Let $f: \mathfrak{Y} \ra \mathfrak{X}$ be a $A$-morphism. The {\it schematic image} of $f$ (or by abuse of language of $\mathfrak{Y}$) in $\mathfrak{X}$ is the smallest closed subscheme $\mathfrak{Y}'$ of $\mathfrak{X}$ such that $f$ maps $\mathfrak{Y}$ in $\mathfrak{Y}'$. Its ideal is the kernel of $f^*$ and its support is the closure of the image $f(\mathfrak{Y})$ (cf. \cite[pp. 176-177]{ega1}). If $f$ is the canonical immersion of a subscheme (affine) $\mathfrak{Y}$ of $\mathfrak{X}$, one also says that $\mathfrak{Y}'$ is the {\it schematic closure} of $\mathfrak{Y}$ in $\mathfrak{X}$. The scheme $\mathfrak{Y}$ is then an {\it open} subscheme of its schematic closure $\mathfrak{Y}'$ (loc. cit. 9.5.10). 
\end{thm}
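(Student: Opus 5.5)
We prove the three assertions of Theorem~\ref{schematiccopen} directly from the definition of the schematic image. The standing situation of \cite[\S 1]{bt2} is that all $A$-schemes are affine, or at least noetherian, and all morphisms are of finite type. So every morphism $f\colon \mathfrak{Y}\ra\mathfrak{X}$ we meet is quasi-compact and quasi-separated. This is the one hypothesis we really use.

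First, the description of the ideal and the support. Let $\mathcal{I}:=\ker\left(\cO_{\mathfrak{X}}\ra f_*\cO_{\mathfrak{Y}}\right)$. Since $f$ is quasi-compact and quasi-separated, $f_*\cO_{\mathfrak{Y}}$ is quasi-coherent, hence so is $\mathcal{I}$. It therefore defines a closed subscheme $\mathfrak{Y}'$ through which $f$ factors.

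Minimality follows from the universal property of kernels. If $f$ factors through a closed subscheme $V(\mathcal{J})$, then the composite $\mathcal{J}\ra\cO_{\mathfrak{X}}\ra f_*\cO_{\mathfrak{Y}}$ vanishes, so $\mathcal{J}\subset\mathcal{I}$ and $\mathfrak{Y}'\subset V(\mathcal{J})$. In the affine case this is the statement that the ideal is $\ker f^*$.

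For the support, clearly $\overline{f(\mathfrak{Y})}\subset\mathfrak{Y}'$. Conversely, let $x\notin\overline{f(\mathfrak{Y})}$ and take an affine neighbourhood $W$ of $x$ disjoint from $f(\mathfrak{Y})$. Then $f_*\cO_{\mathfrak{Y}}|_W=0$, so $\mathcal{I}|_W=\cO_W$ and $x\notin\mathfrak{Y}'$.

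Next, the key step: the schematic image commutes with restriction to open subsets of $\mathfrak{X}$. For $V\subset\mathfrak{X}$ open, set $f_V\colon f^{-1}(V)\ra V$. The formation of $f_*$, and hence of the kernel $\mathcal{I}$, is compatible with restriction to $V$, because $(f_*\cO_{\mathfrak{Y}})|_V=(f_V)_*\cO_{f^{-1}(V)}$. It follows that the schematic image of $f_V$ is $\mathfrak{Y}'\cap V$. This is the only point where quasi-compactness is essential: without it the kernel need not be quasi-coherent, and the schematic image need not be computed by this sheaf. I expect this verification to be the main, though mild, obstacle.

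Finally, the case of an immersion. Let $f$ be the canonical immersion of a subscheme, that is, a locally closed immersion. Then there is an open $V\subset\mathfrak{X}$ with $f(\mathfrak{Y})\subset V$ and $\mathfrak{Y}\ra V$ a closed immersion, so $f^{-1}(V)=\mathfrak{Y}$. The schematic image of a closed immersion is the closed subscheme itself, since $\cO_V\ra f_*\cO_{\mathfrak{Y}}$ is surjective with kernel the ideal of $\mathfrak{Y}$ in $V$. By the previous step, $\mathfrak{Y}'\cap V=\mathfrak{Y}$ as subschemes of $V$. Thus $\mathfrak{Y}$ is the open subscheme of $\mathfrak{Y}'$ induced on the open set $\mathfrak{Y}'\cap V$, as claimed in \cite[9.5.10]{ega1}.
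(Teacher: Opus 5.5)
Your proof is correct and is essentially the standard argument of \cite[\S 9.5]{ega1}: the schematic image is cut out by the quasi-coherent kernel of $\cO_{\mathfrak{X}} \ra f_*\cO_{\mathfrak{Y}}$, this construction is compatible with restriction to open subsets, and 9.5.10 then handles immersions; the paper gives no proof of its own and simply relies on these citations. The only hypothesis you actually use, that $f$ be quasi-compact and quasi-separated, is automatic in the affine setting of \cite[\S 1]{bt2}, whereas noetherianity is not part of that setting, since $A$ there is an arbitrary subring of $K$.
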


\begin{thm} \label{schematiccgroup} \cite[\S 1.2.7]{bt2}: In particular, {\it if $Y$ is a closed $K$-subgroup scheme of $\mathfrak{X}_K$ and if the schematic closure $\mathfrak{Y}$ of $Y$ is a flat $A$-scheme, then $\mathfrak{Y}$ is a closed $A$-subgroup scheme of $\mathfrak{X}$.}
\end{thm}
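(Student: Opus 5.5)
The plan is to show that $\mathfrak{Y}$ is stable under the group law, inverse and unit section of the $A$-group scheme $\mathfrak{X}$; it is then automatically a closed $A$-subgroup scheme. The single tool is the following consequence of flatness over the domain $A$: for any flat $A$-scheme $\mathfrak{Z}$, the generic fibre $\mathfrak{Z}_K$ is schematically dense in $\mathfrak{Z}$. Indeed, on an affine open $\Spec(R)$ of $\mathfrak{Z}$ the ring $R$ is $A$-flat, hence torsion free, so $R \hookrightarrow R\otimes_A K$ is injective. I will apply this with $\mathfrak{Z}=\mathfrak{Y}$, $\mathfrak{Z}=\mathfrak{Y}\times_A\mathfrak{Y}$ and $\mathfrak{Z}=\Spec(A)$. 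Each is $A$-flat, the product being so because flatness is stable under base change and composition.

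\textbf{Step 1 (unit).} Let $e:\Spec(A)\ra \mathfrak{X}$ be the unit section. Then $e^{-1}(\mathfrak{Y})$ is a closed subscheme of $\Spec(A)$. Its generic fibre is $e_K^{-1}(Y)=\Spec(K)$, because $Y$ is a subgroup of $\mathfrak{X}_K$. The ideal of $e^{-1}(\mathfrak{Y})$ therefore becomes zero after tensoring with $K$, and since $A$ is torsion free it is zero. Hence $e$ factors through $\mathfrak{Y}$.

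\textbf{Step 2 (multiplication and inverse).} Let $m:\mathfrak{X}\times_A\mathfrak{X}\ra\mathfrak{X}$ be the multiplication. The closed subscheme $m^{-1}(\mathfrak{Y})\cap(\mathfrak{Y}\times_A\mathfrak{Y})$ of $\mathfrak{Y}\times_A\mathfrak{Y}$ contains $Y\times_K Y=(\mathfrak{Y}\times_A\mathfrak{Y})_K$, because $m_K(Y\times Y)\subset Y$. By the schematic density of the generic fibre of the flat scheme $\mathfrak{Y}\times_A\mathfrak{Y}$, this closed subscheme is all of $\mathfrak{Y}\times_A\mathfrak{Y}$. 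Hence $m$ restricts to a morphism $\mathfrak{Y}\times_A\mathfrak{Y}\ra\mathfrak{Y}$. The inverse is treated in the same way, using that $\iota^{-1}(\mathfrak{Y})\cap\mathfrak{Y}$ contains $Y$ and that $Y$ is schematically dense in $\mathfrak{Y}$.

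\textbf{Step 3 (conclusion).} The group axioms for the restricted maps follow from those in $\mathfrak{X}$, because $\mathfrak{Y}\ra\mathfrak{X}$ is a monomorphism. Hence $\mathfrak{Y}$ is a closed $A$-subgroup scheme of $\mathfrak{X}$.

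The only delicate point is the density assertion in Step 2, which uses flatness of $\mathfrak{Y}\times_A\mathfrak{Y}$ and not merely of $\mathfrak{Y}$. I handle it affine-locally: on affine opens $\Spec(R)$ of $\mathfrak{Y}$ with $R$ flat over $A$, the ring $R\otimes_A R$ is $A$-flat and therefore injects into $(R\otimes_A R)\otimes_A K=R_K\otimes_K R_K$. This is exactly the statement that the ideal of any closed subscheme containing $Y\times_K Y$ vanishes on $\mathfrak{Y}\times_A\mathfrak{Y}$.

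Without flatness this injectivity fails, and the closure need not be stable under $m$. This is precisely the phenomenon of the counterexamples in \cite[3.2.15]{bt2}.
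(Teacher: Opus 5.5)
Your argument is correct and is essentially the one behind the cited \cite[1.2.7]{bt2}. There, flatness makes $\mathfrak{Y}\times_A\mathfrak{Y}$ the schematic closure of its generic fibre $Y\times_K Y$, so the multiplication maps it into the schematic closure $\mathfrak{Y}$ of $Y$; the unit and inverse need no flatness at all. The one fact you use without comment is $\mathfrak{Y}_K=Y$, which holds because forming the schematic closure commutes with the flat localisation base change $A\to K$.
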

 
 \begin{thm} \label{schcopen} \cite[\S 1.2.14]{bt2} Let $\mathfrak{G}$ and $\mathfrak{H}$ be two smooth $A$ group schemes and let $j: \mathfrak{G} \rightarrow \mathfrak{H}$ be a morphism. Suppose that $j_{_K}: \mathfrak{G}_{_K} \rightarrow \mathfrak{H}_{_K}$ is an isomorphism and that there exists an open neighbourhood $\mathfrak{U}$ (resp. $\mathfrak{B}$) of the unit section of $\mathfrak{G}$ (resp. $\mathfrak{H}$) such that $j(\mathfrak{U})=\mathfrak{B}$ and that the restriction of $j$ to $\mathfrak{U}$ is an isomorphism of schemes from $\mathfrak{U}$ to $\mathfrak{B}$. Then $j$ is an isomorphism of $\mathfrak{G}$ onto an open subscheme of $\mathfrak{H}$.
 \end{thm}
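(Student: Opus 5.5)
The strategy is to prove two things about $j$: that it is \emph{étale}, and that it is a \emph{monomorphism}, or at least injective on points with a separatedness argument. An étale monomorphism of schemes is an open immersion (EGA IV, 17.9.1), so this gives the conclusion. The hypothesis that $j_{_K}$ is an isomorphism is only used in the second half, to control the kernel of $j$. Throughout, $A$ is a domain with fraction field $K$ and both group schemes are smooth, hence flat, over $A$.

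\emph{Étaleness.} Let $x$ be a point of $\mathfrak{G}$ lying over $s \in \Spec(A)$. Since $\mathfrak{G}$ is smooth, there is an étale neighbourhood $S' \ra \Spec(A)$ of $s$ and a section $g \in \mathfrak{G}(S')$ passing through (a point over) $x$. Left translation by $g$ is an automorphism of $\mathfrak{G}_{S'}$, and left translation by $j(g)$ is an automorphism of $\mathfrak{H}_{S'}$. Since $j$ is a homomorphism, $j(g u) = j(g) j(u)$, so $j_{S'}$ restricted to the open set $g\mathfrak{U}_{S'}$ is the composite of two translations with the isomorphism $\mathfrak{U}_{S'} \ra \mathfrak{B}_{S'}$. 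It is therefore an isomorphism onto the open set $j(g)\mathfrak{B}_{S'}$. Hence $j_{S'}$ is étale (indeed a local isomorphism) at points over $x$, and by descent along the étale cover $j$ is étale at $x$.

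\emph{Injectivity.} Let $\mathfrak{K} = \Ker(j)$, a closed subgroup scheme of $\mathfrak{G}$. Because $j|_{\mathfrak{U}}$ is injective, $\mathfrak{K} \cap \mathfrak{U}$ is the unit section. It suffices to show that every point $x \in \mathfrak{K}$ lies on the unit section. Choose $S'$ and $g$ through $x$ as above, shrinking $S'$ to be connected and normal; this is possible since $A$ is smooth-locally nice in our applications, and in general one works with the reduced irreducible components. On generic fibres $j_{_K}$ is injective, so the unit section $e_{S'}$ and the sections of $g\mathfrak{U}$ mapping to $e$ agree over the generic point of $S'$. Taking closures, using that $S'$ is reduced and irreducible and that $\mathfrak{G}$ is separated, we get $e(S') \subset g\mathfrak{U}_{S'}$ near $s$. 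Now both $x$ and $e(s)$ lie in $g\mathfrak{U}_{S'}$ over $s$ and map to $e(s)$ in $\mathfrak{H}$. Since $j|_{g\mathfrak{U}}$ is an isomorphism, $x = e(s)$. Thus $\mathfrak{K}$ is set-theoretically the unit section.

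\emph{Conclusion.} The kernel $\mathfrak{K}$ is unramified over $A$ because $j$ is étale, and its only points are those of the unit section, which is open in $\mathfrak{K}$. Hence $\mathfrak{K}$ equals the unit section scheme-theoretically, and $j$ is a monomorphism. Therefore $j$ is an étale monomorphism, hence an open immersion, i.e.\ an isomorphism of $\mathfrak{G}$ onto an open subscheme of $\mathfrak{H}$.

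The main obstacle is the injectivity step: ruling out "extra" points of the kernel in special fibres, which requires the closure/separatedness argument. Étaleness is routine once sections through arbitrary points are produced étale-locally.
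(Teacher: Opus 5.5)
The paper does not prove this statement itself; it is imported from \cite[\S 1.2.14]{bt2}. Your architecture is the standard route: $j$ is \'etale by translation, $\Ker(j)$ is trivial, and an \'etale monomorphism is an open immersion \cite[17.9.1]{ega4}. But the \'etaleness step rests on a false claim. Smoothness does \emph{not} give, for an arbitrary $x\in\mathfrak{G}$ over $s$, an \'etale $S'\to\Spec(A)$ and a section $g\in\mathfrak{G}(S')$ through a point over $x$. If $s'\in S'$ lies over $s$, then $\kappa(s')/\kappa(s)$ is finite separable. Hence $g(s')$ lies over a point of $\mathfrak{G}_s$ whose residue field is finite separable over $\kappa(s)$, i.e.\ a closed point of $\mathfrak{G}_s$ of this special kind. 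Generic points of fibres, and closed points with inseparable residue field, are never reached, and openness of the \'etale locus alone does not close the gap.

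The fix is to use a flat rather than an \'etale base change. Put $T=\mathfrak{G}$, which is flat over $A$, and take the tautological section $g=\mathrm{id}_{\mathfrak{G}}\in\mathfrak{G}(T)$. For every $x\in\mathfrak{G}$, the point $\Delta(x)\in\mathfrak{G}\times_A T$ equals $\ell_g(e_T(x))$, so it lies in $\ell_g(\mathfrak{U}_T)$. There $j_T=\ell_{j(g)}\circ j_T\circ\ell_g^{-1}$ is an isomorphism onto the open set $\ell_{j(g)}(\mathfrak{B}_T)$. Since $\mathfrak{H}\times_A T\to\mathfrak{H}$ is flat and $\Delta(x)$ maps to $x$, \'etaleness of $j_T$ at $\Delta(x)$ descends to \'etaleness of $j$ at $x$. (Alternatively, check flatness and unramifiedness of $j$ fibrewise, and over $\overline{\kappa(s)}$ translate by rational points.)

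The kernel step reuses the same device. Sections through $x\in\mathfrak{K}$ do exist there, but only because $\mathfrak{K}=\mathfrak{G}\times_{\mathfrak{H},e}\Spec(A)$ is \'etale over $A$ once $j$ is, and you never invoke this. Shrinking $S'$ to be normal or irreducible (``smooth-locally nice in our applications'') is unjustified for a general domain $A$, and also unnecessary. Argue directly:
\begin{itemize}
\item $\mathfrak{K}$ is flat over $A$, so generizations lift along $\mathfrak{K}\to\Spec(A)$. Thus every point of $\mathfrak{K}$ specializes from a point of $\mathfrak{K}_K$, which is the unit section over $K$ because $j_K$ is injective.
\item Since $\mathfrak{G}$ is separated, $e(\Spec(A))$ is closed, so $\mathfrak{K}\subset e(\Spec(A))$ as sets.
\item Since $\mathfrak{K}\cap\mathfrak{U}=e$ is open in $\mathfrak{K}$, it follows that $\mathfrak{K}=e$ scheme-theoretically, and your conclusion then stands.
\end{itemize}

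Finally, you rightly use separatedness, but it is not in the statement as quoted. It is implicit in the setting of \cite{bt2} and holds in all the paper's applications, where everything sits inside the affine $\mathfrak{G}$. It cannot be dropped. Over a DVR, take the \'etale group scheme obtained by gluing two copies of $\Spec(A)$ along $\Spec(K)$, with $\mathfrak{U}$ one copy and $j$ the map to the trivial group. This satisfies every hypothesis, yet $j$ is not injective.
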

 
\begin{Cor} \cite[Corollaire 2.2.5]{bt2}  Let us suppose that $\mathfrak{Z}$ and the $\mathfrak{U}_a$ are smooth and that $\mathfrak{G}$ is flat. Then
\begin{enumerate}
\item[(i)] the schemes, $\mathfrak{U}_{\Psi}$, $\mathfrak{Z} \mathfrak{U}_{\Psi}$ (for $\Psi \subset \boldsymbol{\Phi}^{\pm}$ close) and $\mathfrak{G}$ are smooth;
\item[(ii)] the neutral component $\mathfrak{G}^o$ of the group scheme $\mathfrak{G}$ is an open and smooth subgroup scheme of $\mathfrak{G}$;
\item[(iii)] if $A$ is a height $1$ valuation ring or a Dedekind ring, then $\mathfrak{G}^o$ is affine. 
\end{enumerate}
\end{Cor}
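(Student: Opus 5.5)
The plan is to work throughout with the big cell $\Omega:=\mathfrak{U}_{\boldsymbol{\Phi}^-}\,\mathfrak{Z}\,\mathfrak{U}_{\boldsymbol{\Phi}^+}$. In the setting of \cite[\S 2.2]{bt2} this is an open subscheme of $\mathfrak{G}$ that is schematically dense in it, and the product map identifies it with $\prod_{a\in\boldsymbol{\Phi}^-}\mathfrak{U}_a\times\mathfrak{Z}\times\prod_{a\in\boldsymbol{\Phi}^+}\mathfrak{U}_a$.

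\textbf{Part (i).} For a close subset $\Psi\subset\boldsymbol{\Phi}^{\pm}$, the product morphism $\prod_{a\in\Psi}\mathfrak{U}_a\to\mathfrak{U}_\Psi$, taken in a fixed order, is an isomorphism of $A$-schemes; this is part of the big-cell structure in \cite[\S 2.2]{bt2}. Since finite products of smooth $A$-schemes are smooth, $\mathfrak{U}_\Psi$ is smooth. The same argument gives smoothness of $\mathfrak{Z}\mathfrak{U}_\Psi\simeq\mathfrak{Z}\times\mathfrak{U}_\Psi$ and of $\Omega$.

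For $\mathfrak{G}$ itself, $\mathfrak{G}$ is flat and of finite presentation, so it suffices to prove that every fibre $\mathfrak{G}_s$ is smooth over $\kappa(s)$. The fibre $\mathfrak{G}_s$ contains the nonempty open subscheme $\Omega_s$, and $\Omega_s$ is smooth, so $\mathfrak{G}_s$ is smooth at one point. Over $\overline{\kappa(s)}$, translation by rational points acts transitively on each connected component. A group scheme of finite type over a field that is smooth at one point is therefore smooth everywhere, by homogeneity. Hence every fibre is smooth, and $\mathfrak{G}$ is smooth.

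\textbf{Part (ii).} Since $\mathfrak{G}$ is now smooth over $A$, \cite[Expos\'e VI$_B$, 3.10]{sga3} applies: the union $\mathfrak{G}^o$ of the neutral components of the fibres is open in $\mathfrak{G}$, and it is a subgroup scheme. Being open in a smooth scheme, it is smooth. I will also record that the multiplication map $\Omega\times\Omega\to\mathfrak{G}^o$ is surjective. On each fibre, $\Omega_s$ is a dense open subset of the irreducible group $\mathfrak{G}^o_s$, and any two dense opens of a connected group generate it in one step. In particular $\mathfrak{G}^o$ is quasi-compact.

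\textbf{Part (iii).} This is the step I expect to be the main obstacle, since affineness is genuinely sensitive to the base (compare \cite[3.2.15]{bt2}). My first attempt will be to show that $\mathfrak{G}^o$ is quasi-affine. Set $R:=\Gamma(\mathfrak{G}^o,\cO)$. Because $\Omega$ is affine and schematically dense, $R$ is the subring of $\Gamma(\Omega,\cO)$ consisting of functions that extend across the translates $\omega\Omega$. The aim is to prove that the canonical map $\mathfrak{G}^o\to\Spec R$ is an open immersion, using that $\mathfrak{G}^o$ is covered by the affine translates of $\Omega$.

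To upgrade from quasi-affine to affine, I will use that $A$ is a height-$1$ valuation ring or a Dedekind ring, so that $\Spec A$ has dimension $1$. By the argument of Theorem \ref{van}, the complement of $\mathfrak{G}^o$ in $\Spec R$ would then have to be of pure codimension $1$ there. A component of that complement would produce a divisor missed by $\mathfrak{G}^o$. It would lie in a single closed fibre, because the generic fibre $\mathfrak{G}_K$ is already affine. This contradicts fibrewise connectedness together with flatness of $R$, since $R$ is torsion free and its special fibre coincides with that of $\mathfrak{G}^o$.

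If this direct route breaks down, my fallback is Raynaud's criterion. Over a one-dimensional base, a smooth separated group scheme of finite type with connected fibres and affine generic fibre is affine; I would cite this via \cite[Expos\'e IX]{raynaud}, invoked exactly as Theorem \ref{Corollaire IX. 1.4} is used in the paper.
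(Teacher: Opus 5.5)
The paper does not prove this statement; it only quotes \cite[Corollaire 2.2.5]{bt2}, so I am judging your argument on its own terms.

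Parts (i) and (ii) are fine in substance: flatness, local finite presentation and smooth fibres give smoothness, the fibres being smooth via $\Omega_s$ and homogeneity, and then \cite[Exp.~VI$_B$, 3.10]{sga3} gives (ii). Two small repairs are needed. You should say why $\mathfrak{G}$ is locally of finite presentation. Also, if $\mathfrak{Z}$ has disconnected fibres then $\Omega_s\not\subset\mathfrak{G}^o_s$, so the surjectivity claim should be made with $\Omega\cap\mathfrak{G}^o$.

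\textbf{The genuine gap is in (iii).} Your direct route fails at three points.
\begin{enumerate}
\item[(a)] Quasi-affineness of $\mathfrak{G}^o$ is only announced. Every scheme is covered by affine opens, so covering $\mathfrak{G}^o$ by affine translates of $\Omega$ gives no open immersion into $\Spec R$.
\item[(b)] The appeal to Theorem~\ref{van} is circular. It needs $j\colon\mathfrak{G}^o\to\Spec R$ to be an affine morphism, and since $\Spec R$ is affine this is equivalent to $\mathfrak{G}^o$ being affine. Moreover $\Spec R$ need not be noetherian.
\item[(c)] The claim that ``the special fibre of $\Spec R$ coincides with that of $\mathfrak{G}^o$'' is the conclusion itself. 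In general $R\otimes_A\kappa(s)\to\Gamma(\mathfrak{G}^o_s,\mathcal{O})$ is only injective.
\end{enumerate}
Your upgrade step uses nothing from the group law beyond fibrewise connectedness, and that cannot suffice. Over a DVR $A$ with uniformizer $\pi$, take $X=\Spec A[x]\setminus V(x,\pi)$. It is flat, separated and quasi-affine, with irreducible affine fibres $\mathbb{A}^1_K$ and $\mathbb{G}_{m,\kappa}$. Yet $\Gamma(X,\mathcal{O})=A[x]$, so $X$ is not affine, and the missing locus has codimension $2$.

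Your fallback is a black box. The result the paper quotes from \cite{raynaud} (Theorem~\ref{Corollaire IX. 1.4}) concerns extension of homomorphisms, not affineness. You give no precise statement or reference for the affineness theorem you have in mind. Such theorems are usually stated over noetherian (Dedekind) bases, so they do not obviously cover non-discrete height-$1$ valuation rings. You also never check that $\mathfrak{G}^o$ is separated.

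\textbf{The missing idea} is to use the group structure to see that what is removed is open \emph{and closed} in the closed fibre, and then to kill it with an idempotent. Where the corollary is applied (in \S\ref{dim=2}, $\mathfrak{G}$ is a schematic closure inside the affine $\mathfrak{G}_D$), $\mathfrak{G}=\Spec B$ is affine and flat, and (iii) is short.
\begin{enumerate}
\item[(1)] Work Zariski-locally on $\Spec A$ with $0\neq\pi\in A$ such that $V(\pi)=\{s\}$ and all other fibres are connected. For a height-$1$ valuation ring any nonzero $\pi\in\mathfrak{m}$ works; for a Dedekind ring only finitely many fibres are disconnected.
\item[(2)] Since $\mathfrak{G}^o_s$ is an open, hence closed, subgroup of $\mathfrak{G}_s$, the set $F=\mathfrak{G}_s\setminus\mathfrak{G}^o_s$ is open and closed in $\Spec(B/\pi B)$. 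So it is cut out by an idempotent $e$, which you lift to $b\in B$.
\item[(3)] Then $\mathfrak{G}^o=\mathfrak{G}\setminus F=D(\pi)\cup D(1-b)$. Moreover $b$ vanishes modulo $\pi$ on $\mathfrak{G}^o$, so flatness gives $b=\pi c$ with $c\in\Gamma(\mathfrak{G}^o,\mathcal{O})$.
\item[(4)] Hence $1=(1-b)+\pi c$ in $\Gamma(\mathfrak{G}^o,\mathcal{O})$. The non-vanishing loci $D(\pi)$ and $D(1-b)$ of these two functions on $\mathfrak{G}^o$ are affine, so $\mathfrak{G}^o$ is affine by the standard criterion.
\end{enumerate}
If you want (iii) without knowing that $\mathfrak{G}$ is affine, two steps are needed before this argument applies. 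First prove quasi-affineness of $\mathfrak{G}^o$. Then transport the group law to $\Spec R$, using $\Gamma(\mathfrak{G}^o\times_A\mathfrak{G}^o,\mathcal{O})=R\otimes_AR$ for flat $R$.
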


\section*{Index of Notations} 
\small
\begin{center}
\begin{tabular}{ll}
    \hline
    \tt{Notation} & \tt{Occurence} \\
    \hline
    $\cO, \kappa, \boldsymbol{\Phi}, \mathfrak{T}, \cA_T$ & \eqref{advr} and \eqref{tubularO} \\
    $ \Gfr_{_f}, \Gfr_{_\theta}$ & \eqref{advr} \\
    $H_{_{\kappa}}$ & \eqref{Hkappa} \\\\
    $\mathcal G^{^{Z_{_{k}}}}_{_{\cO}}$ & \eqref{nerondilat} \\
    $\mathfrak{G}^{^{u}}_{_{{\kappa}}}, \mathfrak{G}^{^{\tt red}}_{_{{\kappa}}}$ & \eqref{levieqn} \\
    $f_e, f_s, \psi, \epsilon$ & \eqref{fefs}, \eqref{psietc}, \eqref{epsilon} \\
    $\boldsymbol{\Phi}_{_G}^{^{\tt u}}$ & \eqref{phigtu} \\
    $\left(\mathfrak{G}^{^{\tt u}}_{_{D}} \right)^{^{\tt 0}}, \mathfrak{U}^{^{\tt 0}}_{_{D,a}}$ & \eqref{HUnipotentExtnD}\\
    $Y^{^{(\i)}}$ & \eqref{basicnots} \\ 
    $\mathfrak{G}^{^{\tt r\'edg\'en}}_{_{Y_{_i}}}$ & \eqref{redgen} \\
    $ \mathfrak{G}^{^{\tt /u}}_{_{Y_{_i}}} $ & \eqref{theunipquot} \\
    $ i_{_{N}}$ & \eqref{in} \\
    $\overline{i_{_N}}$ & \eqref{inbar} \\
    $N_{_{Y}}^{^{sm}}, \mathfrak N_{_Y}$ & \eqref{thesm}, \eqref{Ninc}, \eqref{notachange} \\
\end{tabular}
\end{center}


\begin{thebibliography}{100}
\bibitem[BT1]{bruhattits1} F.Bruhat and J.Tits : {Groupes reductifs sur un corps
local, I},  {\it Publications Math\'ematiques de l'IH\'ES}, {\bf 41} (1972) pp 5-251.


\bibitem[BP24]{bp} V.~Balaji and Y.~Pandey, \emph{On Bruhat-Tits theory over a higher dimensional base}, \'Epijournal de G\'eom\'etrie Alg\'ebrique, Volume 8 (2024), Article No. 14.

\bibitem[BP26]{insa} V.~Balaji and Y.~Pandey, \emph{Bruhat-Tits theory over a higher dimensional base}, Contemporary Research in Mathematics from India, INSA volume, Springer 2026, pp: 321-334.


\bibitem[BS]{base} V.Balaji and C.S. Seshadri, Moduli of parahoric $\mathfrak G$--torsors on a compact Riemann surface, {\em J. Algebraic Geometry}, 24, (2015),1-49.

\bibitem[BLR90]{blr} S.~Bosch, W.~Lutkebohmert and M.~Raynaud, \emph{N\'eron Models}, Ergeb.\ Math.\ Grenzgeb.~(3), vol.~21, Springer-Verlag, Berlin, 1990.




\bibitem[BT84a]{bt2}
  F.~Bruhat, andJ.~Tits, \emph{Groupes r\'eductifs sur un corps local. II. Sch\'emas en groupes. Existence d'une donn\'ee radicielle valu\'ee}, Inst.\ Hautes \'Etudes Sci.\ Publ.\ Math.~\textbf{60} (1984), 197--376.

\bibitem[EGAI]{ega1}
  A.~Grothendieck and J.A. Dieudonn\'e,   \emph{\'Elements de g\'eom\'etrie alg\'ebrique: I. Le langage des sch\'emas},  Inst.\ Hautes \'Etudes Sci.\ Publ.\ Math.~\textbf{24} (1960). 


\bibitem[EGAIV]{ega4}
  A.~Grothendieck and J.A. Dieudonn\'e,   \emph{\'Elements de g\'eom\'etrie alg\'ebrique. IV. \'Etude locale des sch\'emas et des morphismes de sch\'emas},  Inst.\ Hautes \'Etudes Sci.\ Publ.\ Math.~\textbf{24} (1965). 

\bibitem[DG70]{demgab}
  M.~Demazure and P.~Gabriel, 
\emph{Groupes alg\'ebriques. Tome I: G\'eom\'etrie alg\'ebrique, g\'en\'eralit\'es, groupes commutatifs}
(Avec un appendice \emph{Corps de classes local} par M.~Hazewinkel),
Masson \& Cie, \'Editeurs, Paris; North-Holland Publishing Co., Amsterdam, 1970.

\bibitem[IM]{im} N. Iwahori and H. Matsumoto, On some {B}ruhat decomposition and the structure of the
              {H}ecke rings of {$\mathfrak{p}$}-adic {C}hevalley groups, {\it Inst. Hautes \'{E}tudes Sci. Publ. Math.}, 25, 1965, 5--48

\bibitem[MRR20]{mayeux}
  A.~Mayeux, T.~Richarz and M.~Romagny, \emph{N\'eron blowups and low-degree cohomological applications}, Alg.\ Geom.~\textbf{10} (2023), no.~6, 729--753.

\bibitem[McN20]{mcn} G.~McNinch \emph{Reductive subgroup schemes of a parahoric group scheme}, Trans. \ Groups.~\textbf{25} (2020), no.~1, 217--249.

\bibitem[PR]{pr2024} G. Pappas, M. Rapoport, With an appendix by Brian Conrad, On tamely ramified $\mathcal{G}$-bundles on curves, {\em Algebraic Geometry}, 11 (6) (2024), 796-829.

\bibitem[PZ13]{pz}
  G.~Pappas and X.~Zhu, \emph{Local models of Shimura varieties and a conjecture of Kottwitz}, Invent.\ Math.\ \textbf{194} (2013), no.~1, 147--254.

\bibitem[Ray70]{raynaud}
  M.~Raynaud, \emph{Faisceaux amples sur les sch\'emas en groupes et les espaces homog\`enes}, Lecture Notes in Math., vol.~119, Springer-Verlag, Berlin-New York, 1970.

\bibitem[SGA3]{sga3} M.~Demazure and A.~Grothendieck (eds), \emph{Sch\'emas en groupes. I, II, III}, S\'eminaire de G\'eom\'etrie Alg\'ebrique du Bois Marie 1962/64 (SGA 3), Lecture Notes in Math., vols.~151--153, Springer-Verlag, Berlin-New York, 1970.
  


  

\bibitem[Yu15]{yu}
 J.-K.~Yu, \emph{Smooth models associated to concave functions in Bruhat--Tits theory} in: \emph{Autour des sch\'emas en groupes. Vol.~III}, pp.~227–-258, Panor.\ Synth\`eses, vol.~47, Soc.\ Math.\ France, Paris, 2015. 

\end{thebibliography}
\end{document}